\newtheorem{theo}{Théorème}[section]
\newtheorem*{theo*}{Théorème}
\newtheorem*{prop*}{Proposition}
\newtheorem{prop}[theo]{Proposition}
\newtheorem{lemme}[theo]{Lemme}
\newtheorem*{definition}{Définition}
\newtheorem{corollaire}[theo]{Corollaire}
\newtheorem{sous-lemme}[theo]{Sous-lemme}
\newcommand{\Sph}{\mathbb{S}}
\newcommand{\G}{\mathcal{G}}
\newcommand{\E}{\mathcal{E}}
\newcommand{\C}{\mathcal{C}}
\newcommand{\R}{\mathbb{R}}
\newcommand{\N}{\mathbb{N}}
\newcommand{\Z}{\mathbb{Z}}
\newlength{\larg}
\begin{document}
\title{Hyperbolicité du graphe des rayons et quasi-morphismes sur un gros groupe modulaire}
\author{Juliette Bavard\footnote{Soutenue par l'attribution d'une allocation doctorale Région \^Ile-de-France.}}
\maketitle

\selectlanguage{english}
\begin{abstract} The mapping class group $\Gamma$ of the complement of a Cantor set in the plane arises naturally in dynamics. We show that the ray graph, which is the analog of the complex of curves for this surface of infinite type, has infinite diameter and is hyperbolic. We use the action of $\Gamma$ on this graph to find an explicit non trivial quasimorphism on $\Gamma$ and to show that this group has infinite dimensional second bounded cohomology. Finally we give an example of a hyperbolic element of $\Gamma$ with vanishing stable commutator length. This carries out a program proposed by Danny Calegari.
\end{abstract}

\selectlanguage{francais}
\begin{abstract} Le groupe modulaire $\Gamma$ du plan privé d'un ensemble de Cantor apparaît naturellement en dynamique. On montre ici que le graphe des rayons, analogue du complexe des courbes pour cette surface de type infini, est de diamètre infini et hyperbolique. On utilise l'action de $\Gamma$ sur ce graphe hyperbolique pour exhiber un quasi-morphisme non trivial explicite sur $\Gamma$ et pour montrer que le deuxième groupe de cohomologie bornée de $\Gamma$ est de dimension infinie. On donne enfin un exemple d'un élément hyperbolique de $\Gamma$ dont la longueur stable des commutateurs est nulle. Ceci réalise un programme proposé par Danny Calegari. \end{abstract}

\newpage

\tableofcontents

\newpage

\section{Introduction}
\subsection{\og Gros \fg \ groupes modulaires et dynamique}

Lorsque $S$ est une surface orientable de type fini ou infini, c'est-à-dire une variété compacte orientable de dimension $2$ éventuellement privée d'un nombre fini ou infini de points, le \emph{groupe modulaire} de $S$, noté $MCG(S)$ pour \og Mapping Class Group\fg, est le groupe des classes d'isotopies d'homéomorphismes de $S$ préservant l'orientation.
Si l'on connait aujourd'hui de nombreuses caractéristiques des groupes modulaires des surfaces compactes privées d'un nombre fini de points, les groupes modulaires des surfaces de type infini sont beaucoup moins étudiés. Pourtant, comme le souligne Danny Calegari sur son blog \og Big mapping class groups and dynamics \fg (voir \cite{blog-Calegari}), ces \og gros \fg \  groupes modulaires apparaissent naturellement en dynamique, en particulier à travers la construction suivante.

On note $Homeo^+(\R^2)$ le groupe des homéomorphismes du plan préservant l'orientation. Soit $G$ un sous-groupe de $Homeo^+(\R^2)$. Si l'orbite $G.p$ d'un point $p \in \R^2$ est bornée, alors il existe un morphisme de $G$ vers $MCG(\R^2-K)$, où $K$ est soit un ensemble fini, soit un ensemble de Cantor.

En effet, la réunion $\tilde K$ de l'adhérence de l'orbite $G.p$ avec l'ensemble des composantes connexes bornées de son complémentaire est un ensemble compact, invariant par $G$ et de complémentaire connexe. Le groupe $G$ agit sur le quotient du plan obtenu en \og écrasant \fg \ chacune des composantes connexes de $\tilde K$ sur un point (un point par composante), qui est encore homéomorphe au plan. L'image de $\tilde K$ au quotient est un sous-ensemble $K$ du plan, totalement discontinu. Quitte à remplacer $K$ par l'un de ses sous-ensembles bien choisi, on peut supposer que $K$ est un ensemble minimal, c'est-à-dire tel que toute orbite $G.q$ avec $q\in K$ est dense dans $K$. Comme $K$ est compact, on sait alors que c'est soit un ensemble fini, soit un ensemble de Cantor. On obtient par cette construction un morphisme de $G$ vers $MCG(\R^2-K)$.

Le groupe modulaire de $\R^2$ privé d'un nombre fini de points, qui a pour sous-groupe d'indice fini le quotient d'un groupe de tresses par son centre, a été très étudié. C'est le cas où $K$ est un ensemble de Cantor qui va nous intéresser ici. On notera : $$\Gamma:=MCG(\R^2-Cantor).$$

Dans \cite{Calegari-Circular}, Calegari montre qu'il existe un morphisme injectif de $\Gamma$ dans $Homeo^+(\Sph^1)$. C'est en particulier la première étape pour montrer qu'un sous-groupe de difféomorphismes du plan préservant l'orientation et ayant une orbite bornée est circulairement ordonnable. Dans le but d'établir de nouvelles propriétés sur le groupe $\Gamma$, on réalise ici un programme proposé par Calegari dans \cite{blog-Calegari}.

\subsection{Graphe des rayons}
Un objet central dans l'étude des groupes modulaires des surfaces de type fini est \textit{le complexe des courbes}, un complexe simplicial associé à chaque surface, dont les simplexes sont les ensembles de classes d'isotopies de courbes simples essentielles sur la surface qui peuvent être réalisées par des représentants disjoints. L'hyperbolicité de ce complexe, établie par Howard Masur et Yair Minsky (voir \cite{Masur-Minsky}), a permis de grandes avancées dans l'étude de ces groupes. Dans le cas du groupe $\Gamma$ que l'on considère, le complexe des courbes de $\R^2$ privé d'un ensemble de Cantor n'est pas intéressant du point de vue de la géométrie à grande échelle introduite par Gromov, car il est de diamètre $2$. Danny Calegari propose de remplacer ce complexe par le \emph{graphe des rayons}, qu'il définit de la manière suivante (voir figure \ref{figu:graphe-rayons} pour des exemples de rayons) :

\begin{definition} [Calegari]
Le \emph{graphe des rayons} est le graphe dont les sommets sont les classes d'isotopies des arcs simples joignant l'infini à un point de l'ensemble de Cantor $K$ et d'intérieur inclus dans le complémentaire de $K$, appelés \emph{rayons}, et dont les arêtes sont les paires de tels rayons qui ont des représentants disjoints.
\end{definition}

%figure
\pgfdeclareimage[interpolate=true,height=5cm]{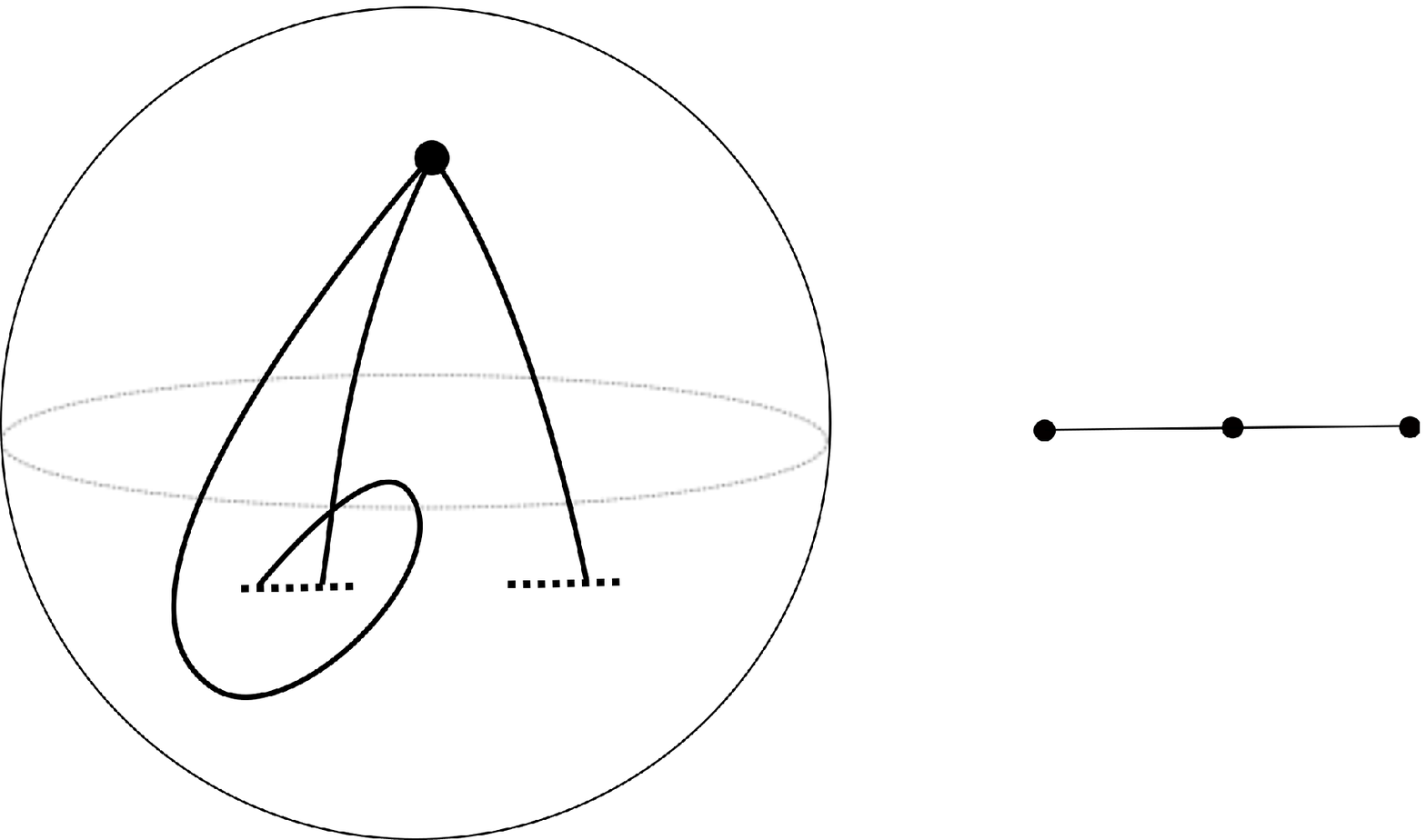}{graphe-rayons}
\begin{figure}[!h]
\labellist
\pinlabel $\alpha$ at 95 190
\pinlabel $\beta$ at 148 190
\pinlabel $\gamma$ at 202 190
\pinlabel $\alpha$ at 392 137
\pinlabel $\gamma$ at 458 136
\pinlabel $\beta$ at 526 134
\pinlabel $\infty$ at 162 270
\endlabellist
\centering
\includegraphics[scale=0.5]{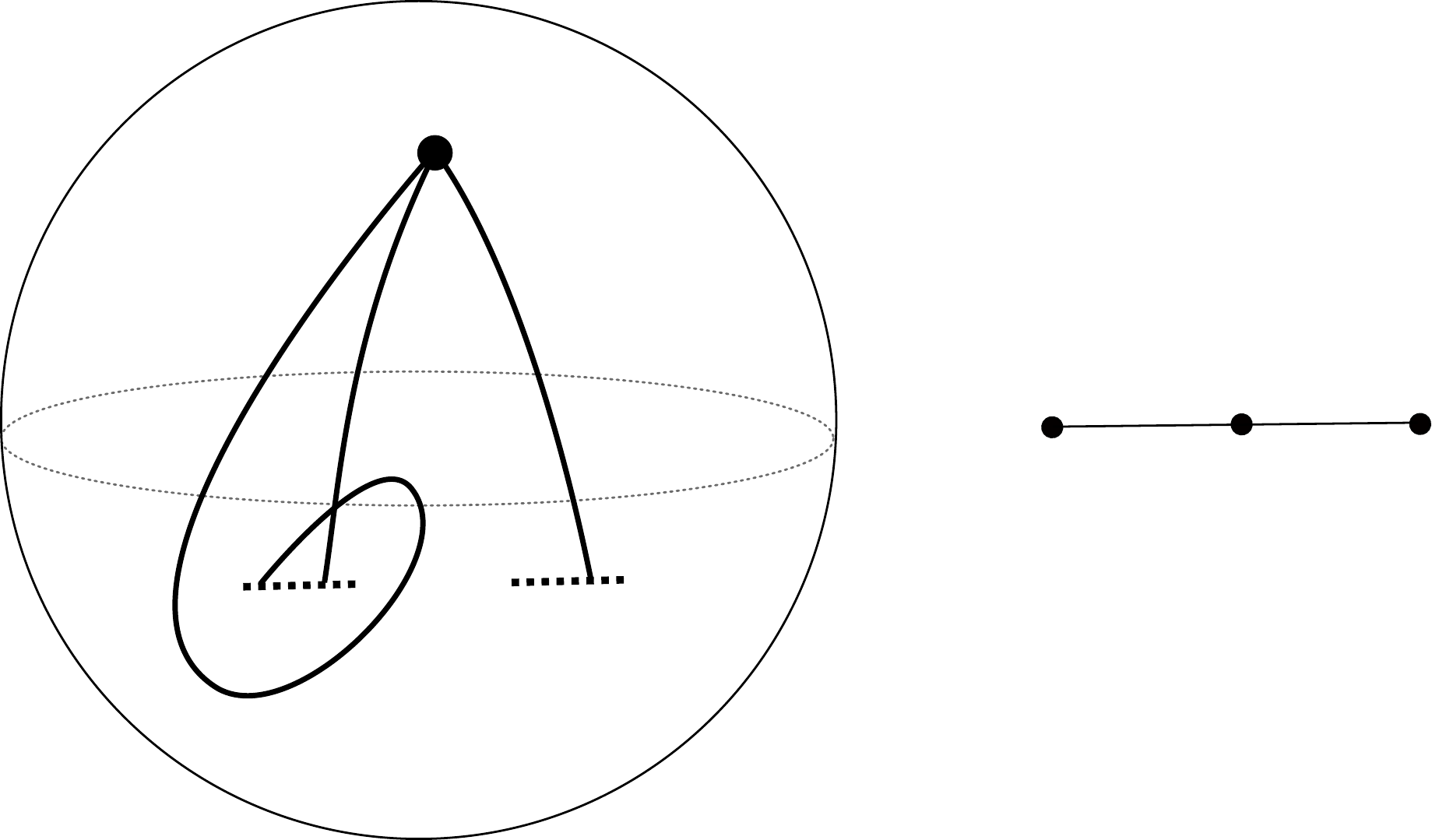}
\caption{Exemple de trois rayons représentés sur la sphère et du sous-graphe du graphe des rayons associé : $d(\alpha,\beta)=2$ et $d(\alpha,\gamma)=d(\beta,\gamma)=1$.}
\label{figu:graphe-rayons}
\end{figure}

On montre ici les résultats suivants :

\begin{theo*}[\ref{theo-infini}]
Le diamètre du graphe des rayons est infini.
\end{theo*}

\begin{theo*}[\ref{theo-hyperbolique}]
Le graphe des rayons est hyperbolique au sens de Gromov.
\end{theo*}

\begin{theo*}[\ref{theo-halphak}] Il existe un élément $h\in \Gamma$ agissant par translation sur un axe géodésique du graphe des rayons.
\end{theo*}

Ces résultats nous permettent de voir $\Gamma$ comme agissant non trivialement sur un espace hyperbolique. On cherche ensuite à utiliser cette action pour construire des \og quasi-morphismes non triviaux\fg \ sur $\Gamma$.

\subsection{Quasi-morphismes et cohomologie bornée}

Un \emph{quasi-morphisme} sur un groupe $G$ est une application $q:G\rightarrow \R$ telle qu'il existe une constante $D_q$, appelée \emph{défaut} du quasi-morphisme $q$, vérifiant pour tous $a,b \in G$ l'inégalité : $$|q(ab)-q(a)-q(b)|\leq D_q.$$
Les premiers exemples de quasi-morphismes sont les morphismes et les fonctions bornées. Ce sont des quasi-morphismes \og triviaux \fg : on dit qu'un quasi-morphisme $q$ est \emph{non trivial} si le quasi-morphisme $\tilde q$ défini par $\tilde q(a) = \lim_{n\rightarrow \infty} {q(a^n)\over n}$ pour tout $a\in G$ n'est pas un morphisme.

L'espace des quasi-morphismes non triviaux sur un groupe $G$, que l'on notera $\tilde Q(G)$, coïncide avec le noyau du morphisme naturel envoyant le deuxième groupe de cohomologie bornée $H^2_b(G;\R)$ de $G$ dans le deuxième groupe de cohomologie $H^2(G;\R)$ de $G$ (voir par exemple \cite{Barge-Ghys, Ghys-Groups} pour des précisions sur la cohomologie bornée des groupes). L'étude de cet espace $\tilde Q(G)$ donne des informations sur le groupe $G$ : par exemple, on sait qu'il est trivial lorsque $G$ est moyennable (voir \cite{Gromov}), ou lorsque $G$ est un réseau cocompact irréductible d'un groupe de Lie semi-simple de rang supérieur strictement à $1$ (voir \cite{Burger-Monod}).
Dans \cite{Bestvina-Fujiwara}, Mladen Bestvina et Koji Fujiwara ont montré que l'espace des quasi-morphismes non triviaux sur un groupe modulaire d'une surface de type fini est de dimension infinie, ce qui a de nombreuses conséquences et implique notamment que pour de nombreuses classes de groupes $G$, tout morphisme de $G$ vers un groupe modulaire de surface de type fini se factorise par un groupe fini.
  Ces résultats, ainsi que les applications potentielles en dynamique, motivent la recherche de quasi-morphismes non triviaux sur le groupe $MCG(\R^2-Cantor)$ proposée par Calegari \cite{blog-Calegari}. On montre ici le résultat suivant :
\begin{theo*}[\ref{dim infinie}] L'espace $\tilde Q(\Gamma)$ des quasi-morphismes non triviaux sur $\Gamma$ est de dimension infinie.
\end{theo*}

Ce résultat implique en particulier que la \og longueur stable des commutateurs \fg \ est une quantité non bornée sur $\Gamma$.

\subsection{Longueur stable des commutateurs}

Si $G$ est un groupe, on note $[G,G]$ son groupe dérivé, c'est-à-dire le sous-groupe de $G$ engendré par les commutateurs (éléments qui s'écrivent sous la forme $[x,y]=xyx^{-1}y^{-1}$ avec $x,y\in G$). Pour tout $a \in [G,G]$, on note $cl(a)$ la \emph{longueur des commutateurs} de $a$, c'est-à-dire le plus petit nombre de commutateurs dont le produit est égal à $a$. On définit alors la \emph{longueur stable des commutateurs} de $a$ par :
$$scl(a):=\lim_{n\rightarrow +\infty} {cl(a^n)\over n}.$$
C'est en particulier une quantité invariante par conjugaison (voir \cite{SCL} pour des précisions sur la longueur stable des commutateurs). L'étude de cette quantité est reliée à celle des quasi-morphismes non triviaux par un théorème de dualité : Christophe Bavard a montré que l'espace des quasi-morphismes non triviaux sur un groupe $G$ est trivial si et seulement si tous les éléments de $[G,G]$ sont de $scl$ nulle (voir \cite{CB}).

Dans le cas du groupe $\Gamma$, Calegari a montré dans \cite{blog-Calegari} que si $g\in \Gamma$ a une orbite bornée sur le graphe des rayons, alors $scl(g)=0$. Cette propriété rend encore plus surprenante l'existence d'un espace de quasi-morphismes non triviaux de dimension infinie sur $\Gamma$. De plus, elle distingue $\Gamma$ des groupes modulaires des surfaces de type fini, dont certains éléments ont une orbite bornée sur le complexe des courbes et une $scl$ non nulle : en effet, Endo-Kotschick \cite{Endo-Kotschick} et Korkmaz \cite{Korkmaz-scl} ont montré que les twists de Dehn sont de $scl$ strictement positive. Dans le cas des surfaces de type fini, on sait maintenant caractériser précisément en termes de la décomposition de Nielsen-Thurston les éléments de $scl$ nulle (voir Bestvina-Bromberg-Fujiwara \cite{Bestvina-Bromberg-Fujiwara}). Dans le cas de $\Gamma$, on peut s'interroger sur une éventuelle réciproque à la proposition de Calegari : est-ce que tous les éléments de $scl$ nulle ont une orbite bornée sur le graphe des rayons ? On exhibe ici un élément hyperbolique de $\Gamma$ de $scl$ nulle (proposition \ref{ex-scl_nulle-hyp}), montrant ainsi qu'une éventuelle caractérisation des éléments de $scl$ nulle serait plus fine que la classification entre éléments ayant ou non une orbite bornée.

\subsection{Stratégies de preuves}
  
  Dans la section \ref{section1}, on construit une suite de rayons  $(\alpha_k)_k$ qui est non bornée dans le graphe des rayons, montrant ainsi que le graphe des rayons est de diamètre infini.

%figure
\pgfdeclareimage[interpolate=true,height=5cm]{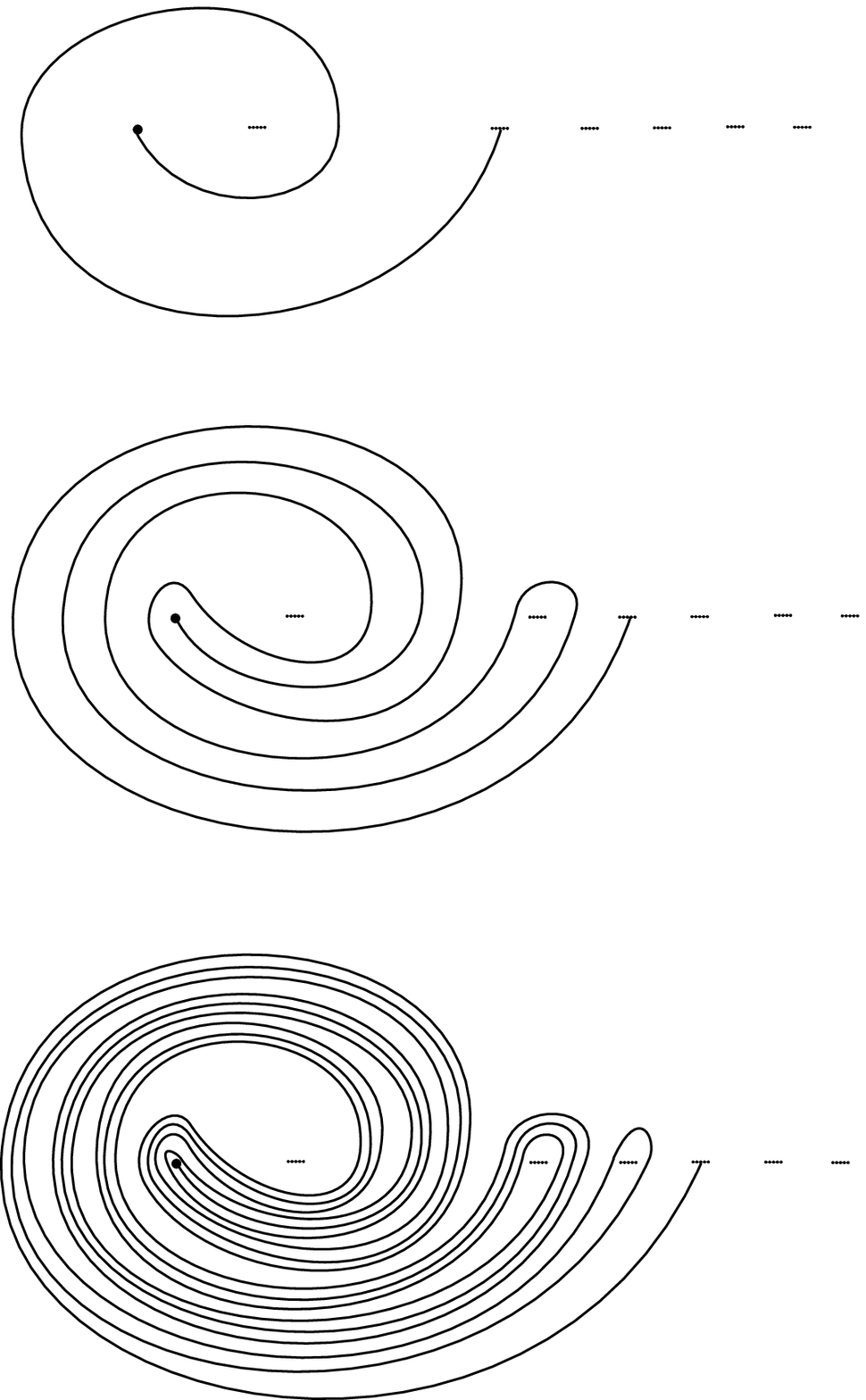}{tube-intro}
\begin{figure}[!h]
\labellist
\pinlabel $\infty$ at 49 458
\pinlabel $a_1$ at 176 416
\pinlabel $a_2$ at 212 227
\pinlabel $a_3$ at 223 24
\endlabellist
\centering
\includegraphics[scale=0.5]{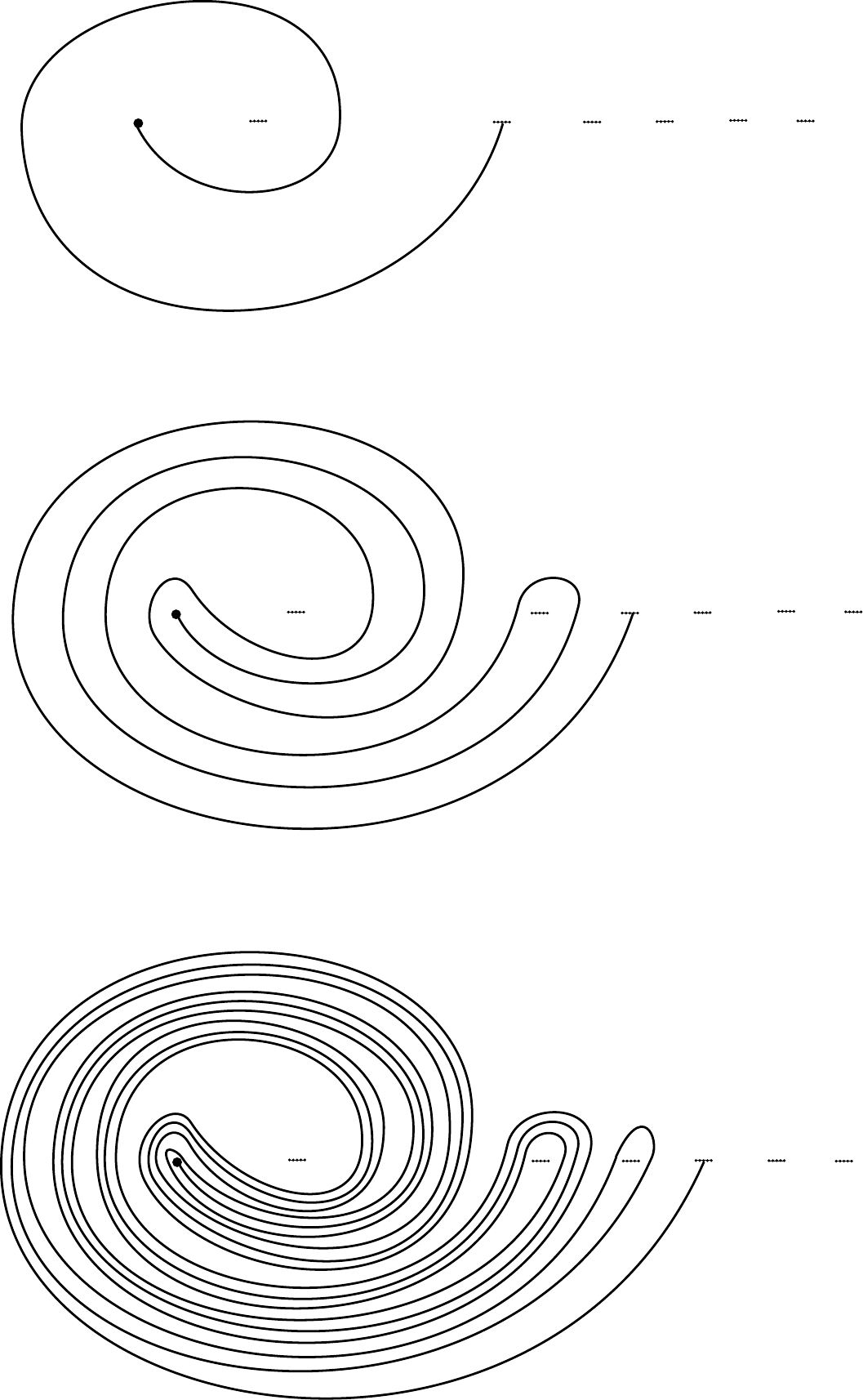}
\caption{Construction de $a_2$ à partir de $a_1$ et de $a_3$ à partir de $a_2$.}
\label{figu:tube}
\end{figure}

Cette suite est contruite par récurrence à partir de l'idée suivante : si l'on considère un arc $a_1$ représentant un rayon et un arc $a_2$ formant un \og tube \fg \ dans un petit voisinage autour de $a_1$ (comme sur la figure \ref{figu:tube}), tout arc disjoint de $a_2$ et représentant un rayon doit commencer en l'infini et finir en un point de l'ensemble de Cantor sans traverser $a_2$. Un tel arc doit alors \og suivre le parcours de l'arc $a_1$ \fg \ avant de pouvoir éventuellement s'échapper du tube formé par $a_2$ et rejoindre un point de l'ensemble de Cantor. Si maintenant $a_3$ est un arc représentant un rayon et qui forme un tube dans un voisinage autour de $a_2$ (voir la figure \ref{figu:tube}), le même phénomène se produit : tout arc disjoint de $a_3$ doit \og suivre le parcours de l'arc $a_2$ \fg \ avant de pouvoir s'échapper du tube formé par $a_3$ pour rejoindre un point de l'ensemble de Cantor. Ainsi, dans le graphe des rayons, tout rayon à distance $1$ du rayon représenté par $a_3$ commence par suivre le parcours de $a_2$, ce qui force tout rayon à distance $2$ de $a_2$ à suivre le parcours de $a_1$ : si $\beta$ est par exemple le rayon représenté par un arc qui joint l'infini au point d'attachement de $a_1$ en restant dans l'hémisphère nord, alors $\beta$ est à distance supérieure à $3$ de $a_3$. En effet, tout arc qui commence par parcourir $a_2$ ou $a_1$ n'est pas homotopiquement disjoint de $\beta$, donc tous les représentants des rayons à distance $1$ ou $2$ du rayon représenté par $a_3$ intersectent tout arc homotope à $\beta$. 

On choisit ensuite $a_4$ qui dessine un tube autour de $a_3$ : tout rayon à distance $1$ du rayon représenté par $a_4$ commence par suivre le parcours de $a_3$ ; ce qui implique que tout rayon à distance $2$ de $a_4$ commence par suivre le parcours de $a_2$ ; ce qui implique que tout rayon à distance $3$ de $a_4$ commence par suivre le parcours de $a_1$ ; ce qui implique que le rayon représenté par $a_4$ est à distance supérieure à $4$ du rayon $\beta$.

On peut continuer ainsi en choisissant $a_5$ qui forme un tube autour de $a_4$, etc. Pour tout $k$, on obtient un rayon $\alpha_k$ représenté par $a_k$ et tel que tout arc représentant un rayon à distance strictement inférieure à $k$ de $\alpha_k$ commence par suivre le parcours de $a_1$, coupant ainsi $\beta$. Pour rendre cette idée rigoureuse, on définit dans la section \ref{section1} un codage pour certains rayons, puis la suite $(\alpha_k)_{k\in \N}$ de rayons représentant les \og tubes\fg \ voulus. On montre grâce au codage que cette suite et non bornée dans le graphe des rayons (théorème \ref{theo-infini}), et qu'elle forme un demi-axe géodésique dans ce graphe (proposition \ref{alpha_k geod}).\\

Dans la section \ref{section2}, on montre que le graphe des rayons est hyperbolique au sens de Gromov (théorème \ref{theo-hyperbolique}). On définit pour cela un graphe annexe $X_\infty$ dont les sommets sont les classes d'homotopies de lacets simples de $\Sph^2 - K$ basés en l'infini, et dont les arêtes sont les paires de tels lacets ayant des représentants disjoints. On montre que ce graphe est hyperbolique en adaptant la preuve de l'uniforme hyperbolicité des complexes des arcs par les chemins \og unicornes\fg \ de Sebastian Hensel, Piotr Przytycki et Richard Webb \cite{HPW}. On montre ensuite que ce graphe $X_\infty$ est quasi-isométrique au graphe des rayons, ce qui permet d'établir l'hyperbolicité de ce dernier. On définit pour cela une application $f$ entre le graphe des rayons $X_r$ et le graphe hyperbolique $X_\infty$, qui à tout rayon $x$ de $X_r$ associe un élément $\hat x$ de $X_\infty$ tel que $x$ et $\hat x$ ont des représentants disjoints, puis on montre que cette application est une quasi-isométrie.\\

Dans la section \ref{section-qm}, on utilise à nouveau la suite de rayons $(\alpha_k)_k$ construite dans la section \ref{section1}, qui définit un axe géodésique du graphe des rayons. On exhibe un élément $h \in \Gamma$ qui agit par translation sur cet axe (théorème \ref{theo-halphak}). L'élément $h$ est un élément pouvant être représenté par la tresse de la figure \ref{figu-h}. Les points noirs représentent l'ensemble de Cantor $K$, et chaque brin transporte tous les points du sous-ensemble de Cantor correspondant. On montre que pour tout $k\in \N$, $h(\alpha_k)=\alpha_{k+1}$.

%figure
\begin{figure}[!h]
\labellist
\small\hair 2pt
\pinlabel $\infty$ at 211 204
\pinlabel $h$ at 465 110
\endlabellist
\centering
\vspace{0.2cm}
\includegraphics[scale=0.5]{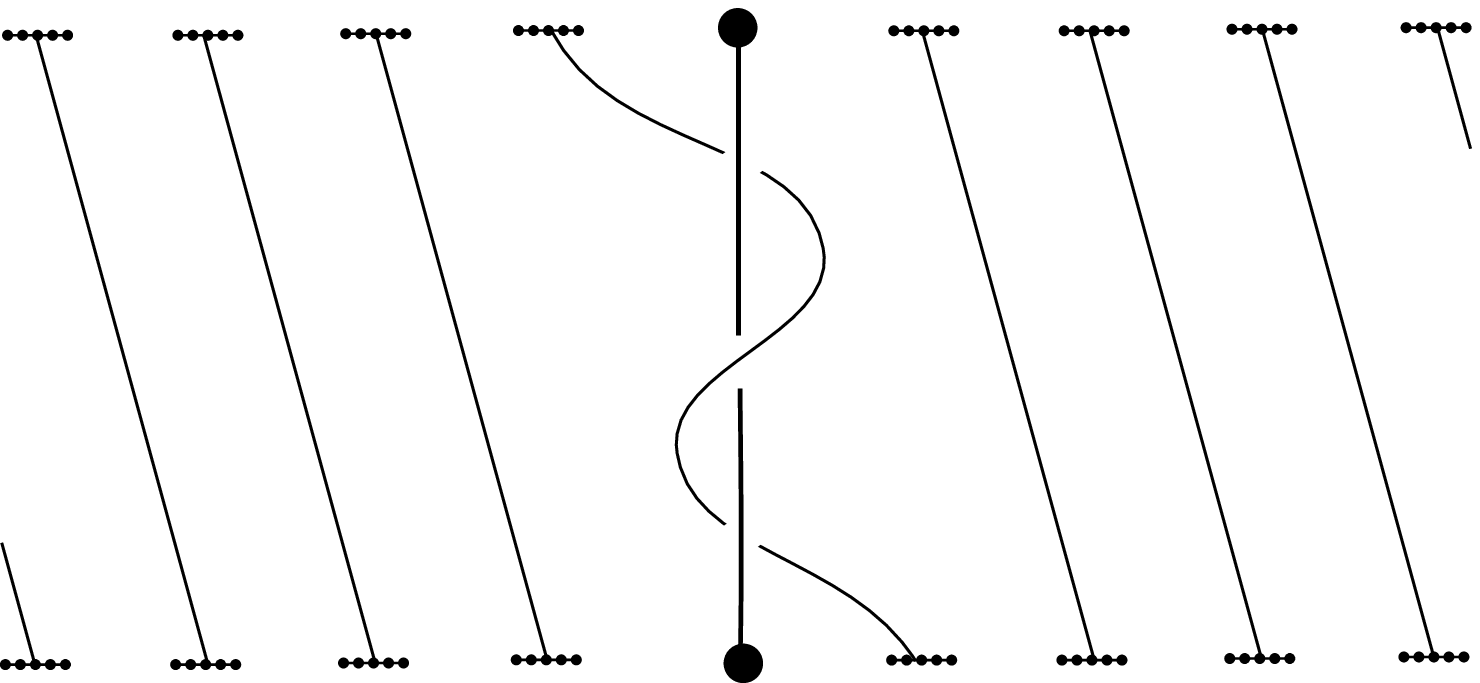}
\caption{Représentation de l'élément $h \in \Gamma$.}
\label{figu-h}
\end{figure}

On cherche ensuite à construire des quasi-morphismes non triviaux sur $\Gamma$. Dans \cite{Fujiwara}, Koji Fujiwara définit les quasi-morphismes de comptage sur les groupes agissant sur des espaces hyperboliques, généralisant la construction de Brooks \cite{Brooks} sur les groupes libres. Dans le cas des groupes modulaires des surfaces compactes de type fini, Mladen Bestvina et Koji Fujiwara utilisent cette construction pour montrer que l'espace des quasi-morphismes non triviaux est de dimension infinie (voir \cite{Bestvina-Fujiwara}). L'espace hyperbolique considéré est alors le complexe des courbes de la surface, sur lequel le groupe modulaire de la surface considérée agit \og faiblement proprement discontinûment\fg, propriété qui garantit en particulier la non-trivialité de certains quasi-morphismes obtenus par la construction de Fujiwara.

Comme on sait que $\Gamma$ agit sur un espace hyperbolique (le graphe des rayons), la construction \cite{Fujiwara} de Fujiwara nous donne des quasi-morphismes sur $\Gamma$. On cherche alors à montrer que certains de ces quasi-morphismes sont non triviaux. Malheureusement, l'action de $\Gamma$ sur le graphe des rayons n'est pas  \og faiblement proprement discontinue \fg \ (voir l'énoncé au début de la section \ref{rq-WPD}). On peut néanmoins définir un \og nombre d'intersections positives\fg, qui nous permet de montrer que l'axe $(\alpha_k)_k$ est \og non retournable \fg (proposition \ref{copies}). Cette propriété généralise le fait pour $h$ de ne pas être conjugué à son inverse. Plus précisément, on montre que pour tout segment orienté suffisamment long de l'axe $(\alpha_k)_k$, si un élément de $\Gamma$ envoie ce segment dans un voisinage \og proche \fg \ de l'axe $(\alpha_k)_k$, alors l'image du segment est orientée dans le même sens que le segment d'origine. Cette propriété de l'axe $(\alpha_k)_k$ ainsi que l'action de $h$ sur cet axe permettent de construire un quasi-morphisme non trivial explicite (proposition \ref{prop-qm}).

On utilise ensuite encore une fois l'élement $h \in \Gamma$, ainsi qu'un conjugué de son inverse, pour montrer grâce à un autre théorème de Bestvina et Fujiwara \cite{Bestvina-Fujiwara} et à la propriété \ref{copies} de non retournement que l'espace $\tilde Q(\Gamma)$ des quasi-morphismes non triviaux sur $\Gamma$ est de dimension infinie (théorème \ref{dim infinie}).

\subsection{Remerciements.}

Je remercie mon directeur de thèse, Frédéric Le Roux, pour sa grande disponibilité, ses nombreux conseils et ses relectures minutieuses des différentes versions de ce texte. Merci à Danny Calegari de l'intérêt qu'il a porté à ce travail, et de m'avoir suggéré d'ajouter un exemple d'élément hyperbolique de $scl$ nulle, en plus des questions posées sur son blog... Merci également à Nicolas Bergeron pour ses explications autour des surfaces hyperboliques.

\section{Première étude du graphe des rayons : diamètre infini et demi-axe géodésique}\label{section1}

On cherche ici à montrer que le graphe des rayons est de diamètre infini. Dans ce but, on va construire une suite de rayons $(\alpha_n)_{n\geq0}$ et montrer qu'elle n'est pas bornée dans le graphe des rayons. On code certains rayons par des suites de segments, pour pouvoir les manipuler plus facilement dans les preuves. On définit à partir de ce codage la suite de rayons $(\alpha_n)_n$ qui nous intéresse. On montre enfin que cette suite n'est pas bornée dans le graphe des rayons, et qu'elle définit un demi-axe géodésique. Les résultats montrés autour de cette suite nous seront à nouveau utiles dans la section \ref{section-qm}.

\subsection{Préliminaires}

On utilisera dans toute la suite les notations, propositions, et le vocabulaire suivants.

\subsubsection*{Ensemble de Cantor $K$}
On note $K$ un ensemble de Cantor plongé dans $\Sph^2$, et on choisit $\infty$ un point de $\Sph^2 - K$. On identifie $\R^2 -K$ et $\Sph^2-(K\cup \{\infty\})$. Si $K'$ est un autre ensemble de Cantor plongé dans $\Sph^2$ et $\infty'$ un point de $\Sph^2 - K'$, alors il existe un homéomorphisme de $\Sph^2$ qui envoie $K'$ sur $K$ et $\infty'$ sur $\infty$ (voir par exemple l'appendice $A$ de \cite{Beguin-Crovisier-FLR_Cantor}).

\subsubsection*{Arcs, homotopies et isotopies}
Soit ${a}:[0,1] \rightarrow \Sph^2$ une application continue telle que $\{ {a}(0)\}$ et $\{{a}(1)\}$ sont inclus dans  $K\cup \{\infty\}$ et telle que ${a}(]0,1[)$ est inclus dans $\Sph^2-(K\cup \{\infty\})$. On appellera  \emph{arc} cette application ${a}$, que l'on confondra parfois avec l'image de $]0,1[$ par ${a}$. Si de plus l'application ${a}$ est injective, on dira que ${a}$ est un  \emph{arc simple} de $\Sph^2-(K \cup \{\infty\})$.\\
On dira que deux arcs ${a}$ et ${b}$ de $\Sph^2-(K \cup \{\infty\})$ sont \emph{homotopes} s'il existe une application continue $H:[0,1]\times [0,1] \rightarrow \Sph^2$ telle que :
\begin{itemize}
\item $H(0,\cdot)={a}(\cdot)$ et $H(1,\cdot)={b}(\cdot)$.
\item $H(\cdot,0)$ et $H(\cdot,1)$ sont constantes (les extrémités sont fixes).
\item $H(t,s) \in \Sph^2-(K \cup \{\infty\})$ pour tous $(t,s) \in [0,1]\times ]0,1[$.
\end{itemize}
Si ${a}$ et ${b}$ sont simples, homotopes, et si de plus il existe une homotopie $H$ telle que pour tout $t\in[0,1]$, $H(t,\cdot)$ est un arc simple, alors on dira que ${a}$ et ${b}$ sont \emph{isotopes}. David Epstein a montré que sur une surface, deux arcs homotopes sont isotopes (voir (\cite{Epstein}). Dans ce texte, on confondra isotopie et homotopie sur les surfaces.

On dira que deux classes d'isotopies d'arcs $\alpha$ et $\beta$ sont \emph{homotopiquement disjointes} s'il existe des représentants $a$ de $\alpha$ et $b$ de $\beta$ tels que $a(]0,1[)$ et $b(]0,1[)$ sont disjoints. On dira que deux arcs $a$ et $b$ sont \emph{homotopiquement disjoints} s'ils représentent deux classes d'isotopies homotopiquement disjointes. Un \emph{bigone} entre deux arcs $a$ et $b$ est une composante connexe du complémentaire de $a\cup b$ dans $\Sph^2-(K\cup \{\infty\})$ homéomorphe à un disque et dont le bord est la réunion d'un sous-arc de $a$ et d'un sous-arc de $b$. On dira que deux arcs propres $a$ et $b$ sont \emph{en position d'intersection minimale} si toutes leurs intersections sont transverses et s'il n'y a aucun bigone entre $a$ et $b$.

\subsubsection*{Graphe des rayons}
\begin{definition}
Un  \emph{rayon} est une classe d'isotopie d'arcs simples $\alpha$ ayant pour extrémités $\alpha(0)=\infty$ et $\alpha(1)\in K$. On appellera  \emph{point d'attachement du rayon} le point $\{\alpha(1) \}$.
\end{definition}

\begin{definition}
Le  \emph{graphe des rayons}, noté $X_r$, est le graphe défini comme suit (voir \cite{blog-Calegari}) :
\begin{itemize}
\item Les sommets sont les rayons définis précédemment.
\item Deux sommets sont reliés par une arête si et seulement si ils sont homotopiquement disjoints.
\end{itemize}
\end{definition}

\subsubsection*{Préliminaires sur les classes d'isotopies de courbes}

On utilisera à plusieurs reprises les résultats suivants, adaptés de \cite{Casson-Bleiler}, \cite{Handel} et \cite{Matsumoto}. On munit  $\Sph^2 - (K\cup \{ \infty\})$ d'une métrique hyperbolique complète de première espèce. Son revêtement universel est le plan hyperbolique $\mathbb{H}^2$.

\begin{prop} \label{prop 3.5}
Soient $\mathcal A$ et $\mathcal B$ deux familles localement finies d'arcs simples de $\Sph^2-(K\cup \{\infty\})$ telles que tous les éléments de $\mathcal A$ (respectivement $\mathcal B$) sont deux à deux homotopiquement disjoints. On suppose que pour tous $a \in \mathcal A$ et $b \in \mathcal B$, $a$ et $b$ sont en position d'intersection minimale.\\
Alors il existe un homéomorphisme isotope à l'identité par une isotopie qui fixe $K\cup \{ \infty \}$ en tout temps et telle que pour tous $a \in \mathcal A$ et $b \in \mathcal B$, $h(a)$ et $h(b)$ sont géodésiques.
\end{prop}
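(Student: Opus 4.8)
The plan is to use the standard technique of geodesic representatives for arcs in a hyperbolic surface, carried out equivariantly in the universal cover $\mathbb{H}^2$, and then to realize the required normalization by a single ambient isotopy. First I would recall that, for the chosen complete hyperbolic metric of the first kind on $\Sph^2 - (K \cup \{\infty\})$, each simple arc $a$ with endpoints in $K \cup \{\infty\}$ is freely homotopic (rel its ideal endpoints) to a unique geodesic $a^\ast$ running between the two corresponding points at infinity of $\mathbb{H}^2$; uniqueness comes from the fact that two distinct complete geodesics in $\mathbb{H}^2$ with the same endpoint data coincide. Since the families $\mathcal A$ and $\mathcal B$ are locally finite, the collections of geodesic representatives $\{a^\ast : a \in \mathcal A\}$ and $\{b^\ast : b \in \mathcal B\}$ are again locally finite (a small neighbourhood of any point meets only finitely many of the original arcs, and the bounded-Hausdorff-distance bound between $a$ and $a^\ast$ keeps this true after straightening).

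Next I would check that straightening preserves the two combinatorial hypotheses. Because the elements of $\mathcal A$ are pairwise homotopically disjoint, their geodesic representatives are pairwise disjoint: if $a_1^\ast$ and $a_2^\ast$ crossed, one could lift to $\mathbb{H}^2$ and see two geodesics with linked endpoints, contradicting the fact that the original disjoint arcs $a_1, a_2$ have unlinked endpoint pairs on $\partial \mathbb{H}^2$. The same argument applies to $\mathcal B$. For a pair $a \in \mathcal A$, $b \in \mathcal B$ in minimal position, the geodesics $a^\ast$ and $b^\ast$ realize the geometric intersection number, and since $a$ and $b$ were already in minimal position (transverse, no bigons), the number of intersection points and their cyclic pattern are unchanged: there is a bijection between $a \cap b$ and $a^\ast \cap b^\ast$. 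This is the classical bigon criterion applied arc-by-arc.

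Finally I would promote the straightening to an ambient isotopy fixing $K \cup \{\infty\}$. The idea is that for each individual arc there is an isotopy of $\Sph^2$ supported near that arc carrying it to its geodesic representative and fixing the marked points; the point is to do this coherently for the (locally finite) union $\mathcal A \cup \mathcal B$. One way is to build a homeomorphism $h$ directly: take tubular neighbourhoods of the arcs, small enough that the only crossings inside a neighbourhood of $a \cup a^\ast$ are the intended ones, and interpolate, using the fact that on a surface an isotopy of an arc rel endpoints extends to an ambient isotopy (isotopy extension theorem), then apply this simultaneously on the disjoint regions determined by local finiteness. The resulting $h$ is isotopic to the identity through an isotopy fixing $K \cup \{\infty\}$ at all times, and $h(a) = a^\ast$, $h(b) = b^\ast$ are geodesic for all $a \in \mathcal A$, $b \in \mathcal B$.

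The main obstacle I anticipate is the last step: carrying out all the individual straightening isotopies \emph{simultaneously} and coherently, so that straightening one arc of $\mathcal A$ does not disturb the minimal-position configuration of another arc of $\mathcal A$ with some arc of $\mathcal B$, while still ending with \emph{every} arc of both families geodesic. Local finiteness is what makes this feasible — one can work in a locally finite cover of the surface by the supports of the individual isotopies and glue — but writing down the interpolation carefully, especially near the (finitely many, locally) intersection points where a neighbourhood of $a$ meets a neighbourhood of $b$, is the delicate part. The uniqueness of geodesic representatives is what guarantees the procedure is consistent: whatever order one processes the arcs in, the target configuration is forced.
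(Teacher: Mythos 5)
The paper does not actually prove this proposition: it is stated as background, adapted from the cited references (Casson--Bleiler, Handel, Matsumoto), so there is no internal proof to compare against. Your sketch is the standard argument underlying those references --- geodesic representatives via endpoints on $\partial\mathbb{H}^2$, preservation of disjointness (unlinked endpoints) and of minimal position (bigon criterion), then a single ambient isotopy obtained from local finiteness and isotopy extension --- and it is correct, with the one genuinely delicate point (performing all the straightening isotopies simultaneously and coherently) correctly identified as such.
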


\begin{prop} \label{prop-hyp}
Soit ${a}$ et ${b}$ deux arcs de $\Sph^2 - (K \cup \{\infty\})$. Si $\tilde {a}$ est un relevé de $a$ au revêtement universel, alors il existe deux points $p^-$ et $p^+$ du bord $\partial \mathbb{H}^2$ du revêtement universel $\mathbb{H}^2$ tels que $\tilde a(t)$ tend vers $p^-$, respectivement $p^+$ lorsque $t$ tend vers $0$, respectivement $1$. On appelle extrémités de $\tilde a$ ces deux points. Si $\tilde {a}$ et $\tilde {b}$ sont deux relevés respectifs de ${a}$ et ${b}$ au revêtement universel qui ont les mêmes extrémités au bord, alors ${a}$ et ${b}$ sont isotopes dans  $\Sph^2 - (K \cup \{\infty\})$.
\end{prop}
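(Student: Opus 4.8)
The plan is to reduce the statement to the case where $a$ and $b$ are geodesics, using Proposition~\ref{prop 3.5}, and then to invoke the fact that two distinct points of $\partial\mathbb{H}^2$ are joined by a unique geodesic of $\mathbb{H}^2$.

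I first treat the existence of the endpoints $p^-,p^+$. Observe that the restriction of $a$ to $]0,1[$ is a proper arc of $S:=\Sph^2-(K\cup\{\infty\})$: since $a(t)$ tends to $a(0)$, respectively $a(1)$, which do not lie in $S$, the point $a(t)$ leaves every compact subset of $S$ as $t\to 0^+$, respectively $t\to 1^-$. Hence, for any lift $\tilde a$, the point $\tilde a(t)$ leaves every compact subset of $\mathbb{H}^2$, so every accumulation point of $\tilde a(t)$ in $\overline{\mathbb{H}^2}:=\mathbb{H}^2\cup\partial\mathbb{H}^2$ lies on $\partial\mathbb{H}^2$; it remains to see that there is only one. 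For this I would use two facts about complete hyperbolic surfaces of the first kind drawn from \cite{Casson-Bleiler,Handel,Matsumoto}: every proper arc of such a surface is homotopic, relatively to its ends, to a geodesic; and the lifts of two homotopic proper arcs converge to the same points of $\partial\mathbb{H}^2$. (For a simple $a$, the first fact is Proposition~\ref{prop 3.5} applied to $\mathcal A=\{a\}$, which even gives a homeomorphism isotopic to the identity, fixing $K\cup\{\infty\}$, taking $a$ to a geodesic.) Now the geodesic $g$ homotopic to $a$ is bi-infinite, for otherwise $g$ would have finite length near one of its ends and $g(t)$ would converge inside $S$ by completeness, contradicting that its limit $a(1)$ lies outside $S$; therefore any lift of $g$ is a complete geodesic line of $\mathbb{H}^2$ and has two well-defined endpoints on $\partial\mathbb{H}^2$. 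By the second fact, the corresponding lift of $a$ converges to those same two points, which I call $p^-$ and $p^+$.

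For the isotopy criterion, suppose the lifts $\tilde a$ and $\tilde b$ have the same endpoints. In the situations occurring in this paper, $a$ and $b$ are essential arcs joining two distinct ends (rays, or essential loops based at $\infty$), so that $p^-\neq p^+$; I assume this (the remaining case, where $a$ and $b$ are peripheral around one and the same end, does not arise in our applications). After an isotopy fixing $K\cup\{\infty\}$ (put $a$ and $b$ in minimal position and apply Proposition~\ref{prop 3.5} to $\mathcal A=\{a\}$ and $\mathcal B=\{b\}$), I may assume $a$ and $b$ are themselves geodesics. By the previous step their lifts are complete geodesic lines of $\mathbb{H}^2$ with the same endpoints $p^-$ and $p^+$; since $\mathbb{H}^2$ has a unique geodesic with two prescribed distinct endpoints, $\tilde a=\tilde b$, and projecting to $S$ yields $a=b$. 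Hence $a$ and $b$ are isotopic.

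The delicate point is the uniqueness of the limit, i.e.\ the fact that the lift of an \emph{arbitrary} proper arc of this infinite-type surface, a priori neither geodesic nor simple, converges to a single point of $\partial\mathbb{H}^2$ and does not accumulate on a nondegenerate subarc of the boundary; this is where completeness and the first-kind hypothesis truly enter. A self-contained substitute for the results of \cite{Casson-Bleiler,Handel,Matsumoto} would be to confine the tail of $\tilde a$ inside the successive \emph{convex} lifts of a nested sequence of subsurfaces with geodesic boundary cutting off the end of $S$ towards which $a$ converges, and to verify that the intersection of these convex regions is empty, forcing the accumulation set of $\tilde a$ to be a single point; making this last step rigorous requires some care with the geometry of these subsurfaces near the Cantor set.
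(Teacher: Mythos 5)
The paper gives no proof of this proposition: it is stated as background, adapted from \cite{Casson-Bleiler}, \cite{Handel} and \cite{Matsumoto}, so there is nothing to compare line by line. Your argument is the standard one those references supply --- properness of the lift forces accumulation on $\partial\mathbb{H}^2$, straightening to a complete geodesic plus invariance of endpoints under homotopy gives a single well-defined limit, and uniqueness of the geodesic with prescribed distinct endpoints gives the isotopy criterion --- and the one delicate step you honestly flag (that the lift of an arbitrary proper arc tending to a Cantor-set end of a first-kind structure converges to a \emph{single} boundary point rather than accumulating on an interval) is precisely the content the paper outsources to those citations, so your reconstruction is consistent with the paper's intent.
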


\subsection{Codage de certains rayons}\label{partie codage}

\subsubsection*{\'Equateur}

On choisit à l'aide de la proposition \ref{prop 3.5} un cercle topologique $\E$ de $\Sph^2$ contenant $K \cup \{ \infty\}$ et tel que tous les segments de $\E - (K\cup \{\infty\})$ sont géodésiques. On appellera  \emph{équateur} ce cercle. On choisit une orientation sur l'équateur, et on appelle  \emph{hémisphère nord} le cercle topologique situé à sa gauche, et  \emph{hémisphère sud} celui situé à sa droite.

\subsubsection*{Choix de segments de $\E$}

Comme sur la figure figure \ref{figu:segments}, on choisit un point $p$ de $\E - \{\infty\}$ tel que les deux composantes connexes de $\E - \{\infty,p\}$ contiennent chacune des points de $K$. On choisit ensuite une suite $(p_n)_{n\in \N}$ de points de $K$ sur la composante connexe de $\E - \{\infty,p\}$ située à droite de $\infty$, telle que ${p}_0$ est le premier point de $K$ à droite de $\infty$ sur $\E$ et $p_{n+1}$ est à droite de $p_n$ pour tout $n\in \N$. On choisit de même une suite $(p_n)_{n<0}$ sur la composante connexe de $\E - \{\infty,p\}$ située à gauche de $\infty$, telle que  ${p}_{-1}$ est le premier point à gauche de $\infty$ et telle que $p_{n-1}$ est à gauche de $p_n$ pour tout $n <0$. On note $s_0$ la composante connexe de $\E - (K\cup \{\infty\})$ entre $\infty$ et $p_0$ et $s_{-1}$ celle entre $\infty $ et $p_{-1}$. On choisit pour tout $n>0$ une composante connexe $s_n$ de $\E - K$ entre $p_{n-1}$ et $p_n$, et pour tout $n<-1$  une composante connexe $s_n$ de $\E - K$ entre $p_{n}$ et $p_{n+1}$. On note $S$ l'ensemble des segments topologiques $\{s_n\}_{n\in \Z}$, et $\textbf{S}$ leur union $\bigcup_{n\in \Z} s_n$.

%figure
\pgfdeclareimage[interpolate=true,height=5cm]{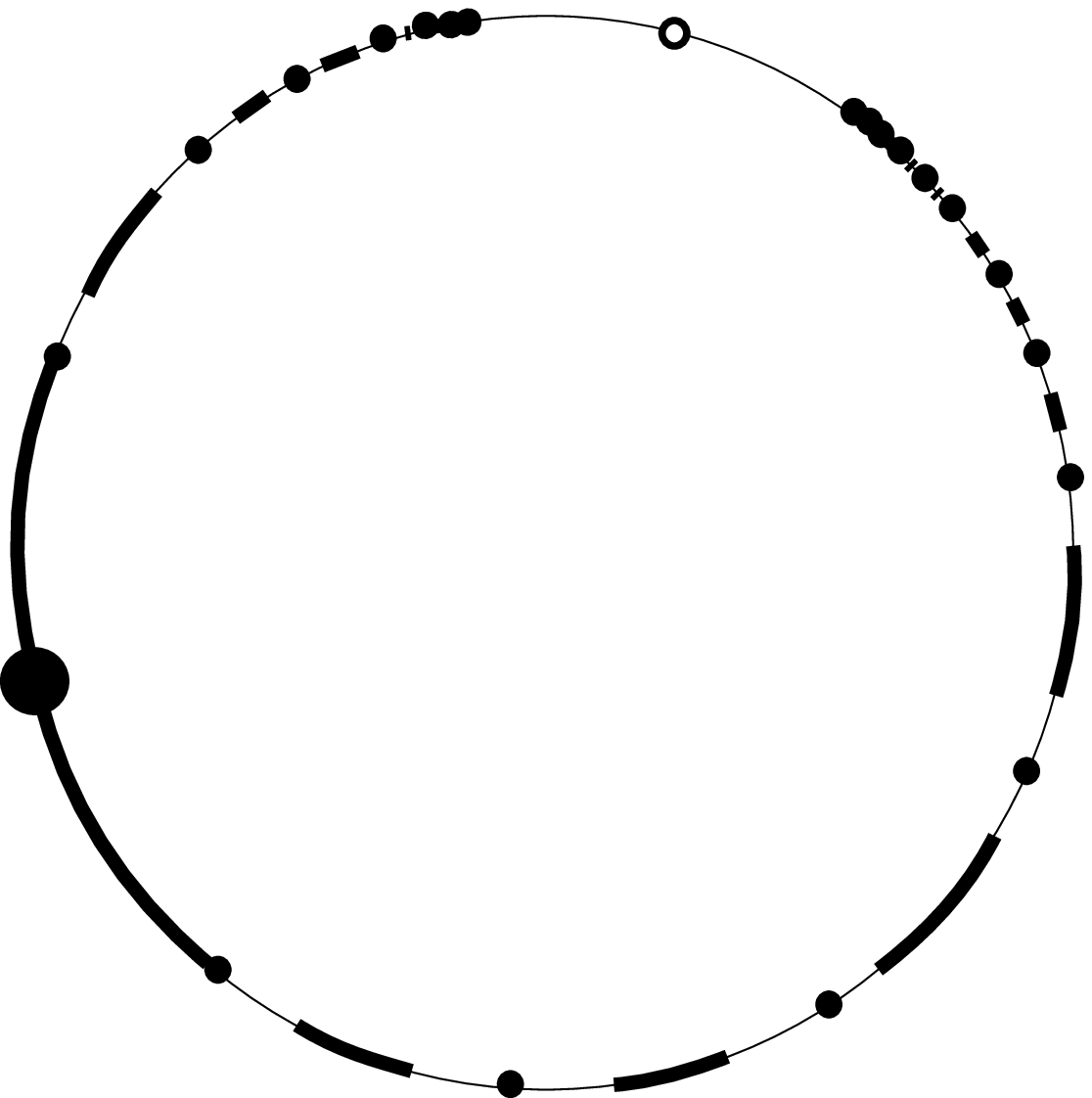}{segments}
\begin{figure}[!h]
\labellist
\small\hair 2pt
\pinlabel $\infty$ at -13 122
\pinlabel $s_{-1}$ at 26 174
\pinlabel $s_0$ at 39 80
\pinlabel ${p}_{-1}$ at -5 210
\pinlabel ${p}_0$ at 57 24
\pinlabel $s_1$ at 104 21
\pinlabel $s_2$ at 187 13
\pinlabel $s_3$ at 263 59
\pinlabel $s_4$ at 298 140
\pinlabel ${p}_1$ at 150 -12
\pinlabel ${p}_2$ at 251 11
\pinlabel ${p}_3$ at 316 87
\pinlabel ${p}_4$ at 332 186
\pinlabel $p$ at 201 324
\pinlabel \textit{Hémisphère Nord} at 127 248
\pinlabel \textit{Hémisphère Sud} at -77 248
\endlabellist
\centering
\includegraphics[scale=0.5]{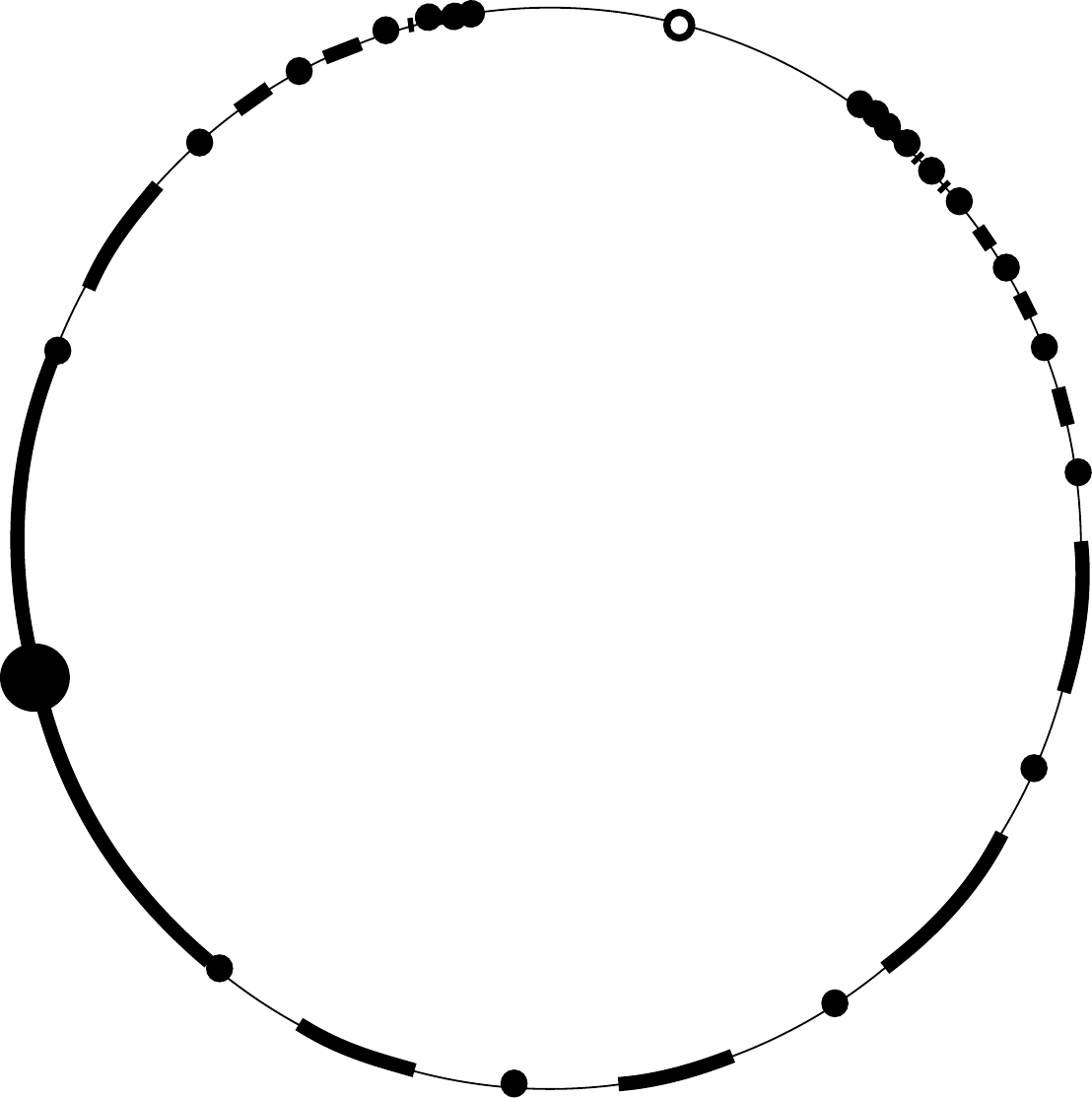}
\vspace{0.5cm}
\caption{Choix d'un équateur, d'un point $p$, d'une suite de points de $K$ et d'un ensemble de segments.}
\label{figu:segments}
\end{figure}

\subsubsection*{Suite associée}

Si $\alpha$ est une classe d'isotopie d'arcs de $\Sph^2-(K \cup \{\infty\})$, on notera $\alpha_\#$ l'unique arc géodésique représentant $\alpha$ dans $\Sph^2- (K \cup \{\infty\})$. On note $X'_S$ l'ensemble des classes d'isotopies d'arcs $\alpha$ de $\Sph^2-(K \cup \{\infty\})$ joignant l'infini et un point de l'ensemble de Cantor $K$ (éventuellement avec auto-intersection) tels que :
\begin{enumerate}
\item $\E \cap \alpha_\# \subset \textbf{S}$.
\item La composante connexe de $\alpha_\#  - \E$ qui part de $\infty$ est incluse dans l'hémisphère sud.
\item $\E \cap \alpha_\#$ est un ensemble fini.
\end{enumerate}
On note $X_S$ le sous-ensemble de $X'_S$ composé des classes d'isotopies d'arcs simples (c'est-à-dire l'ensemble des rayons vérifiant les trois propriétés précédentes).

Soit $\alpha \in X'_S$. On peut associer à $\alpha$ une suite de segments de la manière suivante :
on parcourt $\alpha_\#$ depuis $\infty$ et jusqu'à son point d'attachement, et on note $u_1$ le premier segment de $S$ intersecté par $\alpha_\#$, $u_2$ le second, ..., et $u_k$ le k-ième pour tout $k$, jusqu'à avoir atteint le point d'attachement.
On note $\mathring u(\alpha)$ cette suite de segments (finie), et $u(\alpha)$ la suite $\mathring u(\alpha)$ à laquelle on ajoute le point d'attachement, et que l'on appelle  \emph{suite complète associée à $\alpha$} (voir la figure \ref{exemple-suite} pour un exemple). Comme la géodésique $\alpha_\#$ est unique dans la classe d'isotopie $\alpha$, la suite de segment associée à $\alpha$ est bien définie. De façon générale, on appellera  \emph{suite complète} la donnée d'une suite finie de segments et d'un point de $K$.

%figure
\pgfdeclareimage[interpolate=true,height=5cm]{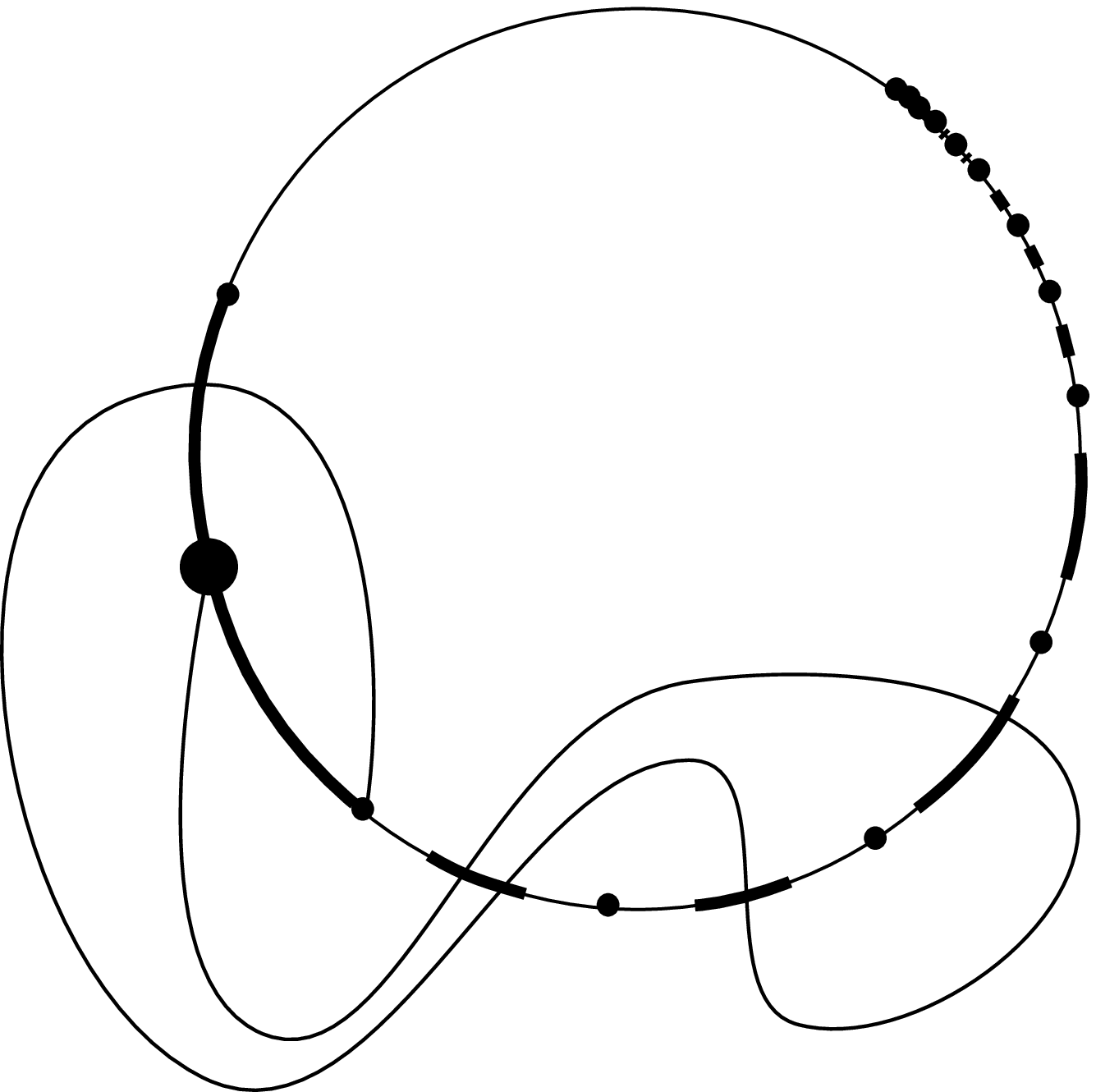}{exemple-suite}
\begin{figure}[!h]
\labellist
\small\hair 2pt
\pinlabel $\infty$ at 51 182
\pinlabel ${p}_0$ at 116 84
\pinlabel ${p}_1$ at 213 50
\pinlabel ${p}_2$ at 310 72
\pinlabel ${p}_3$ at 384 157
\pinlabel ${p}_4$ at 394 247
\pinlabel $\gamma$ at 3 50
\pinlabel \textit{Nord} at 154 332
\pinlabel \textit{Sud} at 64 332
\endlabellist
\centering
\includegraphics[scale=0.5]{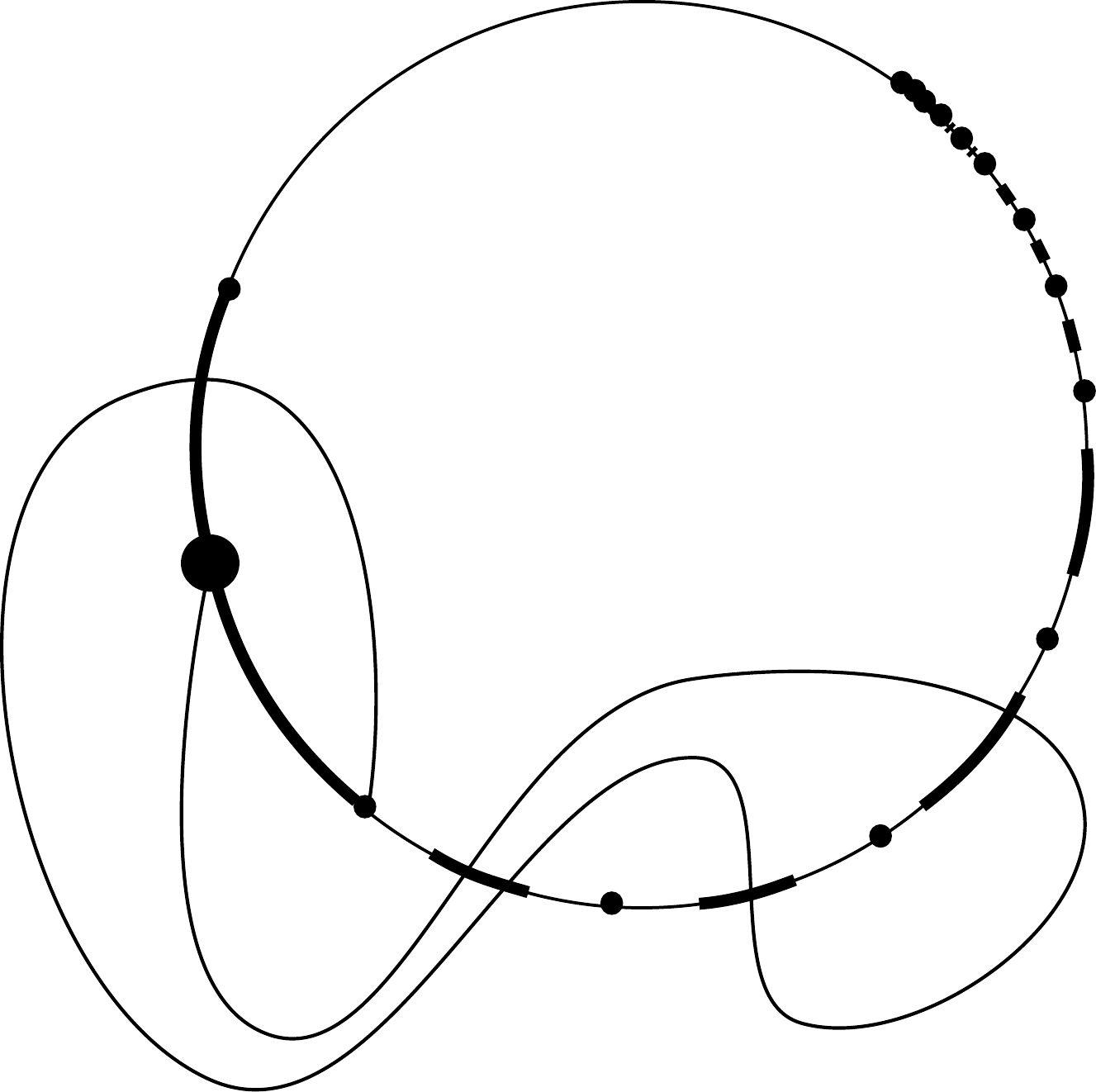}
\vspace{0.5cm}
\caption{Exemple d'un rayon $\gamma \in X_S$ : ici, le point d'attachement est ${p}_0$, la suite complète de segments associée est $u(\gamma)=s_1s_3s_2s_1s_{-1}({p}_0)$, et on a $\mathring u(\gamma)=s_1s_3s_2s_1s_{-1}$.}
\label{exemple-suite}
\end{figure}

\begin{lemme}
\`A chaque suite complète correspond une unique classe d'isotopie d'arcs de $X'_S$ (éventuellement avec auto-intersections) entre l'infini et un point de $K$. En particulier, si deux rayons de $X_S$ ont la même suite complète associée, alors ils sont égaux.
\end{lemme}

\begin{proof} Soient $\alpha$ et $\beta$ deux arcs ayant la même suite complète associée, disons $u_0...u_n({p}_j)$. Au revêtement universel, on choisit un relevé $\tilde \infty$ de $\infty$ (sur le bord du disque hyperbolique). On peut voir ce point $\tilde \infty$ comme la limite au bord d'un relevé quelconque de $\alpha$. On relève ensuite $\beta$ à partir de ce point. Le revêtement universel est pavé par des demi-domaines fondamentaux correspondant aux relevés d'un hémisphère : chaque demi-domaine fondamental a pour bord un relevé de l'équateur. On commence à relever $\alpha$ et $\beta$ à partir de $\tilde \infty$ dans un même demi-domaine fondamental $F_0$ (correspondant à un relevé de l'hémisphère sud). On définit $(F_i)_{0\leq i\leq n}$ comme la suite des relevés alternativement de l'hémisphère nord et sud, traversés par $\tilde \alpha_\#$. On remarque que $(F_i)_i$ est entièrement déterminée par le codage : on sort de $F_0$ pour arriver dans un relevé $F_1$ de l'hémisphère nord en traversant le seul relevé de $u_0$ qui borde $F_0$. On continue ainsi jusqu'au demi-domaine $F_n$, qui a un seul relevé $\tilde {p}_j$ de ${p}_j$ dans son bord. Ainsi les deux relevés $\tilde \alpha$ et $\tilde \beta$ de $\alpha$ et $\beta$ ont mêmes extrémités, donc $\alpha$ et $\beta$ sont isotopes dans $\Sph^2 - (K \cup \{\infty\})$ (d'après la proposition \ref{prop-hyp}). \end{proof}

\`A partir de maintenant on ne fera plus de différence explicite entre une classe d'isotopie d'arcs de $X'_S$ et sa suite complète associée.

\paragraph{Remarque :} Les suites complètes de segments correspondant à des rayons ne commencent jamais ni par $s_{-1}$, ni par $s_0$, et elles n'ont jamais plusieurs fois de suite le même segment (sinon il y a un bigone).

\subsection{Une suite de rayons particulière}
On construit ici une suite particulière de rayons, $(\alpha_k)_{k\in \N}$, dont les propriétés nous seront utiles pour toute la suite. \\

Si $u=u_0u_1...u_n ({p}_j)$ est une suite complète de segments, on rappelle qu'on note $\mathring u = u_0 u_1 ...u_n$ la suite de segments sans le point d'attachement.
On notera alors $\mathring u^{-1}:=u_n...u_1 u_0$ la suite de segments inverse.

%figure
\pgfdeclareimage[interpolate=true,height=5cm]{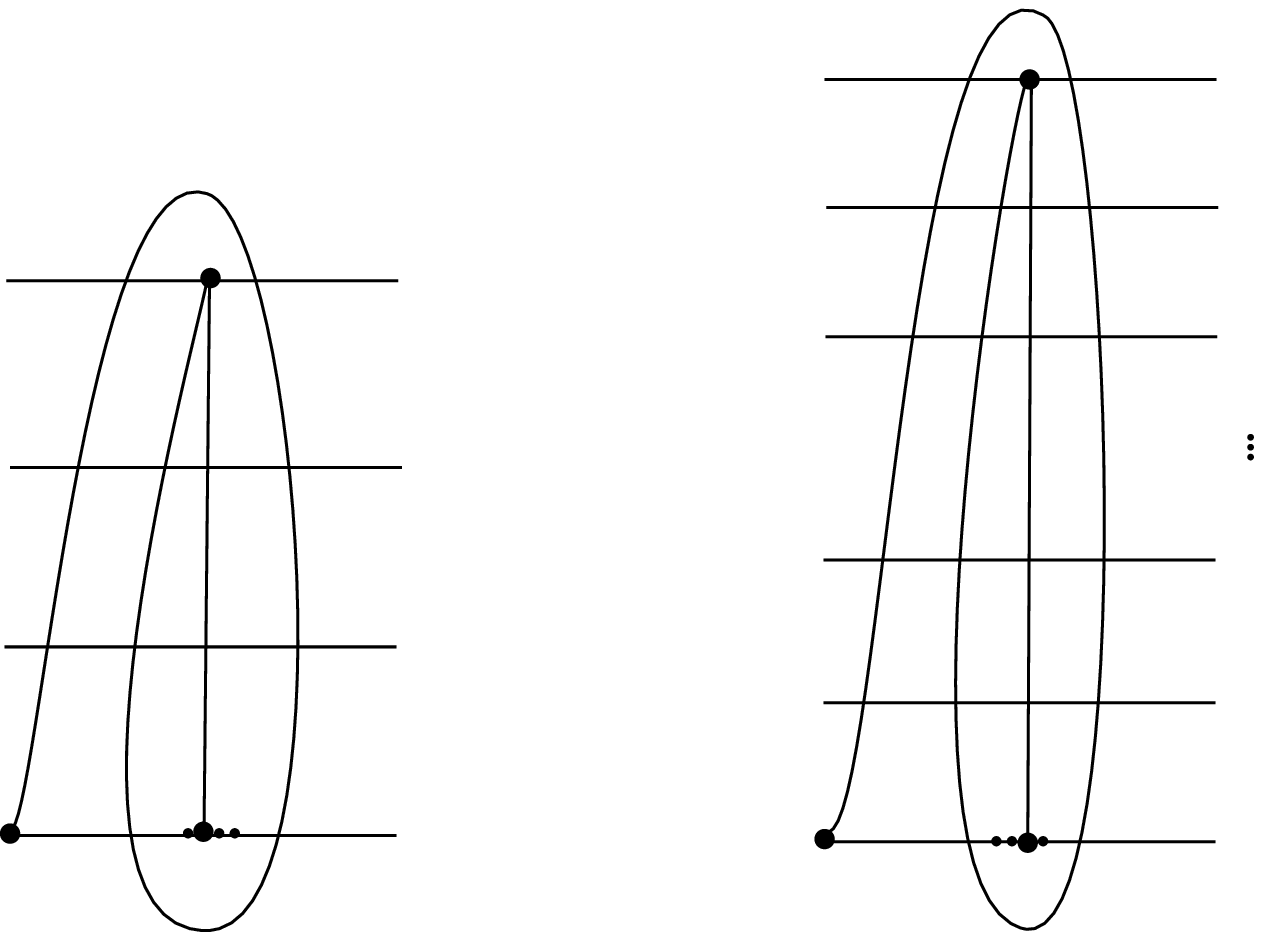}{definition-alpha}
\begin{figure}[!h]
\labellist
\small\hair 2pt
\pinlabel $\infty$ at 60 195
\pinlabel $\infty$ at 297 251
\pinlabel $s_{-1}$ at -11 187
\pinlabel $s_0$ at 123 187
\pinlabel $s_{-1}$ at 227 245
\pinlabel $s_0$ at 360 245
\pinlabel $s_{-1}$ at 360 171
\pinlabel $s_1$ at 123 133
\pinlabel $s_{-1}$ at 123 80
\pinlabel $s_1$ at 123 27
\pinlabel $s_{-1}$ at 360 65
\pinlabel $s_1$ at 360 208
\pinlabel $s_{2}$ at 25 32
\pinlabel $s_1$ at 360 106
\pinlabel $s_{k+1}$ at 260 30
\pinlabel $s_k$ at 360 26

\pinlabel ${p}_1$ at 58 18
\pinlabel ${p}_2$ at 3 18
\pinlabel ${p}_k$ at 296 17
\pinlabel ${p}_{k+1}$ at 240 17

\pinlabel \textit{Nord} at -15 214
\pinlabel \textit{Sud} at -15 163
\pinlabel \textit{Nord} at -15 109
\pinlabel \textit{Sud} at -15 58
\pinlabel \textit{Nord} at -15 7

\pinlabel \textit{Sud} at 230 226
\pinlabel \textit{Nord} at 230 189

\pinlabel \textit{Sud} at 230 46

\pinlabel $\alpha_1$ at 69 106
\pinlabel $\alpha_2$ at 66 215
\pinlabel $\alpha_k$ at 307 130
\pinlabel $\alpha_{k+1}$ at 304 270
\endlabellist
\centering
\vspace{0.3cm}
\includegraphics[scale=0.8]{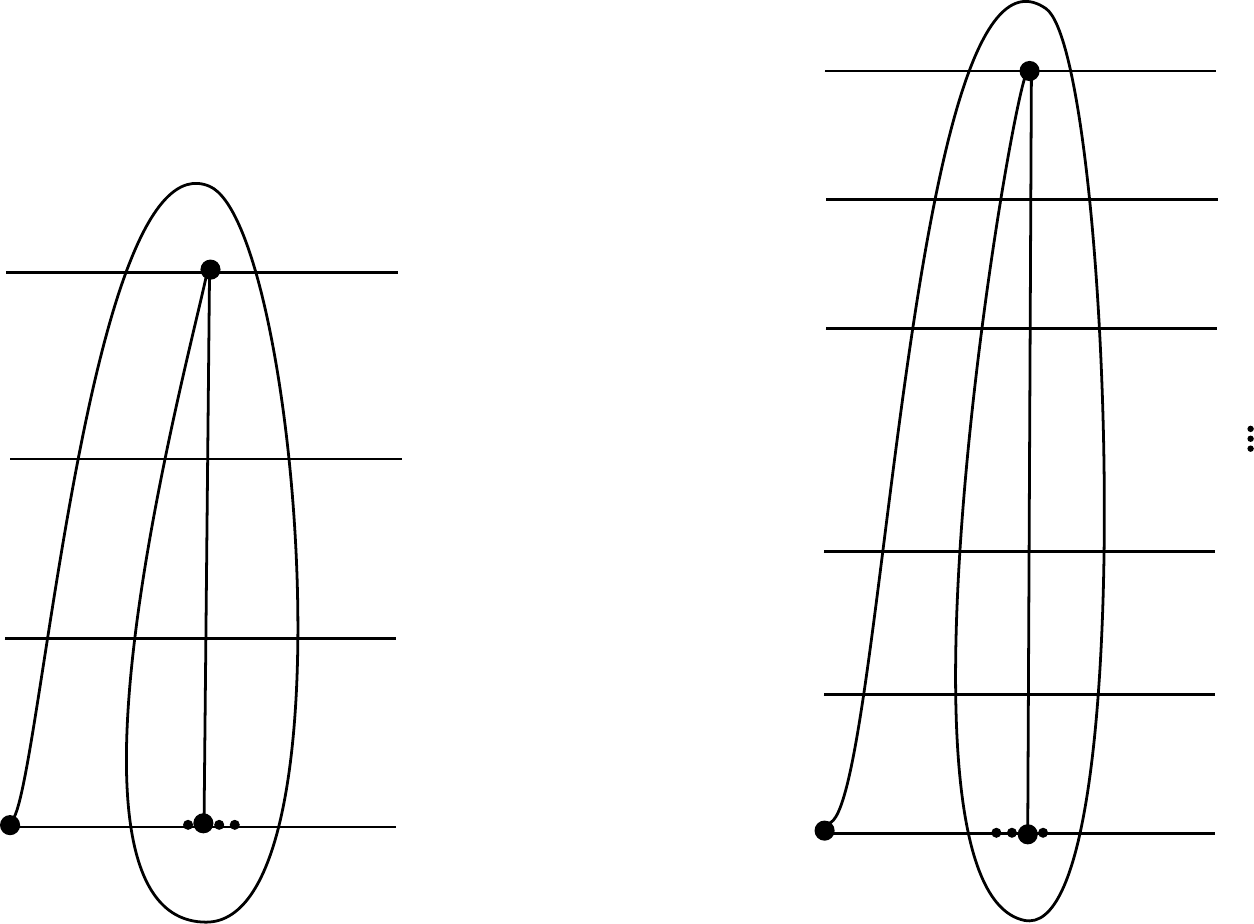}
\vspace{0.5cm}
\caption{Définition de $\alpha_2$ à partir de $\alpha_1$, puis de $\alpha_{k+1}$ à partir de $\alpha_k$ : représentation des intersections locales de ces rayons avec $\E$.}
\label{def-alpha}
\end{figure}

\begin{definition} On définit la suite $(\alpha_k)_{k\geq 0}$ de rayons de la façon suivante:
\begin{itemize}
\item $\alpha_0$ est la classe d'isotopie du segment $s_0$, avec pour extrémités $\infty$ et ${p}_0$.
\item $\alpha_1$ est le rayon codé par $s_1 s_{-1} ({p}_1)$ (voir figure \ref{alpha1-2}).
\item Pour tout $k\geq 1$, $\alpha_{k+1}$ est le rayon défini à partir de $\alpha_k$ comme sur la figure \ref{def-alpha} : on part de $\infty$, on longe $\alpha_{k\#}$  jusqu'à son point d'attachement ${p}_k$ dans un voisinage tubulaire de $\alpha_{k\#}$, on contourne ce point par la droite en traversant les segments voisins, c'est-à-dire en traversant d'abord $s_{k+1}$ puis $s_k$, on longe à nouveau $\alpha_{k\#}$ dans un voisinage tubulaire, on contourne $\infty$ en traversant $s_0$ puis $s_{-1}$, on longe une dernière fois $\alpha_{k\#}$ dans un voisinage tubulaire jusqu'à son point d'attachement et on va s'attacher au point ${p}_{k+1}$ sans traverser l'équateur.
\end{itemize}
\end{definition}

En termes de codage, on obtient les suites complètes suivantes :
\begin{itemize}
\item $\alpha_0 =s_0({p}_0)$.
\item $\alpha_1 = s_1 s_{-1} ({p}_1)$.
\item $\alpha_{k+1}=\mathring \alpha_k s_{k+1} s_k \mathring \alpha_k^{-1} s_0 s_{-1} \mathring \alpha_k ({p}_{k+1})$ pour tout $k \geq 1$.
\end{itemize}

\paragraph{Remarque :} Si l'on note $long(\alpha_k)$ le nombre de fois que $\alpha_{k\#}$ traverse un hémisphère, c'est-à-dire le nombre de composantes connexes de $\alpha_{k\#} - \E$, ou encore le nombre de copies de demi-domaines fondamentaux traversés par un relevé géodésique $\tilde \alpha_k$ au revêtement universel, alors $long(\alpha_k)$ est impair pour tout $k \geq 1$. En effet on a $long(\alpha_1)=3$ (voir figure \ref{def-alpha}) et par construction $long(\alpha_{k+1}) = 3 long(\alpha_k)+2$ donc $long(\alpha_{k+1})$ a la même parité que $long(\alpha_k)$. Ainsi on est sûr d'être dans la situation de la figure \ref{def-alpha}, à savoir que le dernier hémisphère traversé par $\alpha_k$ est l'hémisphère sud, donc ${p}_{k+1}$ est toujours à gauche de ${p}_{k}$ dans la représentation choisie (figure \ref{def-alpha}), et lorsque $\alpha_{k+1}$ contourne $\infty$, ce rayon traverse d'abord $s_0$ puis $s_{-1}$ pour éviter toute auto-intersection.

%figure
\pgfdeclareimage[interpolate=true,height=5cm]{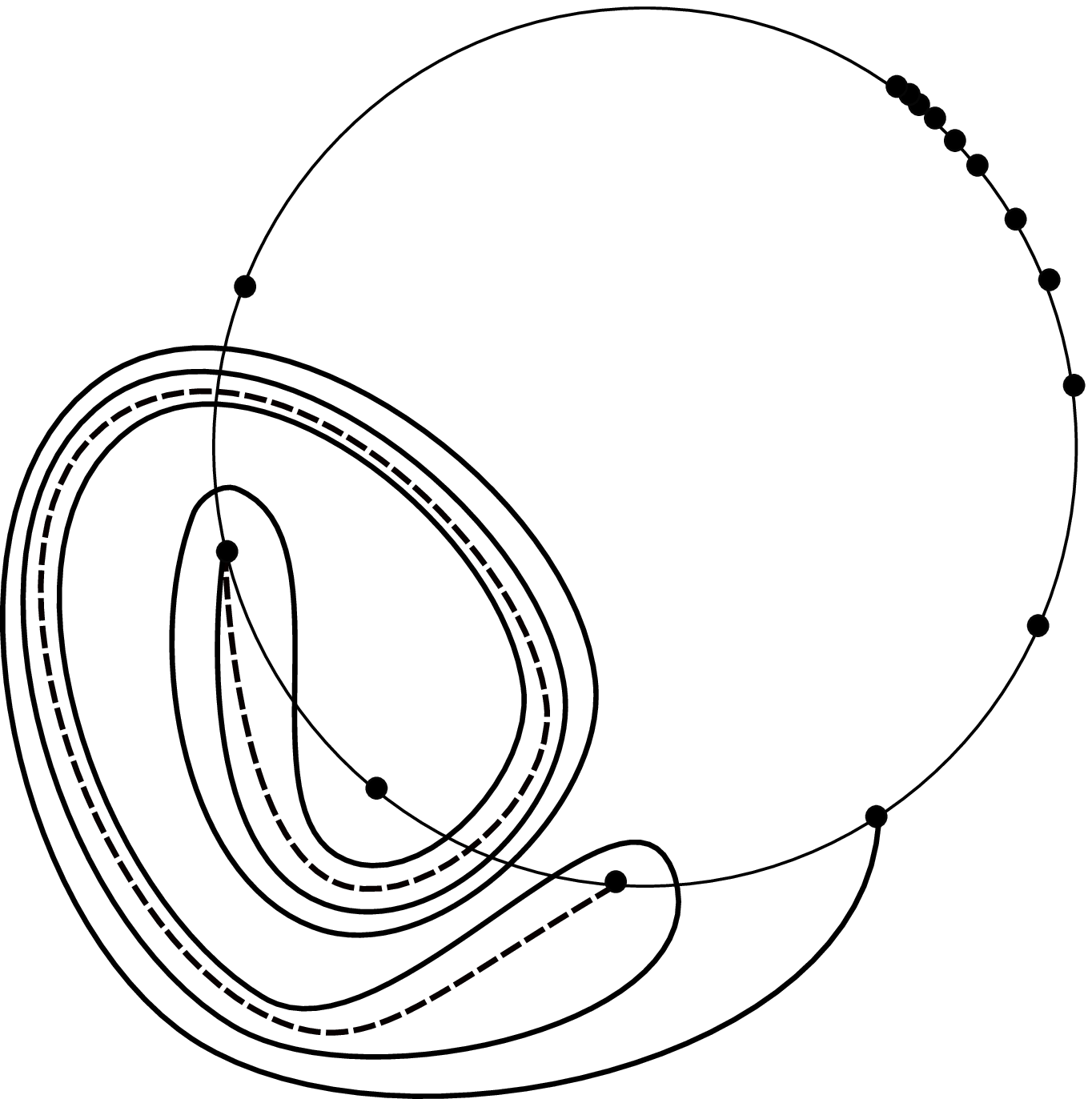}{alpha1-2}
\begin{figure}[!h]
\labellist
\small\hair 2pt
\pinlabel $\infty$ at 82 206
\pinlabel ${p}_0$ at 125 102
\pinlabel ${p}_1$ at 225 62
\pinlabel ${p}_2$ at 329 92
\pinlabel ${p}_3$ at 389 164
\pinlabel ${p}_4$ at 404 257
\pinlabel \textit{Nord} at 154 332
\pinlabel \textit{Sud} at 64 332
\endlabellist
\centering
\includegraphics[scale=0.5]{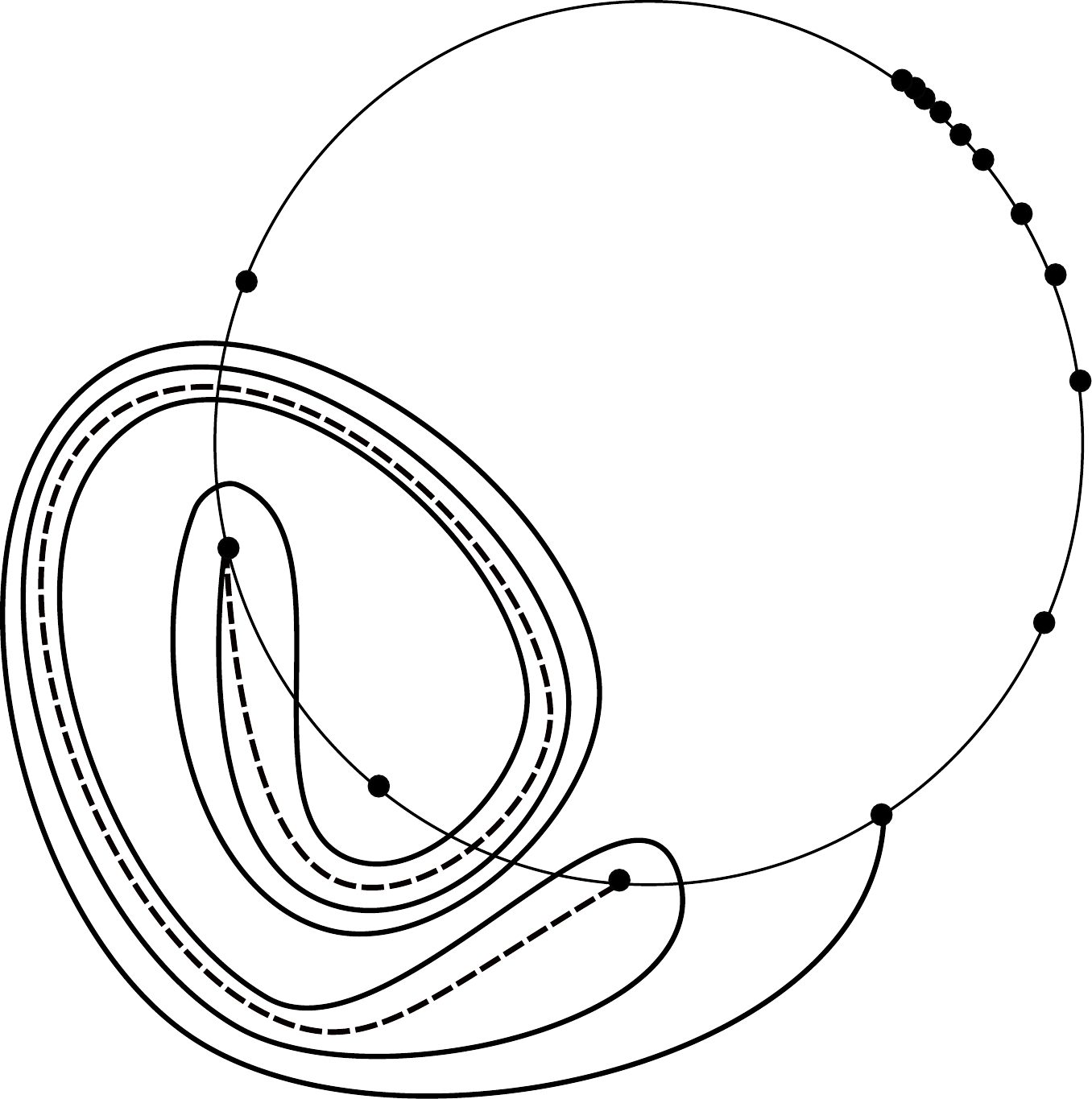}
\vspace{0.5cm}
\caption{Représentation sur la sphère de $\alpha_1= s_1 s_{-1} ({p}_1)$ en pointillés et de $\alpha_2=s_1s_{-1} s_2s_1s_{-1} s_1s_0s_{-1} s_1s_{-1}({p}_2)$ en trait plein.}
\label{alpha1-2}
\end{figure}

\subsection{Diamètre infini et demi-axe géodésique} \label{def de A}

Soit $\beta$ un rayon et $\mathring u = u_0 u_1...u_n$ une suite de segments. On dira que $\beta$  \emph{commence par $\mathring u$} si la première composante connexe de $\beta_ \# - \E$ est dans l'hémisphère sud et si les premières intersections de $\beta_\#$ avec $\E$ sont, dans cet ordre, les segments $u_0,u_1,...,u_n$. En particulier, si $\beta \in X_S$, ceci revient à dire que $u(\beta)$ commence par $\mathring u$.

\begin{definition}
Soit $A:X_r \rightarrow \N$ l'application qui à toute classe d'isotopie de rayon $\gamma$ associe :
$$A(\gamma):=Max\{i\in \N  \emph{ tel que } \gamma  \emph{ commence par } \mathring \alpha_i\}.$$
\end{definition}

\noindent Comme $\mathring \alpha_0$ est la suite vide, $A$ est bien définie pour tout $\gamma \in X_r$. Le lemme suivant montre que l'application $A$ est $1$-lipschitzienne.
\begin{lemme} \label{lemme distance}
Soient $\beta$ et $\gamma$ deux rayons tels que $d(\gamma,\beta)=1$. Alors : $$|A(\gamma) - A(\beta)| \leq 1.$$
\end{lemme}

\begin{proof}
On pose $n:=A(\beta)$. On choisit des représentants géodésiques $\beta_\#$ de $\beta$ et $\gamma_\#$ de $\gamma$ (voir figure \ref{prop-alpha}). L'arc $\beta_\#$ commence par parcourir la courbe représentant $\mathring \alpha_n$ : en effet, il doit traverser les mêmes segments, dans le même ordre. Il existe un homéomorphisme fixant chaque point de $K$ et $\infty$, fixant globalement $\E$ et envoyant le début de $\beta_\#$, c'est-à-dire la composante de $\beta_\#$ entre $\infty$ et $s_{-1}$, sur le début de $\alpha_n$, c'est-à-dire  la composante de $\alpha_n$ entre $\infty$ et $s_{-1}$. Comme $\gamma$ est à distance $1$ de $\beta$, $\gamma_\#$ est disjoint de $\beta_\#$ et doit sortir de la zone grise, qui ne contient aucun point de $K$, pour s'accrocher à un point de $K$ sans couper $\beta_\#$, donc sans couper la courbe pleine sur la figure \ref{prop-alpha}. Ainsi $\gamma_\#$ doit commencer par parcourir une des deux flèches pointillées, ce qui revient exactement à dire que $\gamma$ commence par $\mathring \alpha_{n-1}$. On a donc $A(\gamma)\geq n-1$. Par symétrie, on a le résultat voulu. \end{proof}

%figure
\pgfdeclareimage[interpolate=true,height=5cm]{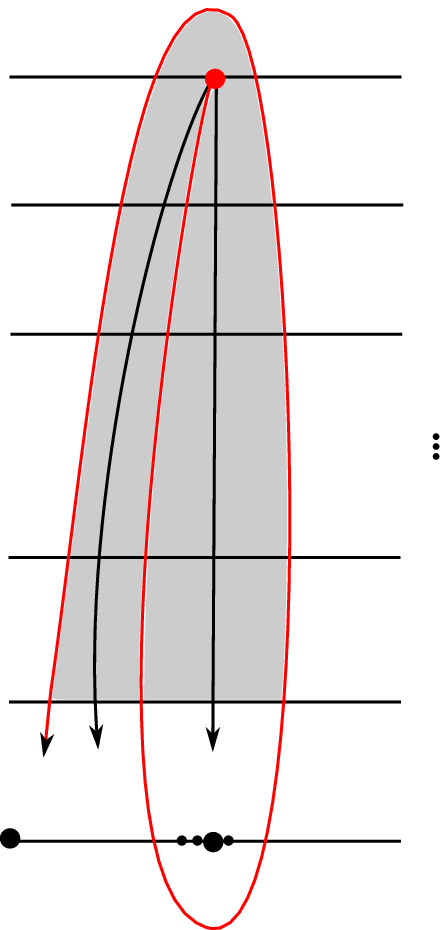}{prop-alpha}
\begin{figure}[!h]
\labellist
\small\hair 2pt
\pinlabel $\mathring \alpha_n$ at 90 219
\pinlabel $\infty$ at 62 253
\pinlabel $s_{-1}$ at 127 65
\pinlabel $s_{-1}$ at -10 245
\pinlabel $s_0$ at 127 245
\pinlabel $s_1$ at 127 210
\pinlabel ${p}_n$ at 2 15
\pinlabel ${p}_{n-1}$ at 70 15
\endlabellist
\centering
\includegraphics[scale=0.7]{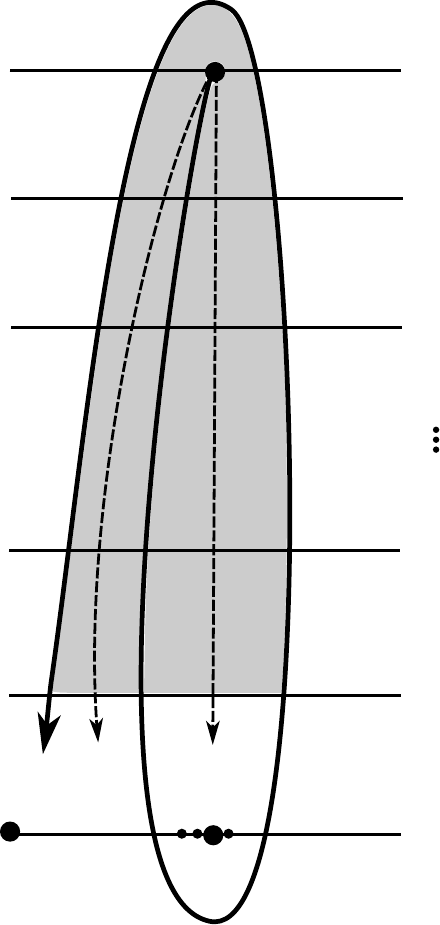}
\caption{Représentation des intersections locales de $\mathring \alpha_{n}$ avec $\E$. Par définition de $(\alpha_k)_k$, il n'y a aucun point de $K$ dans la zone grisée.}
\label{prop-alpha}
\end{figure}

\begin{corollaire} \label{dist}
Soient $\beta$ et $\gamma$ deux rayons quelconques de $X_r$. On a : $$|A(\beta)-A(\gamma)| \leq d(\beta,\gamma).$$
\end{corollaire}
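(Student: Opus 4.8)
The plan is to deduce Corollary \ref{dist} from Lemma \ref{lemme distance} by a standard chaining argument along a geodesic in the ray graph $X_r$. The key point is that Lemma \ref{lemme distance} says $A$ varies by at most $1$ between adjacent vertices, and $A$ is defined on all of $X_r$ (this was observed just after the definition of $A$, since $\mathring\alpha_0$ is the empty sequence), so $A$ is a $1$-Lipschitz function on the graph. Iterating a $1$-Lipschitz bound along a shortest path immediately gives the global bound $|A(\beta)-A(\gamma)|\le d(\beta,\gamma)$.

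Concretely, I would argue as follows. Let $\beta,\gamma$ be two rays. If they lie in different connected components of $X_r$ then $d(\beta,\gamma)=+\infty$ and there is nothing to prove; otherwise set $N:=d(\beta,\gamma)<\infty$ and choose a geodesic edge-path $\beta=\gamma_0,\gamma_1,\dots,\gamma_N=\gamma$ in $X_r$, so that $d(\gamma_{i},\gamma_{i+1})=1$ for each $i\in\{0,\dots,N-1\}$. By Lemma \ref{lemme distance} applied to each consecutive pair, $|A(\gamma_{i+1})-A(\gamma_i)|\le 1$. Then by the triangle inequality for the absolute value,
\[
|A(\beta)-A(\gamma)| = \left|\sum_{i=0}^{N-1}\bigl(A(\gamma_{i+1})-A(\gamma_i)\bigr)\right| \le \sum_{i=0}^{N-1} |A(\gamma_{i+1})-A(\gamma_i)| \le N = d(\beta,\gamma),
\]
which is exactly the claimed inequality.

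There is essentially no obstacle here: the only thing to be slightly careful about is that $A$ is genuinely defined on every vertex of $X_r$ (not merely on rays in $X_S$), which is guaranteed because every ray $\gamma$ trivially "commence par $\mathring\alpha_0$", so the set in the definition of $A(\gamma)$ is nonempty and $A(\gamma)\ge 0$. One should also note that Lemma \ref{lemme distance} applies to \emph{any} pair of rays at distance $1$, with no restriction to $X_S$, so it can legitimately be invoked at each step of an arbitrary geodesic. If one prefers not to invoke connectivity, one can simply remark that the inequality is vacuous when $d(\beta,\gamma)=\infty$. This corollary is the tool that will be used to show the diameter of $X_r$ is infinite: since $A(\alpha_k)\ge k$ while $A(\beta)$ is bounded for a suitably chosen ray $\beta$, the distances $d(\alpha_k,\beta)$ tend to infinity.
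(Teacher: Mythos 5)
Votre preuve est correcte et suit exactement la même démarche que celle du texte : on choisit une géodésique entre $\beta$ et $\gamma$ et on applique le lemme \ref{lemme distance} à chaque paire de sommets consécutifs, puis l'inégalité triangulaire (sous-additivité de la valeur absolue). Les précisions supplémentaires que vous apportez (cas de composantes connexes distinctes, bonne définition de $A$ sur tout $X_r$) sont justes mais ne changent pas l'argument.
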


\begin{proof}
On choisit une géodésique dans le graphe des rayons entre $\beta$ et $\gamma$ et par sous-additivité de la valeur absolue on en déduit le résultat grâce au lemme \ref{lemme distance}. \end{proof}

\noindent Cette inégalité nous permet de minorer certaines distances, et en particulier on en déduit le théorème suivant :

\begin{theo}\label{theo-infini}
Le diamètre du graphe des rayons est infini.
\end{theo}

\begin{proof} Par définition de $A$, on a $A(\alpha_0)=0$ et $A(\alpha_n)=n$ pour tout $n\in \N$. D'après le corollaire \ref{dist}, on a donc $d(\alpha_0,\alpha_n)\geq n$.
\end{proof}

\begin{prop} \label{alpha_k geod}
Le demi-axe $(\alpha_k)_{k\in \N}$ est géodésique.
\end{prop}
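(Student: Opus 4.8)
The claim is that $(\alpha_k)_{k\in\mathbb N}$ is a geodesic ray in $X_r$, i.e.\ that $d(\alpha_i,\alpha_j)=|i-j|$ for all $i,j$. Since consecutive $\alpha_k$ and $\alpha_{k+1}$ should be at distance $1$ (they have disjoint representatives — this must be checked from the construction, or is perhaps already established), we get $d(\alpha_i,\alpha_j)\le|i-j|$ by the triangle inequality. The real content is the lower bound $d(\alpha_i,\alpha_j)\ge|i-j|$, and here I would lean entirely on the Lipschitz function $A\colon X_r\to\mathbb N$ from Corollary~\ref{dist}.

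**Key steps.**

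First I would record that $A(\alpha_k)=k$ for every $k$: by construction the complete sequence $\alpha_{k+1}=\mathring\alpha_k\,s_{k+1}s_k\,\mathring\alpha_k^{-1}\,s_0s_{-1}\,\mathring\alpha_k(p_{k+1})$ begins with $\mathring\alpha_k$, and in fact with $\mathring\alpha_i$ for every $i\le k$ (since $\mathring\alpha_k$ itself begins with $\mathring\alpha_{k-1}$, and so on by induction), so $A(\alpha_k)\ge k$; the reverse inequality $A(\alpha_k)\le k$ requires checking that $\alpha_k$ does \emph{not} begin with $\mathring\alpha_{k+1}$ — this is clear since $\mathring\alpha_{k+1}$ is strictly longer than $\mathring\alpha_k$ and the first divergence of $\alpha_k$ from $\mathring\alpha_{k+1}$ happens right after the common prefix $\mathring\alpha_k$ (at $p_k$ versus $s_{k+1}$), so in fact $A(\alpha_k)=k$ exactly. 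Second, for $i<j$, Corollary~\ref{dist} gives
$$d(\alpha_i,\alpha_j)\ \ge\ |A(\alpha_j)-A(\alpha_i)|\ =\ j-i.$$
Third, combine with the upper bound: consecutive vertices $\alpha_k,\alpha_{k+1}$ are joined by an edge (their geodesic representatives are disjoint, visible on Figure~\ref{def-alpha}: $\alpha_{k+1}$ is drawn in a tubular neighbourhood of $\alpha_{k\#}$, hence can be pushed off it), so the path $\alpha_i,\alpha_{i+1},\dots,\alpha_j$ has length $j-i$ and $d(\alpha_i,\alpha_j)\le j-i$. Hence equality, which is exactly the statement that $(\alpha_k)_k$ is a geodesic half-axis.

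**Main obstacle.**

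The only genuine subtlety is verifying that consecutive $\alpha_k$, $\alpha_{k+1}$ really are homotopically disjoint, i.e.\ that the ``tubular neighbourhood'' picture of Figure~\ref{def-alpha} can be made rigorous at the level of isotopy classes rather than just in a hand-drawn figure; but since $\alpha_{k+1}$ is constructed to run alongside $\alpha_{k\#}$ inside an arbitrarily thin tubular neighbourhood and only escape it near the endpoints, one can take a representative of $\alpha_{k+1}$ literally disjoint from $\alpha_{k\#}$, which settles it. Everything else is a one-line consequence of the already-proved $1$-Lipschitz property of $A$; in particular no further estimate on distances in $X_r$ is needed.
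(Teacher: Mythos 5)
Votre démonstration est correcte et suit exactement la même stratégie que celle de l'article : la borne supérieure $d(\alpha_i,\alpha_j)\le j-i$ vient de la disjonction des rayons consécutifs par construction, et la borne inférieure vient du corollaire \ref{dist} appliqué avec $A(\alpha_k)=k$. Les détails supplémentaires que vous donnez (vérification que $A(\alpha_k)=k$ exactement et que $\alpha_k$, $\alpha_{k+1}$ sont homotopiquement disjoints) sont corrects et explicitent simplement ce que l'article laisse implicite.
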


\begin{proof}
Par construction de la suite $(\alpha_k)_{k\in \N}$, on a $d(\alpha_k,\alpha_{k+1})=1$  pour tout $k\geq 0$. Par ailleurs $d(\alpha_k,\alpha_0)\geq k$ pour tout $k\geq 0$ (c'est une conséquence du corollaire \ref{dist}). Ainsi pour tout $k \geq 0$, on a $d(\alpha_k,\alpha_0)=k$.\end{proof}

\section{Hyperbolicité du graphe des rayons}\label{section2}

On dira qu'un espace métrique $X$ est \emph{géodésique} si entre deux points quelconques de $X$ il existe toujours au moins une géodésique, c'est-à-dire un chemin qui minimise la distance entre ces deux points. On rappelle la définition d'espace métrique hyperbolique au sens de Gromov. Pour plus de précisions sur les espaces hyperboliques, on pourra consulter par exemple \cite{Bridson-Haefliger}.

\begin{definition}[Espace hyperbolique]
On dira qu'un espace métrique géodésique $X$ est \emph{hyperbolique au sens de Gromov}, ou tout simplement \emph{hyperbolique}, s'il existe une constante $\delta\geq 0$ telle que pour tout triangle géodésique de $X$, chaque côté du triangle est inclus dans le $\delta$-voisinage des deux autres.
\end{definition}

On définit un graphe $X_\infty$ et on montre qu'il est hyperbolique par les mêmes arguments que ceux développés dans \cite{HPW} pour montrer l'hyperbolicité du graphe des arcs dans le cas des surfaces compactes à bord. On utilise ensuite cette hyperbolicité pour établir l'hyperbolicité du graphe des rayons. 

\subsection{Hyperbolicité du graphe des lacets simples basés en l'infini}

\subsubsection*{Graphe $X_\infty$ et chemins \og unicornes \fg}
On fixe $K$ un ensemble de Cantor de $\R^2$ et on compactifie $\R^2$ en ajoutant $\infty$, obtenant ainsi la sphère $\Sph^2$.
Un arc simple de $\Sph^2-K$ joignant l'infini à l'infini est dit \emph{essentiel} s'il ne borde pas un disque topologique, c'est-à-dire qu'il sépare la sphère en deux composantes dont chacune contient des points de $K$.

\begin{definition}
On construit le graphe $X_\infty$ comme suit :
\begin{itemize}
\item Les sommets sont les classes d'isotopies des arcs simples essentiels sur $\Sph^2 - K$ et joignant $\infty$ à $\infty$, où l'on identifie les arcs ayant même image et des orientations opposées.
\item Deux sommets sont reliés par une arête si et seulement si ils sont homotopiquement disjoints.
\end{itemize}
\end{definition}

Les graphes $X_\infty$ et $X_r$ sont naturellement munis d'une métrique où toutes les arêtes sont de longueur $1$. Le groupe $\Gamma = MCG(\R^2 - K)$ agit sur $X_\infty$ par isométries.
On adapte ici la preuve de \cite{HPW} de l'hyperbolicité du graphe des arcs dans le cas des surfaces à bord pour montrer l'hyperbolicité de $X_\infty$.

Soient $a$ et $b$ deux arcs simples essentiels sur $\Sph^2 - K$ joignant $\infty$ à $\infty$ et en position d'intersection minimale. On choisit une orientation sur chacun d'entre eux et on note $a^+$, $b^+$ les arcs orientés correspondant. Soit $\pi \in a\cap b$. Soit $a'$, respectivement $b'$, le sous-arc orienté de $a$ commençant comme $a$, respectivement comme $b$, et ayant $\pi$ pour deuxième extrémité. On note $a' \star b'$ la concaténation de ces deux sous-arcs ; en particulier, c'est un arc joignant $\infty$ à $\infty$. On suppose que cet arc est simple. Comme $a$ et $b$ sont en position d'intersection minimale, l'arc $a'\star b'$ est essentiel. Il définit donc un élément de $X_\infty$. On dira que $a' \star b'$ est un \emph{arc unicorne obtenu à partir de $a^+$ et $b^+$}.

On note que cet arc est déterminé de manière unique par le choix de $\pi \in a\cap b$, et que tous les points de $a\cap b$ ne définissent pas un arc sans auto-intersection. Par ailleurs, $a \cap b$ est un ensemble fini, car $a$ et $b$ ont des intersections transverses. Il y a donc un nombre fini d'arcs unicornes obtenus à partir de $a^+$ et $b^+$.

\paragraph{Fait :}\textit{Si $\pi$ et $\pi'$ sont deux points de $a\cap b$ définissant des arcs unicornes $a'\star b'$ et $a'' \star b''$, alors $a'' \subset a'$ si et seulement si $b' \subset b''$.}

\begin{definition}[Ordre total sur les arcs unicornes]
Soient $a^+$ et $b^+$ deux arcs essentiels orientés entre $\infty$ et $\infty$ sur $\Sph^2 -K$, en position minimale d'intersection. On ordonne les arcs unicornes entre $a^+$ et $b^+$ de la manière suivante : \\
$a' \star b' \leq a'' \star b''$ si et seulement si $a'' \subset a'$ et $b' \subset b''$. 
\end{definition}
Cet ordre est total. On note $(c_1,...,c_{n-1})$ l'ensemble ordonné des arcs unicornes antre $a^+$ et $b^+$. Il correspond en particulier à l'ordre des points $\pi$ lorsque l'on parcourt $b^+$.

On définit des chemins unicornes dans le graphe $X_\infty$ de la manière suivante :
\begin{definition}[Chemins unicornes entre arcs orientés] Soient $a^+$ et $b^+$ deux arcs essentiels orientés entre $\infty$ et $\infty$ sur $\Sph^2 -K$, en position minimale d'intersection.
La suite d'arcs unicornes $P(a^+,b^+)=(a=c_0,c_1,...,c_{n-1},c_n=b)$ est appelée  \emph{chemin unicorne entre $a^+$ et $b^+$}.
\end{definition}

\paragraph{Fait :}\textit{Soient $a$ et $b$ deux arcs orientés en position minimale d'intersection et soit $(c_0,...,c_n)$ le chemin unicorne entre ces deux arcs orientés. Soient $a'$ et $b'$ deux arcs en position minimale d'intersection tels que $a'$, respectivement $b'$, est isotope à $a$, respectivement à $b$, et orienté dans le même sens. On note $(d_0,d_1,...,d_{m-1},d_m)$ le chemin unicorne entre $a'$ et $b'$ orientés. Alors $n=m$ et $c_k$ est isotope à $d_k$ pour tout $k$.}
C'est une conséquence de la proposition \ref{prop 3.5}.

%\paragraph{Notation :} Si $\alpha \in X_\infty$, on oriente $\alpha$ en choisissant une orientation sur l'unique géodésique $\alpha_\#$ représentant $\alpha$. On notera alors $\alpha^+$ cet élément munit d'une orientation.

\begin{definition}[Chemins unicornes entre éléments de $X_\infty$ orientés] 
 Soient $\alpha^+$ et $\beta^+$ deux éléments de $X_\infty$ munis d'une orientation. Soient $a$ et $b$, deux représentants respectifs de $\alpha$ et $\beta$ qui sont en position minimale d'intersection, munis de l'orientation naturellement induite par $\alpha^+$ et $\beta^+$. Soit $P(a^+,b^+)=(c_0,...,c_n)$ le chemin unicorne associé. Pour tout $1\leq k \leq n$, on note $\gamma_k$ la classe d'isotopie de $c_k$. On pose alors : $P(\alpha^+,\beta^+)=(\gamma_0,\gamma_1,...,\gamma_n)$, qui définit le  \emph{chemin unicorne} entre $\alpha^+$ et $\beta^+$.
\end{definition}

\paragraph{Fait :}\textit{ Tout chemin unicorne est un chemin dans $X_\infty$ ; en effet, pour tout $0\leq i\leq n-1$, $c_i$ et $c_{i+1}$ sont homotopiquement disjoints.} C'est la remarque $3.2$ de \cite{HPW}).

\paragraph{Remarques :} 
\begin{enumerate}
\item Si $a\cap b=\emptyset$, on a alors $P(a^+,b^+)=(a,b)$.
\item Par abus de notation, on notera encore $P(a^+,b^+)$ l'ensemble des éléments de la suite $P(a^+,b^+)$.
\end{enumerate}

Les arcs unicornes ne dépendent que du voisinage de $a \star b$ : si l'on considère un voisinage fermé de $a \star b$ suffisamment petit (pour qu'il soit homotopiquement équivalent à $a \star b$), on peut alors voir les arcs unicornes comme arcs unicornes de la surface compacte à bord définie par ce voisinage. On est alors exactement dans le cas de l'article \cite{HPW}. Cette correspondance nous permet de voir tout chemin unicorne de $X_\infty$ comme un chemin unicorne d'un graphe des arcs d'une surface. En particulier, les lemmes $3.3$, $3.4$, $3.5$ et $4.3$ de \cite{HPW} restent vrais dans $X_\infty$. Comme la proposition $4.2$, puis le théorème $1.2$ en découlent, on obtient de la même façon l'hyperbolicité du graphe $X_\infty$. Il semble difficile de déduire l'hyperbolicité de $X_\infty$ de celle du graphe des arcs d'une seule surface : dans chaque preuve des lemmes, on doit passer par des surfaces différentes, qui dépendent des éléments de $X_\infty$ que l'on considère. Pour plus de commodités, on adapte la preuve de \cite{HPW} dans notre contexte. Le lemme \ref{slim unicorn}, le corollaire \ref{lemme4.3}, le lemme \ref{lemme3.5} et les propositions \ref{prop4.2} et \ref{G_infini hyp} correspondent, dans cet ordre, aux lemmes $3.3$, $4.3$, $3.5$, à la proposition $4.2$ et au théorème $1.2$ de \cite{HPW}.

On note que la preuve de \cite{HPW} ne s'adapte pas directement au graphe des rayons $X_r$ : en effet, l'arc obtenu à partir de deux représentants d'éléments du graphe des rayons orientés de l'infini jusqu'au point d'attachement va de l'infini à l'infini et n'appartient donc pas au graphe des rayons. Si l'on modifie la définition en choisissant l'arc unicorne comme parcourant le début de $a$ puis la fin de $b$, on obtient bien un arc dont la classe d'isotopie est un rayon, mais le lemme \ref{slim unicorn} devient faux, d'où la nécessité de passer par le graphe $X_\infty$.

\subsubsection*{Lemmes sur les chemins unicornes de $X_\infty$}

\begin{lemme}[Les triangles unicornes sont 1-fins] \label{slim unicorn}
Soient $\alpha^+, \beta^+$ et $\delta^+$ trois éléments de $X_\infty$ munis d'une orientation. Alors pour tout $\gamma \in P(\alpha^+,\beta^+)$, l'un des termes $\gamma^*$ de $P(\alpha^+,\delta^+) \cup P(\delta^+,\beta^+)$ est tel que $d(\gamma,\gamma^*)=1$ dans $X_\infty$.
\end{lemme}

\begin{proof}
On choisit des représentants géodésiques $a,b,d$ de $\alpha,\beta,\delta$. Soit $c \in P(a^+,b^+)$ : il existe $a'$ et $b'$ sous-arcs respectifs de $a$ et $b$ tels que $c=a' \star b'$. Si $c$ est disjoint de $d$, $\gamma^*=\delta$ convient. Sinon, soit $d'\subset d$ le sous arc maximal commençant comme $d^+$ et disjoint de $c$. Soit $\sigma \in c$ l'autre extrémité de $d'$. Le point $\sigma$ divise $c$ en deux sous-arcs, dont l'un est contenu dans $a'$ ou $b'$, disons $a'$ (le cas $b'$ est analogue). On note $a''$ ce sous-arc. Alors $c^*:=a'' \star d'$ est un terme de $P(a^+,d^+)$. De plus, $c$ et $c^*$ sont homotopiquement disjoints. \end{proof}

\begin{corollaire}\label{lemme4.3}
Soient $k\in \N$, $m\leq 2^k$ et soit $(\xi_0,...,\xi_m)$ un chemin dans $X_\infty$. On munit les $\xi_i$ d'une orientation arbitraire. Alors $P(\xi_0^+,\xi_m^+)$ est inclus dans un $k$-voisinage de $(\xi_0,...,\xi_m)$.
\end{corollaire}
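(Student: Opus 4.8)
The plan is to argue by induction on $k$, using the slim-triangle lemma (Lemma \ref{slim unicorn}) at each step of a dyadic subdivision of the path, exactly as in \cite{HPW}. All the orientations on the vertices $\xi_0,\dots,\xi_m$ are fixed once and for all at the outset and used consistently throughout; note also that by the ``Fait'' following the definition of unicorn paths, the unicorn path between two oriented elements of $X_\infty$ does not depend on the choice of minimal-position representatives, so the expressions $P(\xi_i^+,\xi_j^+)$ are unambiguous.

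For the base case $k=0$, we have $m\leq 1$. If $m=0$ then $P(\xi_0^+,\xi_0^+)=(\xi_0)$, which is trivially contained in the $0$-neighborhood of the one-vertex path. If $m=1$, then $\xi_0$ and $\xi_1$ are joined by an edge of $X_\infty$, hence are homotopically disjoint, so they admit disjoint representatives $a,b$ with $a\cap b=\varnothing$; by the first of the two Remarks following the definition of unicorn paths, $P(\xi_0^+,\xi_1^+)=(\xi_0,\xi_1)$, which lies in the $0$-neighborhood of $(\xi_0,\xi_1)$.

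For the inductive step, suppose the statement holds for $k$, and let $m\leq 2^{k+1}$ with $(\xi_0,\dots,\xi_m)$ a path in $X_\infty$ with fixed orientations. Set $j=\lceil m/2\rceil$, so that $j\leq 2^{k}$ and $m-j=\lfloor m/2\rfloor\leq 2^{k}$. Applying the induction hypothesis to the subpaths $(\xi_0,\dots,\xi_j)$ and $(\xi_j,\dots,\xi_m)$ gives that $P(\xi_0^+,\xi_j^+)$ is contained in the $k$-neighborhood of $(\xi_0,\dots,\xi_j)$ and $P(\xi_j^+,\xi_m^+)$ in the $k$-neighborhood of $(\xi_j,\dots,\xi_m)$; in particular both are contained in the $k$-neighborhood of the full path $(\xi_0,\dots,\xi_m)$. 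Now apply Lemma \ref{slim unicorn} to the oriented triple $\xi_0^+,\xi_m^+,\xi_j^+$: every $\gamma\in P(\xi_0^+,\xi_m^+)$ satisfies $d(\gamma,\gamma^*)=1$ in $X_\infty$ for some $\gamma^*\in P(\xi_0^+,\xi_j^+)\cup P(\xi_j^+,\xi_m^+)$. Since $\gamma^*$ is at distance at most $k$ from $(\xi_0,\dots,\xi_m)$, the triangle inequality yields that $\gamma$ is at distance at most $k+1$ from $(\xi_0,\dots,\xi_m)$, which completes the induction.

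There is no genuine obstacle here: the whole weight of the argument is carried by the $1$-slimness of unicorn triangles (Lemma \ref{slim unicorn}), and the rest is the standard dyadic bookkeeping. The only mild points of care are to keep the orientations of the $\xi_i$ fixed so that the unicorn paths in the slim-triangle application match those produced by the induction hypothesis, and to handle the degenerate lengths $m=0$ and $m=1$ in the base case as above.
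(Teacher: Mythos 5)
Votre preuve est correcte et suit essentiellement la même démarche que celle du texte : une subdivision dyadique du chemin combinée à la finesse $1$ des triangles unicornes (lemme \ref{slim unicorn}), appliquée $k$ fois. La seule différence est de présentation — le texte suit un élément $\gamma$ fixé à travers les subdivisions successives, tandis que vous organisez le même argument comme une récurrence sur $k$ en remontant les voisinages, ce qui revient au même.
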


\begin{proof}
Soit $\gamma \in P(\xi_0^+,\xi_m^+)$. Montrons qu'il existe $i$ tel que $d(\gamma,\xi_i)\leq k$. En appliquant le lemme \ref{slim unicorn} aux sommets $\xi_0^+$, $\xi_m^+$ et $\xi_{E(m/2)}^+$ (où $E(\cdot)$ désigne la partie entière), on obtient $\gamma_1^* \in P(\xi_0^+,\xi_{E(m/2)}^+) \cup P(\xi_{E(m/2)}^+,\xi_m^+)$ tel que $d(\gamma,\gamma_1^*)=1$. On note $(\alpha_1^+,\beta^+_1)$ le couple $(\xi_0^+,\xi_{E(m/2)}^+)$ ou $(\xi_{E(m/2)}^+,\xi_m^+)$ tel que $\gamma_1^* \in P(\alpha_1^+,\beta^+_1)$. \\
On applique alors le lemme \ref{slim unicorn} aux éléments $\alpha_1^+$, $\beta_1^+$ et $\xi_l^+$, où $l$ est choisi de telle sorte que $\xi_l$ est au milieu de $\alpha_1$ et $\beta_1$ sur le chemin $(\xi_0,...,\xi_m)$. On a alors $\gamma_2^* \in P(\alpha_1^+,\xi_l^+) \cup P(\xi_l^+,\beta_1^+)$ tel que $d(\gamma_1^*,\gamma_2^*)=1$, et donc $d(\gamma,\gamma_2^*)\leq 2$. On continue ainsi par récurrence en choisissant à chaque fois un élément $\xi_j$ au milieu des deux éléments concernés par le chemin unicorne précédent, et on finit par trouver $\gamma^*=\xi_i$ tel que $d(\gamma,\gamma_*)\leq k$.
\end{proof}

\begin{lemme} \label{lemme3.5}
Soient $\alpha^+,\beta^+ \in X_\infty$ orientés et soit $P(\alpha^+,\beta^+)=(\gamma_0,...,\gamma_n)$ le chemin unicorne associé dans $X_\infty$. Pour tous $0\leq i \leq j \leq n$, on considère $P(\gamma_i^+,\gamma_j^+)$, où $\gamma_i^+$, respectivement $\gamma_j^+$, a la même orientation que $a^+$, respectivement $b^+$. Alors ou bien $P(\gamma_i^+,\gamma_j^+)$ est un sous-chemin de $P(\alpha^+,\beta^+)$, ou bien $j=i+2$ et $d(\gamma_i,\gamma_j)=1$ dans $X_\infty$.
\end{lemme}

On choisit des représentant $a^+$ et $b^+$, et on note $(c_0,...,c_n)$ le chemin unicorne associé. Pour garder la terminologie de \cite{HPW}, on appelera \emph{demi-bigone} tout bigone ayant l'infini dans son bord. On montre d'abord le sous-lemme suivant :

\begin{sous-lemme}\label{ss-lemme-unicornes}
Soit $c=c_{n-1}$, c'est-à-dire que $c=a'\star b'$, avec l'intérieur de $a'$ disjoint de $b$. Soit $\tilde c$ un arc homotope à $c$ obtenu en poussant $a'$ en dehors de $a$ de telle sorte que $a' \cap \tilde c =\emptyset$. Alors ou bien $\tilde c$ et $a$ sont en position minimale d'intersection, ou bien ces deux arcs bordent exactement un demi-bigone : dans ce cas, après avoir poussé $\tilde c$ à travers ce demi-bigone, obtenant ainsi un arc $\overline{c}$, on a que $\overline c$ et $a$ sont en position minimale d'intersection.
\end{sous-lemme}

\begin{proof}[Preuve du sous-lemme \ref{ss-lemme-unicornes}] Les arcs $\tilde c$ et $a$ ne peuvent pas border un bigone, sinon $a$ et $b$ bordent un bigone, ce qui contredit la position minimale d'intersection. Ainsi si $\tilde c$ et $a$ ne sont pas en position minimale d'intersection, alors ils bordent un demi-bigone $\tilde c' a''$, où $\tilde c' \subset \tilde c$, et $a'' \subset a$ (voir la figure \ref{figu:unicornes} pour un exemple). Soit $\pi'=\tilde c' \cap a''$. Comme $\tilde c$ découpe la sphère en deux composantes connexes, l'une contient $a''$ et l'autre contient $b-b'$, donc l'intérieur de $a''$ est disjoint de $b$. En particulier, $a''$ est situé à la fin de $a$.
De plus, $\pi'$ et $\pi=a'\cap b'$ sont deux points d'intersection de $a\cap b$ successifs sur $b$ (sinon il y a un bigone).

%figure
\pgfdeclareimage[interpolate=true,height=5cm]{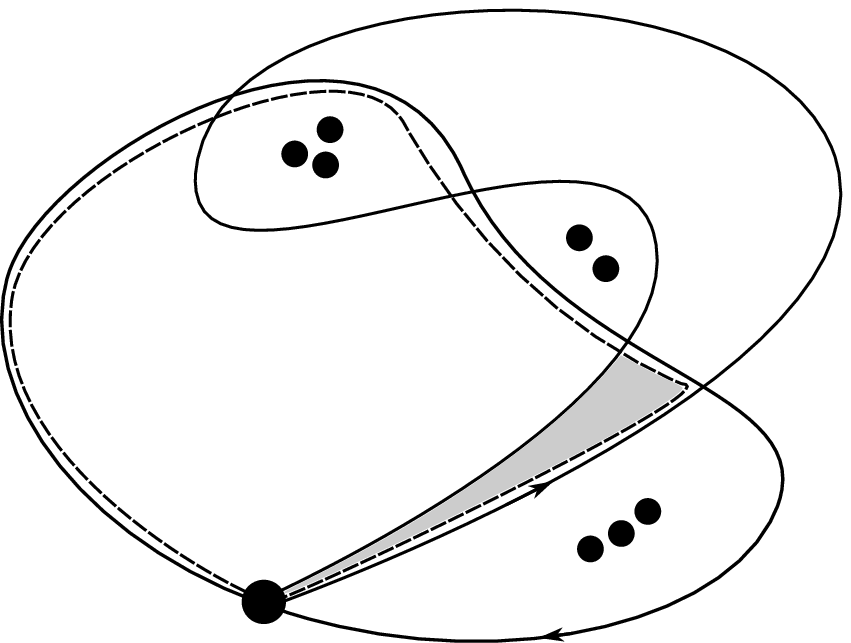}{unicornes}
\begin{figure}[!h]
\labellist
\small\hair 2pt
\pinlabel $b$ at 230 45
\pinlabel $\infty$ at 60 5
\pinlabel $\tilde c$ at 12 100
\pinlabel $a$ at 240 105
\pinlabel $a'$ at 140 30
\pinlabel $a''$ at 137 59
\pinlabel $\pi'$ at 190 90
\pinlabel $\pi$ at 214 73
\pinlabel $K$ at 105 145
\endlabellist
\centering
\includegraphics[scale=0.8]{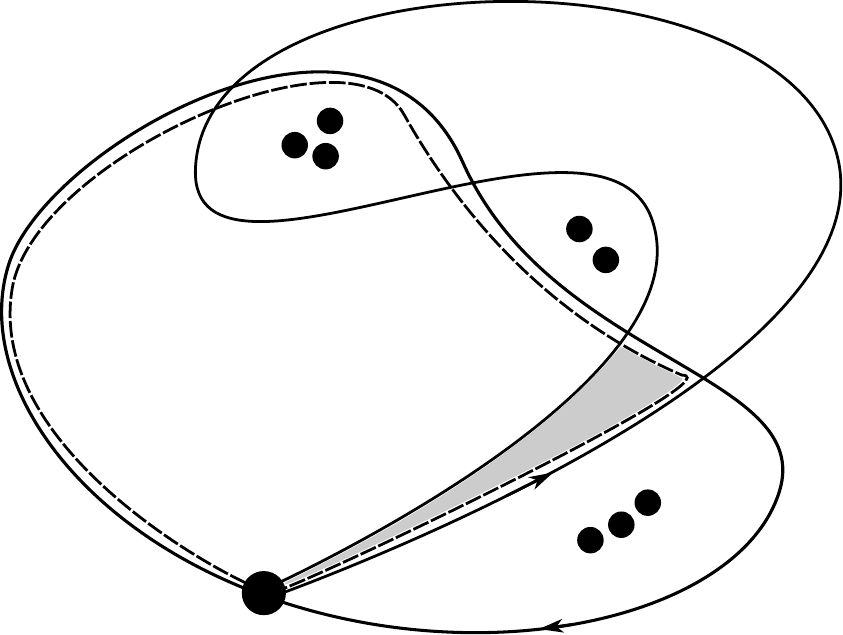}
\vspace{0.3cm}
\caption{Un exemple de deux arcs $a$ et $b$ dans la situation où $a$ et $\tilde c$ (en pointillés) bordent un demi-bigone (grisé). Les points noirs représentent des morceaux de $K$.}
\label{figu:unicornes}
\end{figure}

On note $b''$ la première composante connexe de $b-\pi'$ dans le sens de parcours de $b$. Soit $\overline c := a'' \star b''$. En appliquant à $\overline c$ le même raisonnement que celui appliqué à $\tilde c$, mais en orientant $a$ dans l'autre sens, on obtient que ou bien $\overline c$ et $a$ sont en position minimale d'intersection, ou bien il existe un demi-bigone $\overline c' a'''$, avec $\overline c' \subset \overline c$ et $a''' \subset a$. Mais dans ce dernier cas, on a que $a'''$ est situé sur le début de $a$ (car sur la fin de $a$ orienté à l'envers), d'où $a' \subset a'''$. Comme $\pi'$ est situé avant $\pi$ dans le sens de parcours $b$, on a même $a' \subsetneq a'''$, ce qui contredit le fait que l'intérieur de $a'''$ est disjoint de $b$.
\end{proof}

\begin{proof}[Démonstration du lemme \ref{lemme3.5}]
Si le lemme est vrai pour $i=0$ et $j=n-1$ alors par symétrie il est vrai pour $i=1$ et $j=n$, et donc par récurence il est vrai pour tous $0\leq i \leq j \leq n$. Soit donc $i=0$ et $j=n-1$. On a alors $c_0=a$ et $c_{n-1}=a' \star b'$, où $a'$ intersecte $b$ seulement en son extrémité $\pi$ distincte de l'infini. Soit $\tilde c$ obtenu à partir de $c=c_{n-1}$ comme dans le sous-lemme \ref{ss-lemme-unicornes}. On reprend toutes les notations du sous-lemme \ref{ss-lemme-unicornes}. Si $\tilde c$ est en position minimale d'intersection avec $a$, alors les points de $a \cap b - \{ \pi \}$ qui déterminent des arcs unicornes à partir de $a^+$ et $b^+$ déterminent les mêmes arcs unicornes que ceux réalisés à partir de $a^+$ et $\tilde c^+$, donc le lemme est prouvé dans ce cas.

Sinon, soit $\overline c$ l'arc du sous-lemme \ref{ss-lemme-unicornes}, homotope à $\tilde c$ et en position minimale d'intersection avec $a$ :  les points de $(a \cap b) - \{ \pi , \pi' \}$ qui déterminent des arcs unicornes à partir de $a^+$ et $b^+$ déterminent les mêmes arcs que ceux obtenus à partir de $a^+$ et $\overline c^+$. Soit $a^*=a-a''$. Si $\pi'$ ne détermine pas un arc unicorne à partir de $a^+$ et $b^+$, c'est-à-dire si $a^*$ et $b''$ s'intersectent en dehors de $\pi'$, alors le lemme est montré comme dans le cas précédent.
Sinon, $a^* \star b''=c_1$, puisque c'est le deuxième arc dans la suite des arcs unicornes obtenus à partir de $a^+$ et $b^+$. De plus, comme le sous-arc $\pi \pi'$ de $a$ est dans $a^*$, son intérieur est disjoint de $b''$, donc aussi de $b'$. Ainsi $a^* \star b''$ est juste avant $c$ dans l'ordre des arcs unicornes obtenus à partir de $a^+$ et $b^+$, ce qui signifie que $j=2$, comme voulu.
\end{proof}

\subsubsection*{Hyperbolicité de $X_\infty$}

On peut maintenant déduire des lemmes précédents l'hyperbolicité du graphe considéré.

\begin{prop}\label{prop4.2}
Soit $\mathcal{G}$ un chemin géodésique de $X_\infty$ entre deux sommets $\alpha$ et $\beta$. Alors quelles que soient les orientations choisies sur $\alpha$ et $\beta$, $P(\alpha^+,\beta^+)$ est inclus dans le $6$-voisinage de $\mathcal{G}$.
\end{prop}

\begin{proof}
Soit $\gamma \in P(\alpha^+,\beta^+)$ dont la distance à $\G$ est maximale parmi les éléments de $P(\alpha^+,\beta^+)$. On note $k$ la distance entre $\gamma$ et $\G$ (en particulier, $P(\alpha^+,\beta^+)$ est inclus dans un $k$-voisinage de $\G$). On suppose $k\geq 1$. Si $d(\alpha,\gamma)<2k$, on pose $\alpha':=\alpha$. Sinon, on note $\alpha'$ l'élement le plus proche de $\alpha$ le long de $P(\alpha^+,\beta^+)$ parmi les éléments de $P(\alpha^+,\beta^+)$ à distance $2k$ de $\gamma$. De même, si $d(\beta,\gamma)<2k$, on pose $\beta':=\beta$, et sinon on note $\beta'$ l'élément le proche de $\beta$ le long de $P(\alpha^+,\beta^+)$ parmi les éléments de $P(\alpha^+,\beta^+)$ à distance $2k$ de $\gamma$.
On considère le sous-chemin $\alpha'\beta' \subset P(\alpha^+,\beta^+)$. D'après le lemme \ref{lemme3.5}, $P(\alpha'^+,\beta'^+)$ est un sous-chemin de $P(\alpha^+,\beta^+)$ (on choisit les bonnes orientations sur $\alpha'$ et $\beta'$). Ainsi $\gamma \in P(\alpha'^+,\beta'^+)$ : sinon on est dans le cas $d(\alpha',\beta')=1$, ce qui implique que $\gamma$ est à distance $\leq 1$ de $\alpha$ ou $\beta$, et donc de $\G$.

Soient $\alpha'',\beta'' \in \G$ à distance minimale de $\alpha'$ et $\beta'$ : $d(\alpha'',\alpha')\leq k$ et $d(\beta'',\beta')\leq k$.
Si $\alpha'=\alpha$ ou $\beta'=\beta$, alors $\alpha''=\alpha$ ou $\beta''=\beta$. On a :
$$d(\alpha'',\beta'')\leq d(\alpha'',\alpha')+d(\alpha',\gamma)+d(\gamma,\beta')+d(\beta',\beta'')\leq k+2k+2k+k \leq 6k.$$
Soit $\mathcal{J}$ le chemin de $\alpha'$ à $\beta'$ obtenu en concaténant le sous-chemin $\alpha''\beta''$ de $\mathcal{G}$ avec des chemins géodésiques quelconques entre $\alpha'$ et $\alpha''$, et entre $\beta'$ et $\beta''$. On note $ \xi_0\xi_1...\xi_m$ les sommets de $\mathcal{J}$, et on a $m\leq 8k$. D'après le corollaire \ref{lemme4.3}, il existe $i$ tel que $d(\gamma,\xi_i) \leq E( log_2 8k) $, où $E$ est la fonction partie entière supérieure.

Si $\xi_i \notin \G$, disons $\xi_i \in \alpha \alpha'$, alors on est dans le cas où $d(\gamma,\alpha')=2k$, et donc $d(\gamma,\xi_i) \geq d(\gamma,\alpha') - d(\alpha',\xi_i) \geq k$, d'où $E( log_2 8k) \geq k$. Sinon, si $\xi_i \in \G$, on a directement $E( log_2 8k) \geq k$, cette fois par définition de $k$. Finalement, on a toujours $E( log_2 8k) \geq k$, et donc $k\leq 6$. \end{proof}

\begin{corollaire}\label{coro++}
Soit $\mathcal G$ une géodésique de $X_\infty$ entre deux sommets $\alpha$ et $\beta$. Alors quelles que soient les orientations choisies sur $\alpha$ et $\beta$, $\mathcal{G}$ est incluse dans le $13$-voisinage de $P(\alpha^+,\beta^+)$.
\end{corollaire}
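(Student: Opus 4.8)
The plan is to establish this as the converse half of Proposition~\ref{prop4.2} by a coarse nearest-point projection argument: once we know that the unicorn path from $\alpha$ to $\beta$ stays within distance $6$ of the geodesic $\mathcal G$, a coarse intermediate value argument forces $\mathcal G$ in turn to stay close to that unicorn path. Since this is exactly the easy direction in the Masur--Minsky style dichotomy ``unicorn paths are uniformly Hausdorff-close to geodesics'', there is no genuine obstacle here; the only thing to be careful with is the bookkeeping of the constant.

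Concretely, write $P(\alpha^+,\beta^+)=(\gamma_0,\dots,\gamma_n)$ with $\gamma_0=\alpha$ and $\gamma_n=\beta$. For each $i$ I would pick a vertex $\pi(\gamma_i)\in\mathcal G$ realizing the distance $d(\gamma_i,\mathcal G)$; applying Proposition~\ref{prop4.2} to the geodesic $\mathcal G$ itself gives $d(\gamma_i,\pi(\gamma_i))\le 6$. By the Fait recalled above, consecutive terms of a unicorn path are homotopically disjoint, so $d(\gamma_i,\gamma_{i+1})\le 1$ and hence $d(\pi(\gamma_i),\pi(\gamma_{i+1}))\le 6+1+6=13$. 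Moreover $\pi(\gamma_0)=\alpha$ and $\pi(\gamma_n)=\beta$, since $\alpha,\beta\in\mathcal G$.

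Now fix an arbitrary vertex $v\in\mathcal G$ and set $t:=d(\alpha,v)$. Because $\mathcal G$ is geodesic, one has $d(x,y)=|d(\alpha,x)-d(\alpha,y)|$ for all $x,y\in\mathcal G$, so it is enough to locate an index $i$ with $|d(\alpha,\pi(\gamma_i))-t|$ small. Consider the integer sequence $a_i:=d(\alpha,\pi(\gamma_i))$: it satisfies $a_0=0\le t$, $a_n=d(\alpha,\beta)\ge t$, and $|a_{i+1}-a_i|\le 13$ for all $i$. Let $i_0$ be minimal with $a_{i_0}\ge t$. If $i_0=0$ then $t=0$ and $d(v,\gamma_0)=d(v,\alpha)=0$; otherwise $a_{i_0-1}<t\le a_{i_0}$ with $a_{i_0}-a_{i_0-1}\le 13$, so $\min(t-a_{i_0-1},\,a_{i_0}-t)\le 6$. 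In either case there is $j\in\{i_0-1,i_0\}$ with $d(v,\pi(\gamma_j))=|a_j-t|\le 6$, and therefore $d(v,\gamma_j)\le d(v,\pi(\gamma_j))+d(\pi(\gamma_j),\gamma_j)\le 6+6=12\le 13$. Since $\gamma_j\in P(\alpha^+,\beta^+)$ and $v\in\mathcal G$ was arbitrary, this shows $\mathcal G\subset N_{13}\big(P(\alpha^+,\beta^+)\big)$, and as Proposition~\ref{prop4.2} holds for every choice of orientations on $\alpha$ and $\beta$, so does the present conclusion.

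As indicated, I do not expect any serious difficulty: the whole argument is a direct consequence of Proposition~\ref{prop4.2} together with the fact that a unicorn path is an honest edge-path of $X_\infty$. The only step demanding a little attention is the intermediate value estimate controlling how close the projected integer sequence $(a_i)$ is forced to come to $t$ given jumps of size at most $13$ — and that is completely elementary.
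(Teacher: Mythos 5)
Your proof is correct, and it in fact yields the slightly sharper constant $12$. The overall strategy coincides with the paper's: both deduce the corollary from Proposition \ref{prop4.2} together with a coarse converse of the form \og si un chemin entre les extrémités d'une géodésique reste dans un $k$-voisinage de celle-ci, alors la géodésique reste dans un voisinage de taille environ $2k$ du chemin\fg. Where you differ is in how this converse is established. The paper isolates it as a standalone lemma valid in any geodesic space and proves it by a crossing argument: it takes a subsegment $\mathcal G'$ of $\mathcal G$ lying at distance $>k$ from the path, locates the first point $\zeta$ of the path coming within $k$ of the part of $\mathcal G$ beyond $\mathcal G'$, and uses the predecessor $\zeta'$ of $\zeta$ to trap $\mathcal G'$ inside a subsegment of $\mathcal G$ of length at most $2k+1=13$. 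You instead project each vertex $\gamma_i$ of the unicorn path to a nearest point of $\mathcal G$, note that the projected distances $a_i=d(\alpha,\pi(\gamma_i))$ start at $0$, end at $d(\alpha,\beta)$ and jump by at most $13$, and run a discrete intermediate value argument to find $j$ with $d(v,\pi(\gamma_j))\le 6$, hence $d(v,\gamma_j)\le 12$. Both routes are elementary and of comparable length; yours gives a marginally better constant and avoids having to orient and traverse the auxiliary path, while the paper's lemma is a reusable general statement. One bookkeeping point: your bound $\min(t-a_{i_0-1},\,a_{i_0}-t)\le 6$ uses that $t$ and the $a_i$ are integers, which is legitimate provided $v$ and the projections $\pi(\gamma_i)$ are taken to be vertices of $\mathcal G$ (as you implicitly do); without integrality the bound would only weaken to $13/2$, still giving a final estimate at most $13$.
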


C'est une conséquence de la proposition \ref{copies} et du lemme suivant :
\begin{lemme} Soit $X$ un espace géodésique. Soit $\mathcal G$ une géodésique de $X$ entre deux points $\alpha$ et $\beta$. Soit $k$ un entier positif. Si $\mathcal J$ est un chemin de $X$ entre $\alpha$ et $\beta$ qui reste dans un $k$-voisinage de $\mathcal G$, alors $\mathcal G$ reste dans un $(2k+1)$-voisinage de $\mathcal J$.
\end{lemme}

\begin{proof} [Démonstration du lemme.] Soit $\mathcal G'$ un sous-segment de $\mathcal G$ tel que pour tout $\gamma' \in \mathcal G'$, pour tout $\xi \in \mathcal J$, on a $d(\gamma',\xi)>k$. On montre que tous les points de $\mathcal G'$ sont à distance au plus $(2k+1)$ de $\mathcal J$. On oriente $\mathcal G$ et $\mathcal J$ de $\alpha$ vers $\beta$. L'ensemble $\mathcal G - \mathcal G'$ a deux composantes connexes : on note $\mathcal G_1$ celle située avant $\mathcal G'$ (lorsque l'on parcourt $\mathcal G$ de $\alpha$ vers $\beta$), et $\mathcal G_2$ la deuxième. On a $d(\alpha,\G_2)>k$, sinon $\G'$ est dans le $k$-voisinage de $\beta \in \mathcal J$. Soit $\zeta$ le premier point de $\mathcal J$ (dans le sens de parcours de $\mathcal J$) tel que $d(\zeta,\mathcal G_2)\leq k$.
Soit $\gamma_2 \in \mathcal G_2$ tel que $d(\zeta,\gamma_2)\leq k$. Soit $\zeta' \in \mathcal J$ à distance $1$ de $\zeta$ et situé avant $\zeta$ sur $\mathcal J$. Alors par définition de $\zeta$ et par hypothèse sur $\G'$, il existe $\gamma_1 \in \mathcal G_1$ tel que $d(\zeta',\gamma_1)\leq k$. Ainsi, comme $\mathcal G$ est une géodésique, le segment de $\mathcal G$ entre $\gamma_1$ et $\gamma_2$ est de longueur inférieure ou égal à $2k+1$ et contient $\mathcal G'$. On en déduit que tous les points de $\mathcal G'$ sont à distance au plus $2k+1$ de $\mathcal J$.
\end{proof}

\begin{prop} \label{G_infini hyp}
Le graphe $X_\infty$ est $20$-hyperbolique, au sens de Gromov.
\end{prop}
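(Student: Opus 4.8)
Le plan est de combiner la $1$-finesse des triangles unicornes (lemme \ref{slim unicorn}) avec les deux estimées de distance de Hausdorff entre géodésiques et chemins unicornes (proposition \ref{prop4.2} et corollaire \ref{coro++}), exactement comme dans la déduction du théorème $1.2$ à partir des lemmes précédents dans \cite{HPW}. Comme $X_\infty$ est un graphe connexe dont toutes les arêtes sont de longueur $1$, c'est un espace métrique géodésique ; il suffit donc d'exhiber une constante $\delta$ telle que, pour tout triangle géodésique de $X_\infty$, chaque côté soit inclus dans le $\delta$-voisinage des deux autres.

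On fixe trois sommets $\alpha$, $\beta$, $\delta$ de $X_\infty$ et des côtés géodésiques $\mathcal G_{\alpha\beta}$, $\mathcal G_{\beta\delta}$, $\mathcal G_{\delta\alpha}$ les reliant, ainsi qu'une orientation arbitraire sur chacun de $\alpha$, $\beta$, $\delta$, fixée une fois pour toutes. C'est licite car la proposition \ref{prop4.2}, le corollaire \ref{coro++} et le lemme \ref{slim unicorn} valent pour des orientations quelconques ; le seul point demandant un peu d'attention est de garder la même orientation sur un sommet donné dans tous les chemins unicornes où il apparaît.

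Soit alors $x$ un point d'un côté, disons $x \in \mathcal G_{\alpha\beta}$. On le transporte en trois étapes. D'après le corollaire \ref{coro++}, $\mathcal G_{\alpha\beta}$ est incluse dans le $13$-voisinage du chemin unicorne $P(\alpha^+,\beta^+)$, donc il existe $\gamma \in P(\alpha^+,\beta^+)$ avec $d(x,\gamma) \leq 13$. D'après le lemme \ref{slim unicorn} appliqué à $\alpha^+$, $\beta^+$, $\delta^+$, il existe $\gamma^* \in P(\alpha^+,\delta^+) \cup P(\delta^+,\beta^+)$ avec $d(\gamma,\gamma^*) \leq 1$. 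Enfin, d'après la proposition \ref{prop4.2} appliquée à la géodésique $\mathcal G_{\alpha\delta}$ ou $\mathcal G_{\delta\beta}$ (selon celui des deux chemins unicornes qui contient $\gamma^*$), l'élément $\gamma^*$ est à distance au plus $6$ de ce côté géodésique. On obtient $d(x, \mathcal G_{\alpha\delta} \cup \mathcal G_{\delta\beta}) \leq 13 + 1 + 6 = 20$. Par symétrie du rôle des trois côtés, tout triangle géodésique de $X_\infty$ a chacun de ses côtés inclus dans le $20$-voisinage des deux autres, ce qui est exactement l'énoncé que $X_\infty$ est $20$-hyperbolique.

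Je ne m'attends pas à de réelle difficulté ici : tout le contenu géométrique est déjà encapsulé dans les trois résultats cités, et il ne reste que l'arithmétique des constantes ($13$, $1$, $6$) et la gestion des orientations. Le seul piège serait d'oublier qu'un sommet peut apparaître avec des orientations opposées dans deux chemins unicornes distincts et d'appliquer alors le lemme \ref{slim unicorn} dans une configuration où il ne s'applique pas ; on l'évite en fixant les orientations dès le départ.
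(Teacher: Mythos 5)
Votre preuve est correcte et suit exactement la même démarche que celle du texte : corollaire \ref{coro++} (constante $13$), puis lemme \ref{slim unicorn} (constante $1$), puis proposition \ref{prop4.2} (constante $6$), pour un total de $20$. Seules les notations diffèrent, et votre remarque sur la cohérence des orientations est une précision bienvenue mais non essentielle.
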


\begin{proof}
Soit $\alpha \beta \gamma$ un triangle géodésique de $X_\infty$. Soit $\zeta$ sur la géodésique entre $\alpha$ et $\beta$. On choisit des orientations sur $\alpha$, $\beta$ et $\gamma$. D'après le corollaire \ref{coro++}, il existe $\xi$ sur $P(\alpha^+,\beta^+)$ tel que $d(\zeta,\xi) \leq 13$. D'après le lemme \ref{slim unicorn}, il existe $\xi^* \in P(\alpha^+,\gamma^+) \cup P(\gamma^+,\beta^+)$ tel que $d(\xi,\xi^*)\leq 1$. D'après la proposition \ref{prop4.2}, il existe $\zeta^*$ sur un des côtés géodésiques du triangle joignant $\alpha$ à $\gamma$ ou $\gamma$ à $\beta$, tel que $d(\xi^*,\zeta^*)\leq 6$. On a donc $d(\zeta,\zeta^*)\leq 20$, d'où le résultat. \end{proof}

\subsection{Quasi-isométrie entre $X_r$ et $X_\infty$} \label{def-q.i.}

On cherche à déduire l'hyperbolicité du graphe des rayons $X_r$ à partir de celle du graphe $X_\infty$. Pour arriver à cette conclusion, on montre que ces deux graphes sont quasi-isométriques.

\subsubsection*{Rappels de géométrie \og à grande échelle\fg}
On utilise les résultats suivants (voir par exemple \cite{Bridson-Haefliger}).

\begin{definition}[Quasi-isométrie] Soient $X$ et $X'$ deux espaces métriques. Une application $f : X' \rightarrow X$ est un  \emph{plongement $(\kappa,\varepsilon)$-quasi-isométrique} s'il existe $\kappa \geq 1$ et $\varepsilon \geq 0$ tels que pour tous $x,y \in X'$ :
$${1\over \kappa}d(x,y)-\varepsilon \leq d(f(x),f(y))\leq \kappa d(x,y) +\varepsilon.$$

Si de plus il existe $C \geq 0$ tel que tout point de $X$ est dans le $C$-voisinage de $f(X')$, on dit que $f$ est une  \emph{$(\kappa,\varepsilon)$-quasi-isométrie}.

Quand une telle application existe, on dit que $X$ et $X'$ sont  \emph{quasi-isométriques}.
\end{definition}

\paragraph{Remarque :} Le fait d'être quasi-isométriques est une relation d'équivalence.

\begin{definition}[Quasi-géodésique]
Une \emph{$(\kappa,\varepsilon)$-quasi-géodésique} d'un espace métrique $X$ est un plongement $(\kappa,\varepsilon)$-quasi-isométrique d'un intervalle de $\R$ dans $X$. Par abus de langage, on appelle quasi-géodésique tout image dans $X$ d'un tel plongement.
\end{definition}

\begin{theo*}[Lemme de Morse, voir par exemple \cite{Bridson-Haefliger}, th. $1.7$ p.$401$] \label{Morse} Soit $X$ un espace métrique $\delta$-hyperbolique. Pour tous $k, \varepsilon$ réels positifs, il existe une constante universelle $B$ dépendant uniquement de $\delta$, $k$ et $\varepsilon$, telle que tout segment $(\kappa,\varepsilon)$-quasi-géodésique est dans le $B$-voisinage de toute géodésique joignant ses extrémités.
\end{theo*}

 On dira que $B$ est la \emph{constante de Morse} de l'espace $X$.

\begin{theo*}[voir par exemple \cite{Bridson-Haefliger}, th.$1.9$ p.$402$ ]
\label{bridson}
Soient $X$ et $X'$ deux espaces métriques géodésiques et soit $f:X'\rightarrow X$ un plongement quasi-isométrique. Si $X$ est hyperbolique, alors $X'$ est hyperbolique.
\end{theo*}

\subsubsection*{Quasi-isométrie entre $X_r$ et $X_\infty$}

D'après la proposition \ref{G_infini hyp}, on sait que $X_\infty$ est un espace hyperbolique. Pour montrer l'hyperbolicité du graphe des rayons $X_r$, on cherche maintenant à montrer qu'il existe un plongement quasi-isométrique de $X_r$ dans $X_\infty$, ce qui nous permettra de conclure grâce au théorème énoncé ci-dessus. On montre un peu plus, à savoir que le plongement choisi est une quasi-isométrie.

On définit une application $f:X_r \rightarrow X_\infty$ qui à ${x} \in X_r$ associe n'importe quel $\hat {x} \in X_\infty$ tel que ${x}$ et $\hat {x}$ sont homotopiquement disjoints sur $\Sph^2-{(K\cup \{\infty\}})$.

\begin{prop}\label{prop quasi-isom}
L'application $f$ est une quasi-isométrie.
\end{prop}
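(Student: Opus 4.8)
The goal is to show that the map $f : X_r \to X_\infty$ sending a ray $x$ to some loop $\hat x$ disjoint from $x$ is a quasi-isometry. There are three things to check: $f$ is coarsely Lipschitz (an upper bound $d(f(x),f(y)) \leq \kappa\, d(x,y) + \e$), $f$ is a quasi-isometric embedding (a lower bound $\frac{1}{\kappa} d(x,y) - \e \leq d(f(x),f(y))$), and $f$ has coarsely dense image (every vertex of $X_\infty$ is within bounded distance of $f(X_r)$). The key geometric observation to exploit is that a ray $x$ and a loop $\hat x$ disjoint from it are tightly linked: from a loop $\ell \in X_\infty$ one can build a ray by following $\ell$ from $\infty$ and then puncturing out to a nearby point of $K$, and conversely from a ray one thins a tubular neighborhood of it into a loop around $\infty$. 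I expect the proof to construct an explicit coarse inverse $g : X_\infty \to X_r$ and show that $f$ and $g$ move points a bounded amount and are coarse inverses of each other.

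First I would prove the upper bound. If $d_{X_r}(x,y) = 1$, i.e. $x$ and $y$ have disjoint representatives, I want to bound $d_{X_\infty}(\hat x, \hat y)$. The point is that $\hat x$ can be taken inside an arbitrarily small neighborhood of $x$ (it traces along $x$ and around $\infty$), and similarly $\hat y$ near $y$; but $x$ and $y$ disjoint does not immediately give $\hat x, \hat y$ disjoint, since both neighborhoods touch $\infty$. However, one can interpose a bounded-length path in $X_\infty$: a neighborhood of $x \cup y$ is a surface with boundary, and the relevant arc/loop graph distances there are bounded by a universal constant. So there is a constant $N$ with $d_{X_\infty}(\hat x, \hat y) \leq N$ whenever $d_{X_r}(x,y) \leq 1$, and then concatenating along a geodesic of $X_r$ gives $d_{X_\infty}(f(x),f(y)) \leq N\, d_{X_r}(x,y)$.

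Next the lower bound, which is the symmetric statement: if $d_{X_\infty}(\hat x, \hat y) = 1$ then $d_{X_r}(x,y)$ is bounded. Here, given a loop $\ell$ disjoint from $\ell'$ in $X_\infty$, one produces from $\ell$ a ray $r(\ell)$ (follow $\ell$, then escape to a point of $K$ in a complementary region; the choice of which complementary component and which point of $K$ is where care is needed — one must pick a point of $K$ adjacent to $\ell$) disjoint from $\ell$, hence at distance $1$ from $\ell$ in the bipartite sense; then show $d_{X_r}(r(\ell), x)$ is bounded using that both are disjoint from $\ell$, and likewise on the $\ell'$ side, and that $d_{X_r}(r(\ell), r(\ell'))$ is bounded since $\ell,\ell'$ are disjoint. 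Assembling, $d_{X_r}(x,y) \leq N' d_{X_\infty}(\hat x,\hat y)$ for a universal $N'$. The coarse surjectivity is then essentially the statement that $r(\ell)$ is within bounded distance of the image of $f$, or more simply that $\hat{\,r(\ell)\,}$ is within bounded distance of $\ell$, which follows from the lower-bound argument applied with $\ell' = \hat{\,r(\ell)\,}$.

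The main obstacle I anticipate is the careful bookkeeping of "escaping to a point of $K$": unlike the finite-type case, the Cantor set is not a single puncture, so turning a loop around $\infty$ into a ray requires choosing a complementary region of the loop that contains points of $K$ and a specific point of $K$ to attach to, and then one must verify that different admissible choices give rays at bounded distance from one another, and that the whole construction is coarsely well-defined independent of all the choices (of $\hat x$, of representatives, of small neighborhoods). All the individual bounds are "surface with boundary of bounded complexity" statements — genuinely uniform because the relevant neighborhoods are always planar surfaces with few boundary components — so the work is in organizing the constants rather than in any deep new idea. Once the upper and lower bounds and coarse density are in hand, Proposition~\ref{prop quasi-isom} follows, and combined with Proposition~\ref{G_infini hyp} and the quasi-isometry invariance of hyperbolicity (the theorem cited after the Morse lemma) this yields the hyperbolicity of $X_r$.
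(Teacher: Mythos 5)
Your plan follows essentially the same route as the paper, which proves the additive bounds $d(x,y)-2 \le d(f(x),f(y)) \le d(x,y)+4$ by pushing a geodesic of $X_\infty$ into $X_r$ (choosing a ray in the complementary component of each intermediate loop, two rays disjoint from a common essential loop being at distance $\le 2$) and a geodesic of $X_r$ into $X_\infty$ (doubling each ray around a small clopen neighbourhood of its attachment point, with the perturbation of coinciding attachment points that you anticipate) — exactly your two chaining arguments. One small correction: coarse density of the image does not follow from the lower bound applied to $\ell'=f(r(\ell))$ (that inequality bounds $d_{X_r}$ by $d_{X_\infty}$, i.e.\ it goes the wrong way); it is the separate, easy fact that two essential loops disjoint from a common ray are at distance at most $2$ in $X_\infty$ (the paper's lemme \ref{lemme X_r X_infty}, proved by finding an essential loop in the intersection of the two complementary disks, which contains points de $K$ near the attachment point) — a fact you need anyway for your upper bound, since $f$ involves an arbitrary choice of $\hat x$.
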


\begin{lemme} \label{retour}
Soient $\hat {x}, \hat {y} \in X_\infty$ et ${x},{y} \in X_r$ tels que ${x}$ (respectivement ${y}$) est homotopiquement disjoint de $\hat {x}$ (respectivement de $\hat {y}$).
Alors : $$d({x},{y})\leq d(\hat {x},\hat {y})+2.$$
\end{lemme}

\paragraph{Remarque :} En particulier, on note que ce lemme implique que pour tous $x,y \in X_r$, $d(x,y)-2\leq d(f(x),f(y))$.

\begin{proof}
On note $n:=d(\hat {x},\hat {y})$. Soit $(\hat \mu_j)_{0\leq j\leq n}$ une géodésique dans $X_\infty$ entre $\hat {x}$ et $\hat {y}$ (en particulier, $\hat \mu_0 = \hat {x}$ et $\hat \mu_n=\hat {y}$). On va construire un chemin $(\mu_1,...,\mu_{n-1})$ de longueur $(n-1)$ dans $X_r$, puis montrer que $d(x,\mu_1)\leq 2$ et $d(\mu_{n-1},y)\leq 2$. Pour tout élément $\alpha$ de $X_r$ ou $X_\infty$, on note toujours $\alpha_\#$ le représentant géodésique de $\alpha$.

Comme $(\hat \mu_i)_i$ est une géodésique de $X_\infty$, pour tout $1\leq i \leq n-1$, $(\hat \mu_i)_\#$ est disjointe de $(\hat \mu_{i-1})_\#$ et $(\hat \mu_{i+1})_\#$ (sauf en $\{\infty\}$), et $(\hat \mu_{i-1})_\#$ et $(\hat \mu_{i+1})_\#$ s'intersectent ailleurs qu'en l'infini. Ainsi $(\hat \mu_i)_\#$ sépare la sphère $\Sph^2$ en deux composantes connexes, dont l'une contient $(\hat \mu_{i-1})_\#$ et $(\hat \mu_{i+1})_\#$. On note $A_i$ l'autre composante connexe. On remarque que pour tout $1\leq i \leq n-2$, $A_i$ est disjointe de $A_{i+1}$. Pour tout $1\leq i \leq n-1$, on choisit un rayon $\mu_i$ tel que $(\mu_i)_\#$ est inclus dans $A_i$ (un tel $\mu_i$ existe car les $\hat \mu_i$ sont des courbes essentielles). On a donc construit un chemin $(\mu_i)_{1\leq i\leq n-1}$ de longueur $(n-1)$ dans $X_r$.

Montrons maintenant que $d(x,\mu_1)\leq 2$ : si $(\mu_1)_\#$ intersecte $x_\#$, alors $x_\#$ est dans la composante connexe de $\Sph^2 - \hat x_\#$ qui contient $(\hat \mu_1)_\#$ et $(\mu_1)_\#$. Tout représentant de rayon inclus dans l'autre composante connexe de $\Sph^2 - \hat x_\#$ n'intersecte ni $(\mu_1)_\#$, ni $x_\#$ : ainsi $d(x,\mu_1)\leq 2$.
On montre de même que $d(\mu_{n-1},y)\leq 2$.\end{proof}

\begin{lemme}\label{lemme X_r X_infty}
Soit $\hat x \in X_\infty$. Soit $x \in X_r$ homotopiquement disjoint de $\hat x$. Alors : $$d(f(x),\hat x) \leq 2.$$
\end{lemme}

\begin{proof}
On note toujours $x_\#$ et $\hat x_\#$ les représentants géodésiques de $x$ et $\hat x$, qui sont disjoints (sauf en l'infini). Comme $x_\#$ est disjoint de $\hat x_\#$, il existe un disque topologique ouvert $\mathcal{U}$ de $\Sph^2$ contenant $x_\#-\{ \infty\}$ et disjoint de $\hat x_\# -\{ \infty\}$. De même, comme $f(x)_\#$ est disjoint de $x_\#$, on a un disque topologique ouvert $\mathcal{V}$ contenant $x_\#-\{ \infty\}$ et disjoint de $f(x)_\# -\{ \infty\}$. Ainsi $\mathcal{U} \cap \mathcal{V}$ contient un disque topologique ouvert contenant $x_\#-\{ \infty\}$ et disjoint de $(\hat x_\# \cup f(x)_\#) -\{ \infty\}$. En particulier, $\mathcal{U} \cap \mathcal{V}$ contient des points de $K$, puisqu'il contient le point d'attachement de $x_\#$. Il existe donc $\hat y_0 \subset (\mathcal{U} \cap \mathcal{V}) - K$ une courbe simple de $\Sph_2$ passant par $\infty$, ayant pour classe d'isotopie l'élément $\hat y \in X_\infty$. Finalement, $d(\hat y,\hat x)=d(\hat y ,f(x))=1$, d'où le résultat.
\end{proof}

\begin{lemme} \label{quasi-iso}
Pour tous $x,y \in X_r$, on a : $$ d(f(x),f(y))\leq d(x,y)+4.$$
\end{lemme}

\begin{proof}
Soient $x,y \in X_r$ et $n=d(x,y)$.
Si $x$ et $y$ n'ont pas le même point d'attachement, on choisit une géodésique $(\gamma_i)_{0\leq i \leq n}$ de $X_r$ entre $x$ et $y$, telle que pour tout $i,j$, les éléments $\gamma_i$ et $\gamma_j$ n'ont pas le même point d'attachement. Des tels $\gamma_i$ existent quitte à changer certains points d'attachement pour un point voisin de $K$ sans ajouter de point d'intersection avec les autres $\gamma_k$. Si $x$ et $y$ ont le même point d'attachement, on choisit pour $\gamma_n$ un rayon homotopiquement disjoint de $y$ et de $f(y)$ et qui n'a pas le même point d'attachement que $x$, puis on choisit une géodésique $(\gamma_i)_{0\leq i \leq n}$ de $X_r$ entre $x=\gamma_0$ et $\gamma_n$.

Autour des points d'attachement des rayons $\gamma_i$, on choisit maintenant des petits voisinages deux à deux disjoints et tels que chaque voisinage intersecte un unique rayon, qui vient s'attacher à un point contenu dans le voisinage. Si $y \neq \gamma_n$, on choisit de plus $\hat \gamma_n$ disjoint de $y$. On définit alors pour chacun des rayons $\gamma_i$ une courbe $\hat \gamma_i$ de la manière suivante : on parcours $\gamma_i$ jusqu'au voisinage choisi, on parcourt le bord du voisinage choisi puis on reparcourt $\gamma_i$ dans l'autre sens. On obtient ainsi un élément de $X_\infty$.

Par construction, pour tout $i$ entre $2$ et $n-1$, on a $d(\hat \gamma_{i-1},\hat \gamma_i)=d(\gamma_i,\gamma_{i+1})=1$.
D'après le lemme \ref{lemme X_r X_infty} appliqué à $\hat \gamma_0$ disjoint de $x=\gamma_0$ et à $\hat \gamma_n$ disjoint de $y$, on obtient $d(\hat \gamma_0,f(x))\leq 2$ et $d(\hat \gamma_n,f(y))\leq 2$.

Finalement on a $d(f(x),f(y))\leq n+4$.
\end{proof}

\paragraph{Fin de la preuve de la proposition \ref{prop quasi-isom} :}
Les lemmes \ref{retour} (pour la première inégalité) et \ref{quasi-iso} (pour la deuxième inégalité) nous donnent : $d(x,y) -2 \leq d(f(x),f(y)) \leq d(x,y)+4$.\\
Le lemme \ref{lemme X_r X_infty} nous donne la constante $C=2$ telle que tout $\hat x$ de $X_\infty$ soit dans un $C$-voisinage de $f(X_r)$, ce qui termine la preuve de la proposition \ref{prop quasi-isom}. \qed

\subsubsection*{Hyperbolicité du graphe des rayons}

Finalement on a montré le théorème suivant :
\begin{theo}\label{theo-hyperbolique}
Le graphe des rayons est hyperbolique au sens de Gromov.
\end{theo}

\begin{proof}
C'est une conséquence de la proposition \ref{prop quasi-isom} (il existe un plongement quasi-isométrique de $X_r$ dans $X_\infty$), de la proposition \ref{G_infini hyp} ($X_\infty$ est hyperbolique) et du théorème \ref{bridson} (si $f:X\rightarrow X'$ est un plongement quasi-isométrique et $X'$ est hyperbolique, alors $X$ est hyperbolique).
\end{proof}

\section{Quasi-morphismes non triviaux}\label{section-qm}

Dans \cite{Bestvina-Fujiwara}, Mladen Bestvina et Koji Fujiwara montrent que l'espace des quasi-morphismes non triviaux sur le groupe modulaire d'une surface compacte est de dimension infinie. Ils montrent d'abord (théorème $1$ de \cite{Bestvina-Fujiwara}) que si un groupe $G$ agit par isométries sur un espace hyperbolique $X$, alors sous la condition d'existence d'éléments hyperboliques qui vérifient certaines propriétés dans $G$, l'espace des quasi-morphismes non triviaux sur ce groupe est de dimension infinie. Dans un deuxième temps, ils montrent que si l'action de $G$ sur $X$ est \og faiblement proprement discontinue\fg, alors il existe des éléments qui vérifient les conditions du théorème $1$, puis ils montrent que les groupes modulaires de surfaces compactes agissent proprement faiblement discontinûment sur les complexes de courbes associés. \\

Dans le cas du groupe $\Gamma$ qui nous intéresse, l'action considérée sur l'espace $X_r$ n'est pas faiblement proprement discontinue. On dit qu'un élément $g$ d'un groupe $G$ agit proprement faiblement discontinûment sur un espace hyperbolique $X$ si pour tout $x \in X$, pour tout $C >0$, il existe $N>0$ tel que le nombre de $\sigma \in G$ vérifiant $d(x,\sigma x) \leq C$ et $d(g^N x,\sigma g^N x)\leq C $ est fini (voir par exemple \cite{SCL} p74).

\paragraph{Fait :}  \label{rq-WPD} \textit{Pour tout $g\in \Gamma$, l'action de $g$ sur le graphe des rayons $X_r$ n'est pas proprement faiblement discontinue.}

 En effet, pour tout $x \in X_r$, pour tout $N\in \N$, il existe une infinité de $\sigma \in MCG(\R^2-Cantor)$ tels que $d(x,\sigma x) \leq 1$ et $d(g^N x,\sigma g^N x)\leq 1$ : on considère un voisinage $\mathcal{U}$ d'un point du Cantor tel que $\mathcal{U}$  est disjoint de $x$ et de $g^N x$, alors tout $\sigma$ à support dans $\mathcal{U}$  fixe $x$ et $g^N x$ donc vérifie $d(x,\sigma x) \leq 1$ et $d(g^N x,\sigma g^N x)\leq 1$. De plus il y a une infinité de tels $\sigma$ car il y a une infinité de points de l'ensemble de Cantor dans $\mathcal{U}$.

  La stratégie de \cite{Bestvina-Fujiwara} ne s'applique donc pas entièrement, mais on peut trouver explicitement des éléments de $\Gamma$ qui vérifient les hypothèses du théorème $1$ de \cite{Bestvina-Fujiwara}, ce qui nous permet de montrer que l'espace des quasi-morphismes non triviaux sur $\Gamma$ est de dimension infinie.\\

On commence par trouver un élément $h\in \Gamma$ qui agit par translation sur l'axe des $(\alpha_k)_{k}$ défini précédemment. On montre ensuite, en utilisant le \og nombre d'intersections positives\fg, que si $w$ est un sous-segment suffisamment long de cet axe, alors pour tout $g\in \Gamma$, $g$ ne peut pas retourner ce segment dans un voisinage proche de l'axe (proposition \ref{copies}). On utilisera enfin cette proposition d'une part pour construire un quasi-morphisme non trivial explicite sur $\Gamma$ et d'autre part pour construire des éléments de $\Gamma$ qui vérifient les conditions du théorème $1$ de \cite{Bestvina-Fujiwara}.

\subsection{Un élément de $\Gamma$ qui agit par translation sur un axe géodésique infini du graphe des rayons}

On cherche à définir un élément hyperbolique $h \in \Gamma$ comme sur la figure \ref{figu:h}, où chaque brin envoie un sous-ensemble de Cantor sur un autre, de telle sorte que cet élément envoie $\alpha_k$ sur $\alpha_{k+1}$ pour tout $k \in \N$ (voir figure \ref{figu:action-h}).

%figure
\begin{figure}[!h]
\labellist
\small\hair 2pt
\pinlabel $K_{-4}$ at 4 204
\pinlabel $K_{-3}$ at 56 204
\pinlabel $K_{-2}$ at 106 204
\pinlabel $K_{-1}$ at 156 204
\pinlabel $K_0$ at 264 204
\pinlabel $K_1$ at 312 204
\pinlabel $K_2$ at 362 204
\pinlabel $K_3$ at 413 204
\pinlabel $\infty$ at 211 204
\pinlabel $h$ at 465 110
\endlabellist
\centering
\vspace{0.4cm}
\includegraphics[scale=0.5]{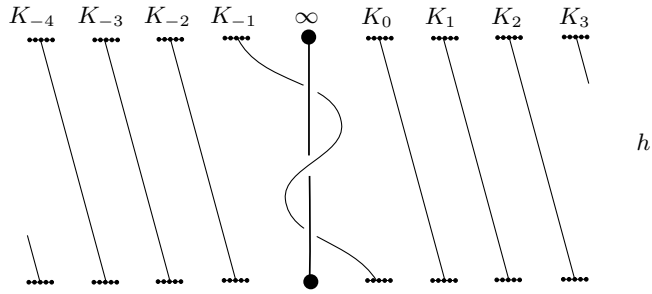}
\caption{Représentation de l'élément $h \in \Gamma$.}
\label{figu:h}
\end{figure}

Comme $(\alpha_k)_{k\in \N}$ est un demi-axe géodésique (d'après la proposition \ref{alpha_k geod}), on a alors que $(h^n(\alpha_0))_{n\in \Z}$ est un axe géodésique du graphe des rayons, sur lequel l'élément $h$ agit par translation.

%figure
\begin{figure}[!h]
\labellist
\small\hair 2pt
\pinlabel $K_{-1}$ at 155 317
\pinlabel $K_0$ at 264 317
\pinlabel $K_1$ at 314 317
\pinlabel $\infty$ at 214 317
\pinlabel $h$ at 451 256
\pinlabel $h$ at 451 121
\pinlabel $\alpha_0$ at 239 281
\pinlabel $\alpha_1$ at 309 154
\pinlabel $\alpha_2$ at 351 12
\endlabellist
\centering
\vspace{0.7cm}
\includegraphics[scale=0.5]{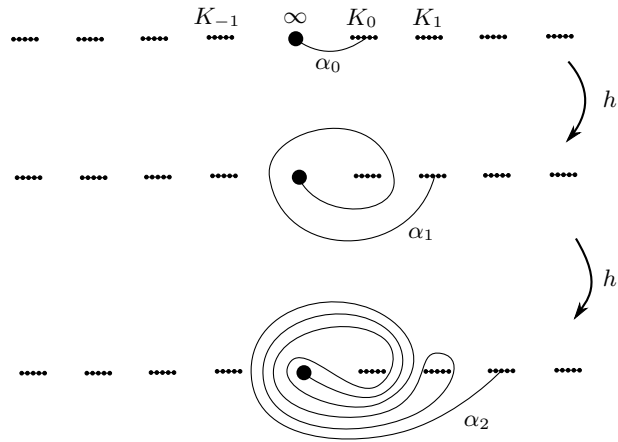}
\caption{Action de $h$ sur les rayons $\alpha_0$ et $\alpha_1$.}
\label{figu:action-h}
\end{figure}

\subsubsection*{Définition de $h$}

On fixe un équateur $\E$ et un alphabet de segments $(s_k)_{k\in \Z}$ comme dans la partie \ref{partie codage}. Pour tout $k \in \Z- \{0\}$, on note $C_k$ les points de $K$ entre $s_{k-1}$ et $s_k$ sur $\E$. En particulier, les $C_k$ sont des ouverts-fermés de l'ensemble de Cantor initial $K$ pour tout $k$, ce sont donc des ensembles de Cantor (tout ouvert-fermé d'un ensemble de Cantor en est un, d'après la caractérisation comme compact métrique totalement discontinu sans point isolé). On note $I$ une composante connexe de $\E - K$ telle que $I \cup \{\infty\}$ partagent l'équateur en deux composantes dont l'une contient tous les segments $s_k$ avec $k>0$ et l'autre contient tous les segments $s_k$ avec $k<0$.  

Soit $\C_N$ un cercle topologique qui coïncide avec l'équateur $\E$ en dehors d'un voisinage de l'infini et qui passe dans l'hémisphère nord au dessus de l'infini. Soit $\C_S$ un cercle topologique qui coïncide avec l'équateur $\E$ en dehors d'un voisinage de l'infini et qui passe dans l'hémisphère sud en dessous de l'infini. 

%figure
\pgfdeclareimage[interpolate=true,height=5cm]{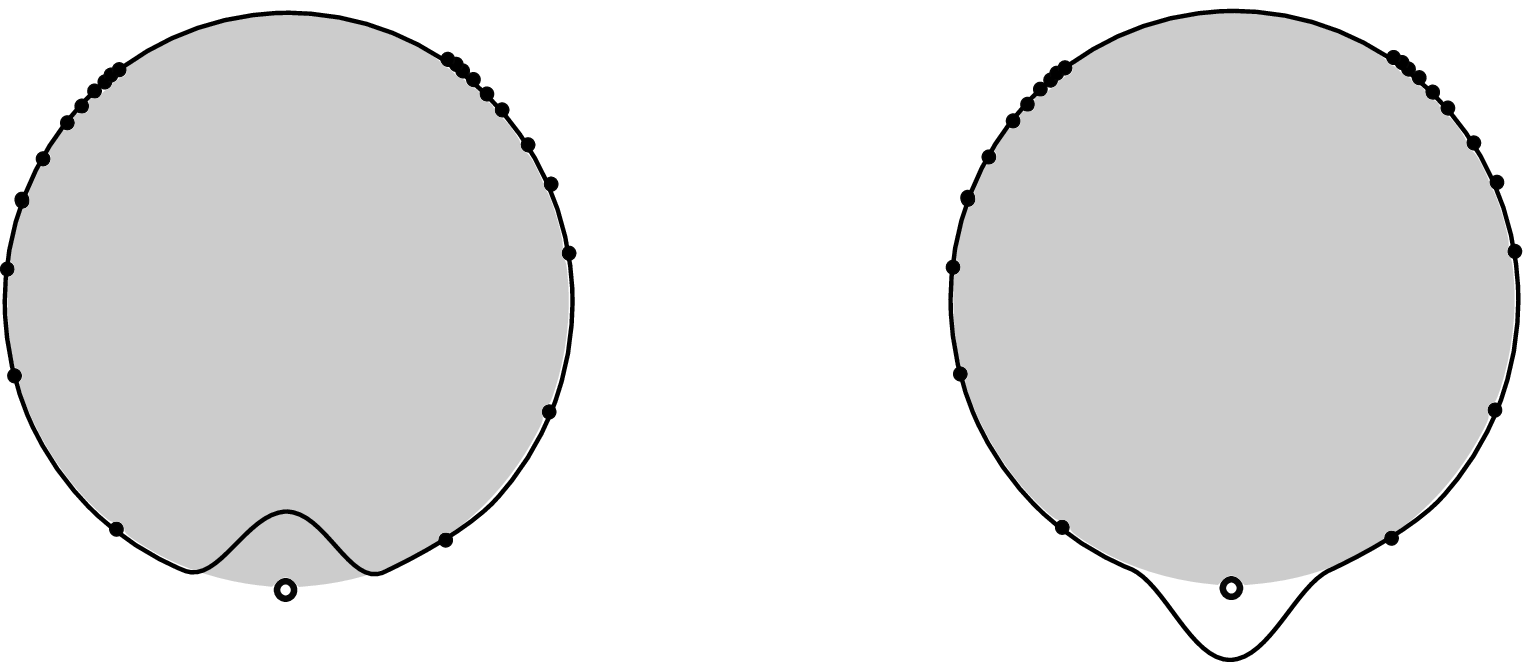}{cercles}
\begin{figure}[!h]
\labellist
\small\hair 2pt
\pinlabel $\infty$ at 82 10
\pinlabel $\infty$ at 355 10
\pinlabel $I$ at 79 200
\pinlabel $I$ at 356 200
\pinlabel $\C_N$ at 25 190
\pinlabel $\C_S$ at 300 190
\pinlabel $K_0$ at 133 20
\pinlabel $K_0$ at 406 21
\pinlabel $K_{-1}$ at 25 27
\pinlabel $K_{-1}$ at 299 28
\pinlabel $K_1$ at 175 67
\pinlabel $K_1$ at 447 68
\endlabellist
\centering
\vspace{0.3cm}
\includegraphics[scale=0.6]{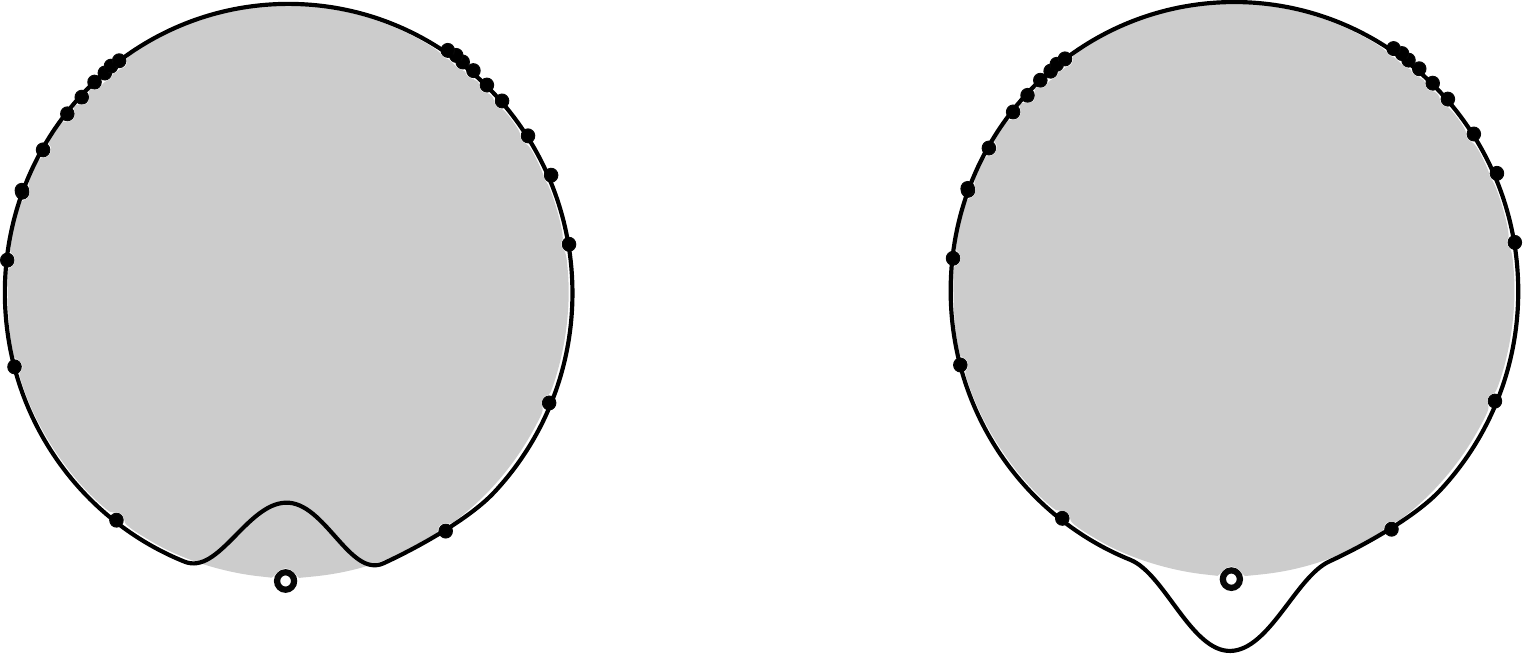}
\caption{$\C_N$ et $\C_S$ (sur chaque figure, la partie grisée représente l'hémisphère nord).}
\label{cercles}
\end{figure}

Soit $\tilde t_1$ un homéomorphisme de $\C_N$ qui envoie chaque morceau d'ensemble de Cantor $K_k$ sur le morceau d'ensemble de Cantor $K_{k+1}$ pour tout $k\in \Z$ et qui vaut l'identité sur $I$ (tout ensemble de Cantor de l'intervalle peut être envoyé sur tout autre ensemble de Cantor par un homéomorphisme de l'intervalle). On prolonge $\tilde t_1$ à un homéomorphisme de la sphère $\Sph^2$, et on considère sa classe d'isotopie $t_1 \in \Gamma$ (voir figure \ref{figu:3h}).

%figure
\vspace{0.3cm}
\begin{figure}[!h]
\labellist
\small\hair 2pt
\pinlabel $t_1$ at 456 145
\pinlabel $t_2$ at 456 88
\pinlabel $t_1$ at 456 31
\endlabellist
\centering
\includegraphics[scale=0.5]{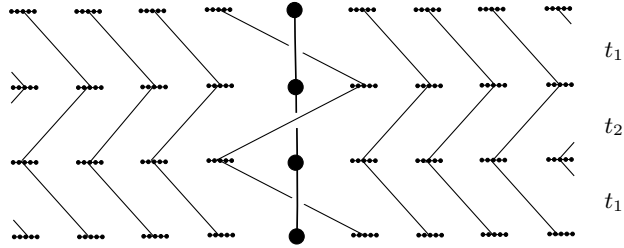}
\caption{Définition de $h:=t_1t_2t_1$.}
\label{figu:3h}
\end{figure}

De même, soit $\tilde t_2$  un homéomorphisme de $\C_S$ qui envoie chaque morceau d'ensemble de Cantor $K_{k+1}$ sur le morceau d'ensemble de Cantor $K_{k}$ pour tout $k\in \Z$ et qui fixe $I$. On prolonge $\tilde t_2$ à un homéomorphisme de la sphère $\Sph^2$, et on considère sa classe d'isotopie $t_2 \in \Gamma$. En particulier, si on note $\phi$ la classe d'isotopie de la rotation d'angle $\pi$ autour de $\infty$ qui envoie pour tout $k\in \Z$ le morceau d'ensemble de Cantor $K_k$ sur le morceau $K_{-k-1}$, alors on peut choisir $t_2 = \phi t_1 \phi^{-1}$.\\

On pose enfin $h:=t_1 t_2 t_1$.

\subsubsection*{Action de $h$ sur le graphe des rayons}

S'il existe une géodésique qui est globalement invariante par une isométrie $g$, et si $g$ n'a pas de point fixe sur cette géodésique, alors on dit que $g$ est hyperbolique et que cette géodésique est un axe de $g$.

\begin{theo}\label{theo-halphak}
L'action de $h$ sur le graphe des rayons est hyperbolique, d'axe $(\alpha_k)_k$. Plus précisément, $h(\alpha_k)=\alpha_{k+1}$ pour tout $k\in \N$.
\end{theo}

Pour voir que $h(\alpha_k)=\alpha_{k+1}$ pour tout $k\geq 0$, on représente $\alpha_k$ par un graphe, comme sur la figure \ref{diag-alpha2}. 

%figure
\vspace{0.3cm}
\begin{figure}[h]
\labellist
\small\hair 2pt
\pinlabel $3$ at 365 -5
\pinlabel $1$ at 405 0
\pinlabel $1$ at 415 60
\endlabellist
\centering
\includegraphics[scale=0.6]{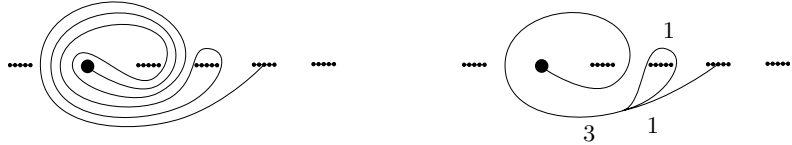}
\caption{A gauche, le rayon $\alpha_2$ ; à droite, un graphe le représentant.}
\label{diag-alpha2}
\end{figure}

Pour chaque rayon, on peut choisir une courbe le représentant et identifier certains morceaux de courbes qui restent dans un voisinage tubulaire les uns des autres. On obtient ainsi un graphe fini plongé de façon lisse dans $\Sph^2$ et disjoint de tous les points de $K$ sauf du point d'attachement du rayon initial. En chaque noeud, les arêtes se répartissent en deux directions. Chaque arête porte un poids qui correspond au nombre de morceaux de courbes qu'elle représente : en chaque noeud, dans une des deux directions il y a une seule arête, qui porte un poids égal à la somme des poids des arêtes de l'autre direction. On peut retrouver le rayon initial à partir d'un graphe le représentant : en effet, il suffit de dupliquer chaque arête le nombre de fois correspondant à son poids, et de recoller les morceaux ainsi obtenus en chaque noeud de l'unique façon possible. On ne recolle que des morceaux arrivant sur le noeud avec des directions différentes, et on cherche à obtenir une courbe simple donc il y a un ordre bien défini sur les morceaux de courbes pour faire ce recollement.

%figure
\vspace{0.3cm}
\begin{figure}[!h]
\labellist
\small\hair 2pt
\pinlabel $K_k$ at 270 78
\pinlabel $1$ at 235 85
\pinlabel $3^{k-3}$ at 170 86
\pinlabel $3^{k-2}$ at 127 86
\pinlabel $1$ at 240 5
\pinlabel $3^{k-3}$ at 160 5
\pinlabel $3^{k-2}$ at 110 5
\pinlabel $3^{k-1}$ at 60 5
\endlabellist
\centering
\includegraphics[scale=0.6]{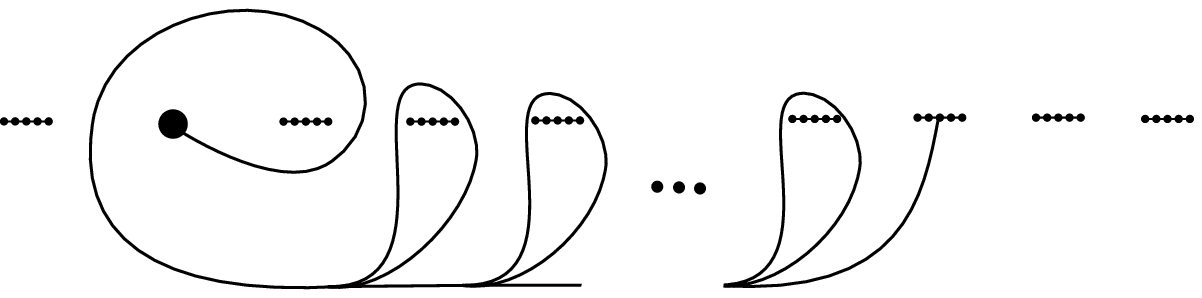}
\caption{Exemple d'un graphe représentant le rayon $\alpha_k$.}
\label{diag-alphak}
\end{figure}

%figure
\vspace{0.3cm}
\begin{figure}[!h]
\labellist
\small\hair 2pt
\pinlabel $t_1$ at 455 404
\pinlabel $t_2$ at 454 267
\pinlabel $t_1$ at 454 134

\pinlabel $\alpha_k$ at 424 456
\pinlabel $t_1(\alpha_k)$ at 429 326
\pinlabel $t_2t_1(\alpha_k)$ at 429 196
\pinlabel $h(\alpha_k)$ at 429 56

\pinlabel $1$ at 295 405
\pinlabel $3^{k-3}$ at 210 405
\pinlabel $3^{k-2}$ at 165 405
\pinlabel $3^{k-1}$ at 115 405
\pinlabel $1$ at 285 480
\pinlabel $3^{k-3}$ at 220 483
\pinlabel $3^{k-2}$ at 182 483

\pinlabel $1$ at 320 13
\pinlabel $3^{k-3}$ at 240 13
\pinlabel $3^{k-2}$ at 200 13
\pinlabel $3^{k-1}$ at 140 13
\pinlabel $1$ at 317 90
\pinlabel $3^{k-3}$ at 257 92
\pinlabel $3^{k-2}$ at 218 92

\pinlabel $K_k$ at 320 473
\pinlabel $K_{k+1}$ at 354 81
\endlabellist
\centering
\includegraphics[scale=0.6]{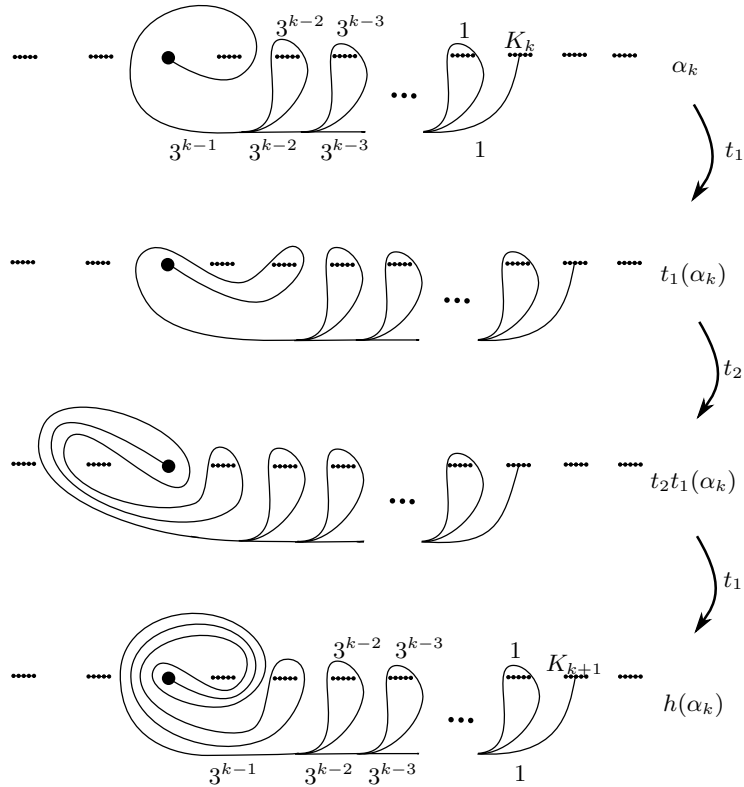}
\caption{Image de $\alpha_k$ par $h$.}
\label{diag-h-alphak}
\end{figure}

Sur la figure \ref{diag-alphak}, on a dessiné un graphe particulier représentant $\alpha_k$, pour tout $k \geq 0$. Comme il existe une courbe $a_k$ représentant $\alpha_k$ qui reste dans un voisinage tubulaire de ce graphe, si $h_0$ est un représentant de $h$, on a que $h_0(a_k)$ reste dans un voisinage tubulaire de l'image par $h_0$ du graphe : le rayon correspondant à l'image du graphe est $h(\alpha_k)$. Sur la figure \ref{diag-h-alphak}, on a dessiné un graphe représentant $\alpha_k$ et les images successives de ce graphe par des représentants de $t_1$, $t_2$ et $t_1$. Le graphe final est donc l'image du graphe de $\alpha_k$ par $h$ : il représente $h(\alpha_k)$. Par ailleurs, on voit que le rayon représenté par ce graphe final est $\alpha_{k+1}$ : ainsi $h(\alpha_k)=\alpha_{k+1}$ pour tout $k \in \N$.

\subsection{Nombre d'intersections positives}

\subsubsection*{Définition}

On note $X_r$ le graphe des rayons, et on oriente chaque rayon de l'infini jusqu'à son point d'attachement.

\begin{definition}[Nombre d'intersections positives] Soit $I : X_r^2 \rightarrow \N \cup \{\infty\}$ l'application qui à deux rayons $\alpha$ et $\beta$ de $X_r$ associe le nombre d'intersections positives entre deux représentants de $\alpha$ et $\beta$ en position minimale d'intersection (voir figure \ref{intersections}).
\end{definition}

%figure
\pgfdeclareimage[interpolate=true,height=5cm]{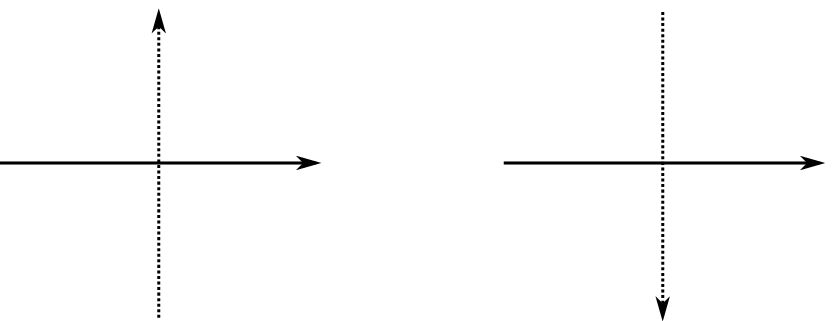}{intersections}
\begin{figure}[!h]
\labellist
\small\hair 2pt
\pinlabel $\alpha$ at 150 50
\pinlabel $\beta$ at 200 3
\pinlabel $\alpha$ at 3 50
\pinlabel $\beta$ at 53 3
\endlabellist
\centering
\includegraphics[scale=0.8]{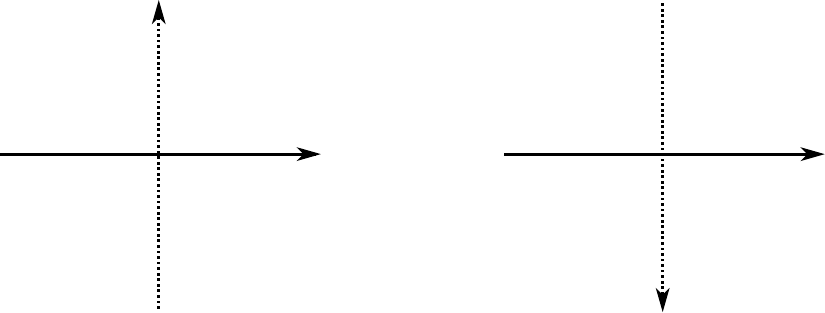}
\vspace{0.2cm}
\caption{Intersection positive à gauche, négative à droite}
\label{intersections}
\end{figure}

\paragraph{Remarques :}
\begin{enumerate}
\item Ce nombre est bien défini : il ne dépend pas du choix de représentants en position minimale d'intersection (d'après la proposition \ref{prop 3.5}).
\item En général, on a $I(\alpha,\beta)\neq I(\beta,\alpha)$.
\item Pour tout $g \in \Gamma$, $I(g.\alpha,g.\beta)=I(\alpha,\beta)$ (car $\Gamma$ est obtenu comme quotient du groupe des homéomorphismes préservant l'orientation). \label{rq3}
\end{enumerate}

\subsubsection*{Cas de la suite $(\alpha_k)_k$}

\begin{lemme} \label{lemme-I1}
Soient $\beta$ et $\gamma$ deux éléments de $X_r$ tels que $A(\gamma)\leq A(\beta)-2$, où $A$ est l'application définie dans la partie \ref{def de A}. Alors $I(\gamma,\beta)\geq 1$.
\end{lemme}

\begin{proof}
On pose $n:=A(\beta)$. Alors $\gamma$ ne commence pas par $\mathring \alpha_{n-1}$. Sur la figure \ref{figu:lemmeI1}, on a représenté le début de $\beta$, c'est-à-dire $\mathring \alpha_n$. Tout représentant de $\gamma$ part de l'infini et doit s'attacher à un point de l'ensemble de Cantor : ainsi, tout représentant de $\gamma$ doit sortir de la zone grisée. Comme $\gamma$ ne commence pas par $\mathring \alpha_{n-1}$, $\gamma$ ne peut pas sortir de la zone grisée en coupant $s_{-1}$. Ainsi $\gamma$ sort de cette zone en intersectant $\beta$. La première intersection est positive, et on a donc $I(\gamma,\beta)\geq 1$.
\end{proof}

%figure
\pgfdeclareimage[interpolate=true,height=5cm]{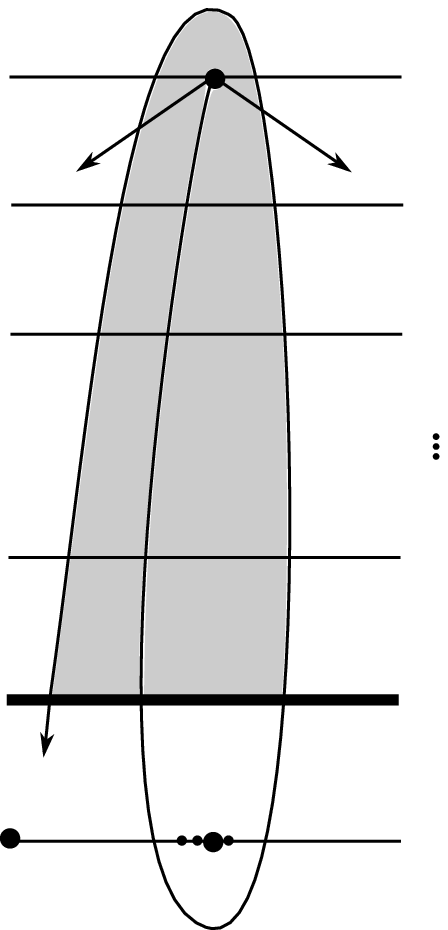}{lemmeI1}
\begin{figure}[!h]
\labellist
\small\hair 2pt
\pinlabel $\mathring \alpha_n$ at 5 50
\pinlabel $s_{-1}$ at 127 65
\pinlabel $s_{-1}$ at -10 245
\pinlabel $s_0$ at 127 245
\pinlabel $s_1$ at 127 210
\pinlabel $\mathring \gamma$ at 100 229
\pinlabel $\mathring \gamma$ at 20 229
\pinlabel ${p}_{n-1}$ at 70 15
\endlabellist
\centering
\includegraphics[scale=0.8]{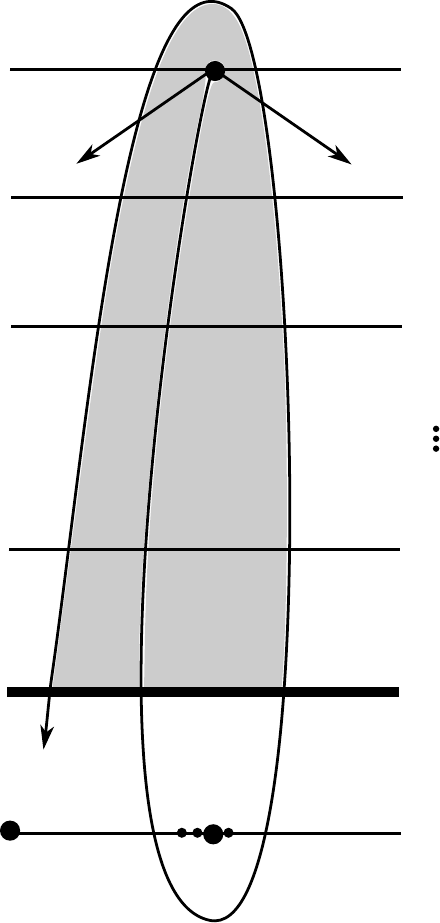}
\caption{Illustration du lemme \ref{lemme-I1} (par définition de $(\alpha_k)_k$, il n'y a aucun point de $K$ dans la zone grisée).}
\label{figu:lemmeI1}
\end{figure}

\paragraph{Remarques :} \begin{itemize}
\item Comme $\alpha_0$ et $\alpha_1$ sont homotopiquement disjoints, on a : \\
$I(\alpha_0,\alpha_1)=I(\alpha_1,\alpha_0)=0$.
\item On n'utilisera pas ce résultat dans la suite, mais on peut calculer précisément les nombres d'intersections positives entre $\alpha_0$ et $\alpha_k$ pour tout $k \geq 2$. On a :
$$I(\alpha_0,\alpha_k)={{3^{k-1}+2k-3}\over 4} \texttt{\ et\ } I(\alpha_k,\alpha_0) = {{3^{k-1}-2k+1}\over 4}.$$
 En effet, notons $(p_k,n_k)=(I(\alpha_0,\alpha_k),I(\alpha_k,\alpha_0))$. On a alors : $$(p_{k+1},n_{k+1})=(2p_k+n_k+1,p_k+2n_k).$$ Ceci vient de la construction de $(\alpha_k)_k$ : on trace un tube autour de $\alpha_{k-1}$, et on peut donc regarder l'orientation des intersection entre ce tube et $\alpha_0$. On sait alors exprimer $p_k$ et $n_k$ en fonction de $k$.
\end{itemize}

\subsection{Non-retournement de l'axe $(\alpha_k)_k$}

On note $B$ la $(2,4)$-constante de Morse du graphe des rayons (voir la partie \ref{def-q.i.}). Quitte à prendre sa partie entière supérieure, on suppose que $B$ est un entier. On cherche à montrer une proposition de non-retournement de l'axe $(\alpha_k)_k$ (proposition \ref{copies}), qui nous servira de manière fondamentale dans les constructions de quasi-morphismes non triviaux (proposition \ref{prop-qm} et théorème \ref{dim infinie}). Pour montrer cette proposition, on a besoin de pouvoir comparer les orientations de certains segments.

\subsubsection*{Segments orientés dans le même sens}

Soit $X$ un espace métrique géodésique. Soient $\gamma_1=[p_1q_1]$ et $\gamma_2=[p_2q_2]$ deux segments géodésiques de $X$ de même longueur et orientés de $p_i$ vers $q_i$. Soit $\gamma'_1$ un segment géodésique (éventuellement infini) contenant $\gamma_1$ et soit $C$ une constante telle que $\gamma_2$ est inclus dans un $C$-voisinage de $\gamma'_1$ et telle que $d(p_1,p_2)\leq C$. On suppose de plus que $|\gamma_1|=|\gamma_2|\geq 3C$. 
Dans ces conditions, on dira que $\gamma_1$ et $\gamma_2$ sont \emph{orientés dans le même sens} si pour tout $r\in \gamma'_1$ tel que $d(r,q_2)\leq C$, $r$ est du même côté de $p_1$ que $q_1$ sur $\gamma'_1$. Comme $\gamma_1$ et $\gamma_2$ sont de longueur supérieure à $3C$, on vérifie facilement que l'existence d'un seul $r$ vérifiant ces conditions suffit.

\begin{lemme}\label{géod-dist}
Si $\gamma_1$ et $\gamma_2$ sont les segments décrits précédemment et s'ils sont orientés dans le même sens, alors $d(q_1,q_2)\leq 3C$.
\end{lemme}

\begin{proof}
Soit $r$ sur $\gamma'_1$ tel que $d(q_2,r)\leq C$. On note $\alpha$ le segment de $\gamma'_1$ entre $p_1$ et $r$, $\beta$ celui entre $r$ et $q_1$. 

\paragraph{1er cas : si $r \in \gamma_1$.} On a alors : $$|\gamma_2|=|\gamma_1|=|\alpha|+|\beta|\leq d(p_1,p_2)+|\alpha|+C.$$
On en déduit : $$|\beta| \leq d(p_1,p_2)+C \leq 2C.$$
Finalement on obtient : $$d(q_1,q_2) \leq d(q_1,r)+d(r,q_2) \leq |\beta| +C \leq 3C.$$

\paragraph{2ème cas : si $r \notin \gamma_1$.} Alors le segment $[p_1,r]\subset \gamma'_1$ contient $\gamma_1$ (car $\gamma_1$ et $\gamma_2$ sont orientés dans le même sens, donc $r$ ne peut pas être de l'autre côté de $p_1$ sur $\gamma'_1$). On a alors : $$|\gamma_1|+|\beta| \leq d(p_1,p_2) + |\gamma_2| + d(q_2,r) \leq |\gamma_1|+2C.$$
D'où : $$|\beta| \leq 2C.$$
Finalement : $$d(q_1,q_2) \leq d(q_1,r)+d(r,q_2) \leq |\beta| + C \leq 3C.$$
\end{proof}

\subsubsection*{Non-retournement}

\begin{prop} [Non-retournement]\label{copies}
Soit $B$ la $(2,4)$-constante de Morse du graphe des rayons et soit $w$ un sous-segment de l'axe $l=(\alpha_k)_{k\in \Z}$ de longueur supérieure à $10B$. Pour tout $g \in MCG(\R^2-K)$, si $g.w$ est inclus dans le $B$-voisinage de $l$, alors il est orienté dans le même sens que $w$.
\end{prop}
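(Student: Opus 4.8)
L'idée est d'utiliser le nombre d'intersections positives $I$ comme un invariant orienté détectant le sens de parcours de l'axe, puis de contrôler le \og décalage \fg\ via la constante de Morse. Concrètement, un segment orienté $w$ de l'axe $l = (\alpha_k)_k$ de longueur $\geq 10B$ s'écrit $w = (\alpha_i, \alpha_{i+1}, \dots, \alpha_j)$ avec $j - i \geq 10B$. Je commencerais par invoquer le lemme \ref{lemme-I1} : pour deux indices $a, b$ avec $A(\alpha_a) \leq A(\alpha_b) - 2$, c'est-à-dire $a \leq b - 2$, on a $I(\alpha_a, \alpha_b) \geq 1$, donc les intersections positives \og voient \fg\ l'ordre des indices (au moins dès que les indices diffèrent de $2$). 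Réciproquement, il faudra un énoncé disant que si $a \geq b$ (ou $a$ proche de $b$ par le haut), alors $I(\alpha_a, \alpha_b)$ reste \og petit \fg, ou du moins que l'asymétrie $I(\alpha_a, \alpha_b)$ versus $I(\alpha_b, \alpha_a)$ permet de distinguer $a < b$ de $a > b$ — c'est exactement l'esprit du calcul explicite $I(\alpha_0, \alpha_k) = \frac{3^{k-1} + 2k - 3}{4}$ versus $I(\alpha_k, \alpha_0) = \frac{3^{k-1} - 2k + 1}{4}$ mentionné en remarque, dont la différence est linéaire en $k$ et strictement positive.

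**Étapes principales.** Supposons par l'absurde que $g.w$ soit inclus dans le $B$-voisinage de $l$ mais orienté dans le sens opposé à $w$. Puisque $\Gamma$ préserve $I$ (remarque \ref{rq3}), on a $I(g.\alpha_a, g.\alpha_b) = I(\alpha_a, \alpha_b)$ pour tous $a, b$ dans l'intervalle d'indices de $w$. D'autre part, chaque $g.\alpha_k$ est à distance $\leq B$ de $l$, donc à distance $\leq B$ de son projeté $\alpha_{\pi(k)}$ sur $l$ ; comme $g$ est une isométrie et $w$ est géodésique, $k \mapsto \pi(k)$ est (à une erreur bornée en $B$ près) une application affine de pente $\pm 1$, et l'hypothèse de retournement force la pente $-1$. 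On utilise alors que $d$ dans $X_r$ et $A$ sont reliés par le corollaire \ref{dist} ($|A(\beta) - A(\gamma)| \leq d(\beta, \gamma)$) : si $g.\alpha_k$ est à distance $\leq B$ de $\alpha_{\pi(k)}$ avec $\pi$ décroissante, alors pour $a < b$ bien choisis dans $w$ (suffisamment écartés, disons de $4B + 2$), on aura $\pi(a) > \pi(b)$ avec un écart $\geq 2$, et même tout ce qu'il faut pour que $g.\alpha_b = g.(\alpha_b)$ \og commence par $\mathring\alpha_m$ \fg\ avec $m$ assez grand tandis que $g.\alpha_a$ a un $A$ plus petit. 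Le lemme \ref{lemme-I1} appliqué dans le bon sens donne alors $I(g.\alpha_b, g.\alpha_a) \geq 1$ et (par le calcul explicite / par une version raffinée du lemme) $I(g.\alpha_a, g.\alpha_b)$ et $I(g.\alpha_b, g.\alpha_a)$ ne peuvent pas coïncider avec $I(\alpha_a, \alpha_b)$ et $I(\alpha_b, \alpha_a)$ dans l'ordre imposé par l'invariance — contradiction. Le lemme \ref{géod-dist} sert ici à garantir que \og orienté dans le même sens \fg\ est bien la négation de \og orienté dans le sens opposé \fg\ avec les bons contrôles de distance ($d(q_1, q_2) \leq 3C$).

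**Principal obstacle.** Le point délicat n'est pas la stratégie globale mais le passage du \og nombre d'intersections positives entre deux rayons fixés $\alpha_a, \alpha_b$ \fg\ à une comparaison avec $g.\alpha_a, g.\alpha_b$ dont on ne connaît que la position à $B$ près de l'axe, et non l'identité exacte dans la liste des $\alpha_k$. Il faudra donc un lemme intermédiaire affirmant que si un rayon $\delta$ est à distance $\leq B$ de $\alpha_m$ sur l'axe, alors $\delta$ \og commence essentiellement par $\mathring\alpha_{m - c(B)}$ \fg\ pour une constante $c(B)$ contrôlée, ce qui permet de réappliquer le lemme \ref{lemme-I1} à $g.\alpha_a$ et $g.\alpha_b$. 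C'est ici qu'intervient de façon cruciale la structure très rigide des $\alpha_k$ (chaque $\alpha_{k+1}$ contient un \og tube \fg\ autour de $\alpha_k$, d'où le fait que $A$ est $1$-lipschitzienne, lemme \ref{lemme distance}) : le voisinage de taille $B$ ne peut \og défaire \fg\ qu'au plus $B$ niveaux de tube. Une fois ce lemme en main, on choisit $w$ de longueur $10B$ précisément pour avoir assez de marge ($10B = B$ pour $\pi(a)$, $B$ pour $\pi(b)$, $c(B) \leq B$ de part et d'autre, et un écart résiduel $\geq 2$ pour appliquer \ref{lemme-I1}), et l'invariance de $I$ sous $g$ donne la contradiction souhaitée. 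L'alternative, plus proche sans doute de l'intention de l'auteure, est d'éviter entièrement le détour par les $\alpha_k$ exacts en définissant directement un \og nombre d'intersections positives avec le demi-axe \fg\ et en montrant qu'il est monotone le long de $l$ et quasi-invariant sous une isométrie préservant un voisinage de $l$ ; la contradiction vient alors de ce que retourner $w$ inverserait le signe de cette quantité monotone.
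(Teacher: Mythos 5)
Votre proposition mobilise les bons ingrédients (corollaire \ref{dist}, lemme \ref{géod-dist}, lemme \ref{lemme-I1}, invariance de $I$ sous $\Gamma$, normalisation par une puissance de $h$), mais l'étape censée produire la contradiction ne fonctionne pas telle quelle. Vous choisissez deux indices $a<b$ \emph{éloignés} (écart $4B+2$) et vous déduisez de $A(g.\alpha_b)\leq A(g.\alpha_a)-2$ que $I(g.\alpha_b,g.\alpha_a)\geq 1$ ; or l'invariance donne exactement $I(g.\alpha_b,g.\alpha_a)=I(\alpha_b,\alpha_a)=I(\alpha_{b-a},\alpha_0)=\frac{3^{b-a-1}-2(b-a)+1}{4}$, quantité qui vaut déjà au moins $1$ dès que $b-a\geq 3$ : il n'y a donc aucune contradiction. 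Le lemme \ref{lemme-I1} ne fournit qu'une minoration par $1$, jamais de majoration ni de minoration croissante, si bien qu'il n'y a rien à confronter au calcul explicite ; et l'invariance de $I$ n'impose aucun \emph{échange} des deux nombres $I(\alpha_a,\alpha_b)$ et $I(\alpha_b,\alpha_a)$ lorsque le segment est retourné, contrairement à ce que suggère votre formulation.

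Le point clé manquant est que la contradiction n'est disponible que pour des indices à distance \emph{exactement} $2$ : c'est le seul cas (avec la distance $1$, inutilisable ici) où l'invariant s'annule, $I(\alpha_{i+2},\alpha_i)=I(\alpha_2,\alpha_0)=0$, alors que le lemme \ref{lemme-I1} forcerait $I(g.\alpha_{i+2},g.\alpha_i)\geq 1$ si $A(g.\alpha_{i+2})\leq A(g.\alpha_i)-2$ (c'est le lemme \ref{w2} de l'article). Il faut donc \emph{localiser} une telle paire, et c'est l'argument combinatoire absent de votre preuve : après normalisation pour avoir $d(\alpha_m,g.\alpha_n)\leq B$, le corollaire \ref{dist} et le lemme \ref{géod-dist} donnent $A(g.\alpha_n)\leq m+B$ et $A(g.\alpha_m)\geq n-3B$, donc une chute totale de $A$ le long de $g.w$ d'au moins $n-m-4B$ ; si aucune paire $(i,i+2)$ ne réalisait une chute de $2$, la $1$-lipschitzianité de $A$ (lemme \ref{lemme distance}) donnerait $A(g.\alpha_{i+2})\geq A(g.\alpha_i)-1$ pour tout $i$, donc une chute totale d'au plus $\frac{n-m}{2}$, ce qui est incompatible dès que $n-m>8B$ (lemme \ref{w1}). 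Notez enfin que l'obstacle que vous présentez comme principal (contrôler le début d'un rayon situé à distance $\leq B$ de $\alpha_m$) est déjà entièrement résolu par le corollaire \ref{dist} ; le véritable obstacle est celui décrit ci-dessus.
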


\noindent  Autrement dit, les segments de l'axe $l$ de longueur supérieure à $10B$ sont \emph{non retournables} : il n'existe pas de copies de $w^{-1}$ allant dans le sens de $w$ dans le $B$-voisinage de l'axe $l$.

\paragraph{Remarque :} Si un élément $h'\in \Gamma$ est conjugué à $h^{-1}$ par une application~{$\psi$}, alors on note $l'$ l'image de $l$ par $\psi$, que l'on munit de l'orientation inverse de celle de $l$. C'est un axe pour $h'$. D'après la proposition précédente, pour tout $w$ sous-segment de l'axe $l'$ de $h'$ de longueur supérieure à $10 B$ et orienté dans le même sens que $l'$, pour tout $g \in \Gamma$, si $g.w$ est inclus dans un $B$-voisinage de l'axe $l$ de $h$, alors $g.w$ est orienté dans le sens opposé à celui de $l$.

\paragraph{Preuve de la proposition \ref{copies}.}
On montre les deux lemmes suivants, qui nous permettent ensuite de conclure :

\begin{lemme} \label{w1}
Soient $m<n$ deux entiers positifs et soit $w=(\alpha_m,\alpha_{m+1},...,\alpha_n)$ un sous-segment de $(\alpha_k)_{k\in \N}$. Soit $g$ in élément de $MCG(\R^2-Cantor)$ tel que $d(\alpha_m,g.\alpha_n)\leq B$ et tel que $g.w$ est dans un $B$-voisinage de $l$, orienté dans le sens inverse de $w$. Alors si $|w|>8B+1$, il existe $m \leq i \leq n$ tel que $A(g.\alpha_{i+2})=A(g.\alpha_i)-2$.
\end{lemme}

\begin{proof}
Comme $d(\alpha_m,g.\alpha_n)\leq B$, on a $A(g.\alpha_n) \leq m+B$ (d'après le corollaire \ref{dist}).\\
Comme $g.w^{-1}$ et $w$ vont dans le même sens et ont même longueur, d'après le lemme \ref{géod-dist} on a :
$$d(\alpha_n,g.\alpha_m)\leq 3B.$$
D'où $A(g.\alpha_m) \geq n-3B$ (d'après le corollaire \ref{dist}).\\

Comme $A$ est $1$-lipschitzienne (lemme \ref{lemme distance}), $A(g.w)$ prend alors toutes les valeurs entières entre $m+B$ et $n-3B$. Par l'absurde, si on suppose que pour tout $i$ entre $m$ et $n$, $A(g.\alpha_{i+2}) \neq A(g.\alpha_i)-2$, alors pour tout $i$ on a : $$A(g.\alpha_{i+2}) \geq A(g.\alpha_i)-1.$$
Par récurrence, on en déduit :
$$A(g.\alpha_n)\geq A(g.\alpha_m)-{n-m\over 2}.$$
Comme $A(g.\alpha_m) \geq n-3B$ et $A(g.\alpha_m) \geq n-3B$, on a :
$$m+B\geq n-3B-{n-m\over 2}.$$
D'où finalement :
$$8B \geq n-m.$$
Comme on a supposé $|w| > 8B+1$, on obtient une contradiction.\end{proof}

\begin{lemme} \label{w2}
Pour tout $g \in MCG(\R^2-Cantor)$ et pour tout $i \geq 0$, on a :
$$A(g.\alpha_{i+2}) \neq A(g.\alpha_i)-2.$$
\end{lemme}

\begin{proof}
Comme pour tout $f \in MCG(\R^2 -Cantor)$ et pour tout $\beta, \gamma \in X_r$, on a $I(f.\beta,f.\gamma)=I(\beta,\gamma)$, on en déduit : $$I(g.\alpha_{i+2},g.\alpha_{i})=I(\alpha_2,\alpha_0)=0.$$

\noindent Par l'absurde, si $A(g.\alpha_{i+2}) = A(g.\alpha_i)-2$, d'après le lemme \ref{lemme-I1} on a : 
$$I(g.\alpha_{i+2},g.\alpha_{i}) \geq 1.$$
\end{proof}

On en déduit la proposition \ref{copies}:

\begin{proof}
Par l'absurde : supposons qu'il existe une copie de $w^{-1}$ qui convient, c'est-à-dire un $g \in \Gamma$ tel que le segment $g.w^{-1}=(g.\alpha_n,...,g.\alpha_{m+1},g.\alpha_m)$ est dans le $B$-voisinage de l'axe $l$ et va dans le même sens que $w$. Quitte à composer $g$ par $h^k$ pour un certain $k \in Z$, on peut supposer que $d(\alpha_m,g.\alpha_n) \leq B$. Comme $|w| > 8B+1$, les lemmes \ref{w1} et \ref{w2} nous permettent de conclure.
\end{proof}

\subsection{Un quasi-morphisme non trivial explicite sur $\Gamma$}

On rappelle la construction de Fujiwara de quasi-morphismes sur les groupes agissant sur des espaces hyperboliques (\cite{Fujiwara}). On fixe $p\in X_r$. Soient $w$ et $\gamma$ deux chemins de $X_r$. Une \emph{copie de $w$} est chemin de la forme $g.w$, avec $g\in \Gamma$. On note $|\gamma|_w$ le nombre maximal de copies disjointes de $w$ sur $\gamma$, et : $$c_w(g):=d(p,g(p))-\mathrm{inf}_\gamma (\mathrm{long}(\gamma)-|\gamma|_w),$$ l'infimum étant considéré sur tous les chemins $\gamma$ entre $p$ et $g(p)$. Comme $X_r$ est hyperbolique, on a alors que l'application $q_w : \Gamma \rightarrow \R$ définie par $q_w:=c_w -c_{w^{-1}}$ est un quasi-morphisme sur $\Gamma$ (proposition $3.10$ de \cite{Fujiwara}). De plus, le quasi-morphisme homogène $\tilde q_w$ défini par $\tilde q_w(g) = \lim_{n\rightarrow \infty} {q(g^n)\over n} $ ne dépend pas du point $p \in X_r$ choisi pour construire $c_w$.\\

On peut maintenant montrer la proposition suivante (qui n'est pas utile pour montrer que l'espace des quasi-morphismes non triviaux est de dimension infinie) :

\begin{prop}\label{prop-qm}
Soit $(\alpha_k)_{k\in \Z}$ la géodésique du graphe des rayons définie précédemment et soit $w$ un sous-segment de cette géodésique de longueur supérieure à $10B$, où $B$ est la $(2,4)$-constante de Morse du graphe des rayons. Alors le quasi-morphisme $\tilde q_w$ est non trivial.
\end{prop}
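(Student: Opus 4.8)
La stratégie consiste à appliquer le critère de non-trivialité de Fujiwara (proposition $3.10$ de \cite{Fujiwara}, ou plus précisément le critère utilisé par Bestvina-Fujiwara) : le quasi-morphisme $\tilde q_w$ associé à un chemin $w$ est non trivial dès que l'on sait qu'il existe un élément $g \in \Gamma$ dont un axe (ou une quasi-orbite) contient beaucoup de copies disjointes de $w$ mais essentiellement aucune copie de $w^{-1}$, de sorte que $c_w(g^n)$ croît linéairement en $n$ tandis que $c_{w^{-1}}(g^n)$ reste contrôlé ; ceci empêche $\tilde q_w$ d'être un homomorphisme. L'élément naturel à utiliser est bien sûr $h$ (théorème \ref{theo-halphak}) : comme $h(\alpha_k) = \alpha_{k+1}$ pour tout $k$, l'axe $l = (\alpha_k)_{k \in \Z}$ est une géodésique $h$-invariante sur laquelle $h$ agit par translation d'amplitude $1$, et si $w = (\alpha_m, \ldots, \alpha_n)$ alors $h^{n-m+1}(w)$, $h^{2(n-m+1)}(w)$, etc. sont des copies disjointes de $w$ le long de l'axe. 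Donc $|\gamma|_w$ croît linéairement le long d'un chemin géodésique représentant $h^N$, ce qui donne $c_w(h^N) \gtrsim N \cdot |w| / (n-m+1)$ — linéaire en $N$.

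Le point crucial, et l'obstacle principal, est de majorer $c_{w^{-1}}(h^N)$ : il faut montrer qu'un chemin géodésique (ou quasi-géodésique) de $p$ à $h^N(p)$ ne peut pas contenir beaucoup de copies disjointes de $w^{-1}$. C'est exactement là qu'intervient la proposition \ref{copies} de non-retournement. Tout chemin géodésique de $p$ à $h^N(p)$ reste, par le lemme de Morse (avec la $(2,4)$-constante $B$), dans le $B$-voisinage de l'axe $l$ ; une copie $g.w^{-1}$ disjointe portée par un tel chemin est donc incluse dans le $B$-voisinage de $l$. Comme $|w| > 10B$, la proposition \ref{copies} appliquée au segment $g.w^{-1}$ — qui est une copie du segment retourné de l'axe — force cette copie à être orientée \emph{dans le même sens} que le segment correspondant de $l$, c'est-à-dire dans le sens opposé à $w^{-1}$ ; autrement dit $g.w^{-1}$ ne peut pas apparaître comme copie de $w^{-1}$ le long d'un tel chemin orienté de $\alpha_0$ vers $h^N(\alpha_0)$, sauf éventuellement un nombre borné de fois (liées aux extrémités et aux constantes de quasi-géodésie). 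On en déduit $|\gamma|_{w^{-1}} \leq \text{const}$ pour tout $\gamma$ géodésique entre $p$ et $h^N(p)$, donc $c_{w^{-1}}(h^N)$ est borné indépendamment de $N$.

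Il reste alors à conclure : on a $q_w(h^N) = c_w(h^N) - c_{w^{-1}}(h^N) \geq cN - C'$ pour des constantes $c > 0$, $C' \geq 0$, donc $\tilde q_w(h) = \lim_N q_w(h^N)/N \geq c > 0$. Comme $\tilde q_w$ est homogène et non nul sur $h$, pour qu'il soit un homomorphisme il faudrait qu'il soit non nul sur tout itéré et compatible à la conjugaison ; mais on peut aussi simplement invoquer l'argument standard (Bestvina-Fujiwara) : en choisissant un conjugué $h'$ de $h^{-1}$ comme dans la remarque suivant la proposition \ref{copies}, on a également $\tilde q_w(h') = 0$ (car aucune copie de $w$ ne peut apparaître le long de l'axe de $h'$, cette fois par la version retournée de la proposition \ref{copies}), alors que si $\tilde q_w$ était un homomorphisme on aurait $\tilde q_w(h') = \tilde q_w(h^{-1}) = -\tilde q_w(h) \neq 0$, contradiction. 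Donc $\tilde q_w$ n'est pas un homomorphisme, c'est-à-dire que $q_w$ est un quasi-morphisme non trivial. Les vérifications de routine à soigner sont : le contrôle précis du nombre borné de copies parasites de $w^{-1}$ provenant des morceaux de $\gamma$ loin de l'axe ou près des extrémités, et l'invariance de $\tilde q_w$ par rapport au point base $p$ (déjà rappelée dans l'énoncé de la construction de Fujiwara).
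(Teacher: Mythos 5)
La première partie de votre argument --- minoration linéaire de $c_w(h^N)$ le long de l'axe, puis majoration de $c_{w^{-1}}(h^N)$ via le lemme de Morse et la proposition \ref{copies} --- est correcte et coïncide avec la preuve de l'article : elle établit bien $\tilde q_w(h)>0$. En revanche, votre étape finale contient une erreur. Vous affirmez que $\tilde q_w(h')=0$ pour $h'$ conjugué de $h^{-1}$, au motif qu'aucune copie de $w$ n'apparaît près de l'axe de $h'$ ; mais cela ne contrôle que $c_w((h')^N)$, et vous oubliez $c_{w^{-1}}((h')^N)$. L'axe de $h'$ est l'image de $l$ par la conjugaison, parcouru en sens inverse : il porte donc linéairement beaucoup de copies de $w^{-1}$ (de la forme $\psi h^{-jm}.w^{-1}$), de sorte que $c_{w^{-1}}((h')^N)$ croît linéairement et $\tilde q_w(h')=-\tilde q_w(h)\neq 0$. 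C'était d'ailleurs prévisible : tout quasi-morphisme homogène est invariant par conjugaison, donc on a toujours $\tilde q_w(h')=\tilde q_w(h^{-1})=-\tilde q_w(h)$, et l'égalité que vous espériez contredire est automatiquement vérifiée. Votre argument ne démontre donc pas que $\tilde q_w$ n'est pas un morphisme ; la phrase \og pour qu'il soit un homomorphisme il faudrait qu'il soit non nul sur tout itéré et compatible à la conjugaison \fg \ n'est pas non plus une obstruction, ces deux propriétés étant compatibles avec le fait d'être un morphisme.

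Il manque donc un ingrédient pour conclure. L'article utilise la décomposition $h=t_1t_2t_1$ et montre que $\tilde q_w(t_1)=\tilde q_w(t_2)=0$ : en prenant $p=\alpha_0$, on a $d(\alpha_0,t_1^k(\alpha_0))=1$ pour tout $k$ car $t_1^k(\alpha_0)$ reste dans l'hémisphère nord (l'orbite est bornée), donc $c_w(t_1^k)=c_{w^{-1}}(t_1^k)=0$ ; ainsi $\tilde q_w(t_1t_2t_1)\neq \tilde q_w(t_1)+\tilde q_w(t_2)+\tilde q_w(t_1)$ et $\tilde q_w$ n'est pas un morphisme. Une alternative, signalée en remarque dans l'article, consiste à montrer que $\Gamma$ est parfait : le seul morphisme de $\Gamma$ dans $\R$ est alors nul, et la non-nullité de $\tilde q_w$ que vous avez établie suffit à conclure.
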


\paragraph{Remarque :} Comme on connait la constante d'hyperbolicité du graphe $X_\infty$, on peut en déduire celle du graphe des rayons, et on peut donc calculer la constante $B$ : ainsi le segment $w$ peut être choisi explicitement.

\begin{proof}
Comme $\tilde q_w$ est homogène, il suffit de montrer que ce n'est pas un morphisme pour avoir la non-trivialité. On montre d'une part que $\tilde q_w(h)$ est non nul, où $h=t_1t_2t_1$ est l'élément hyperbolique de $\Gamma$ défini précédemment, et d'autre part que $\tilde q_w(t_1)=\tilde q_w(t_2)=0$ : ainsi $\tilde q_w(t_1t_2t_1) \neq \tilde q_w(t_1)+\tilde q_w(t_2)+\tilde q_w(t_1)$, donc $\tilde q_w$ n'est pas un morphisme.\\

La première affirmation se déduit de la proposition \ref{copies}. C'est la stratégie décrite dans \cite{SCL}, page $74$ : si l'on note $m$ la longueur de $w$ et si l'on choisit $p =\alpha_0$, pour tout $k\in \N$ on a $c_w(h^{km})=k$ et $c_{w^{-1}}(h^{km})=0$. En effet, la première égalité est évidente, et pour la deuxième, on utilise le fait que les chemins qui réalisent l'infimum sont des $(2,4)$-géodésiques, d'après le lemme $3.3$ de \cite{Fujiwara}. Ils restent donc dans un $B$-voisinage de l'axe $(\alpha_k)_k$, d'après le lemme de Morse (\ref{Morse}). Par ailleurs ce voisinage ne contient pas de copie de $w^{-1}$, d'après la proposition \ref{copies} (voir \cite{SCL} partie $3.5$ pour plus de détails).
Ainsi : $$\tilde q_w(h^m) := \lim_{k\rightarrow \infty} {c_w(h^{km}) - c_{w^{-1}}(h^{km})\over k} = 1.$$
Donc $\tilde q_w$ est non nul.

Montrons que $\tilde q_w(t_1)=\tilde q_w(t_2)=0$. On choisit $p=\alpha_0$. Alors pour tout $k \in \N$, $t_1^k(\alpha_0)$ est une classe d'isotopie de courbe incluse dans l'hémisphère nord, donc $d(p,t_1^k(p))=1$. Ainsi $c_w(t_1^k)=c_{w^{-1}}(t_1^k)=0$, et donc $\tilde q_w(t_1)=0$. De la même façon, $\tilde q_w(t_2)=0$. Finalement, on a montré que $\tilde q_w$ est un quasi-morphisme non trivial. \end{proof}

\paragraph{Remarque :} Pour montrer que $\tilde q_w$ n'est pas un morphisme, on peut aussi montrer que \emph{$\Gamma$ est un groupe parfait}, c'est-à-dire que tout élément de $\Gamma$ s'écrit comme un produit de commutateurs. On en déduit que le seul morphisme de $\Gamma$ dans $\R$ est le morphisme trivial. Comme $\tilde q_w$ est non identiquement nul, ce n'est pas un morphisme. 

L'écriture de tout élément de $\Gamma$ comme produit de commutateurs se déduit du lemme de Calegari dans \cite{blog-Calegari}, que l'on peut énoncer ainsi : \emph{Si $g\in \Gamma$ est tel qu'il existe $x \in X_r$ tel que $d(x,gx)=1$, alors $g$ est le produit d'au plus deux commutateurs.} 

Soit $g\in \Gamma$ et soit $x\in X_r$ quelconque. On considère un chemin dans $X_r$ entre $x$ et $gx$, que l'on note  $(x=x_0,x_1,...,x_n=gx)$. Comme $\Gamma$ agit transitivement sur $X_r$, pour tout $1\leq i \leq n-1$ il existe $g_i \in \Gamma$ qui envoie $x_{i+1}$ sur $x_i$, et qui s'écrit donc comme produit d'au plus deux commutateurs. On a alors que $g_1...g_{n-1}g$ envoie $x$ sur $x_1$, avec $d(x,x_1)=1$. Ainsi cet élément s'écrit aussi comme produit d'au plus deux commutateurs. Finalement $g$ s'écrit comme produit de commutateurs.

\subsection{Dimension de l'espace des quasi-morphismes non triviaux}

\begin{theo}\label{dim infinie}
L'espace $\tilde{Q}(\Gamma)$ des quasi-morphisme non triviaux sur $\Gamma$ est de dimension infinie.
\end{theo}

\begin{proof}
On utilise le théorème $1$ de \cite{Bestvina-Fujiwara}. Comme $\Gamma$ agit par isométries sur le graphe des rayons qui est hyperbolique, si on trouve deux éléments hyperboliques $h_1, h_2 \in \Gamma$ agissant par translation sur des axes $l_1$ et $l_2$ respectivement, tels que $l_1$ et $l_2$ sont orientés dans le sens de cette action, et qui vérifient les deux propriétés suivantes, alors le théorème est démontré (voir \cite{Bestvina-Fujiwara}). Les deux propriétés à vérifier sont :

\begin{enumerate}
\item \og $h_1$ et $h_2$ sont indépendants \fg \ : la distance entre un demi-axe quelconque de $l_1$ et un demi-axe quelconque de $l_2$ est non bornée.
\item \og $h_1  \nsim h_2$ \fg : il existe une constante $C$ telle que pour tout segment $w$ de $l_2$ de longueur supérieure à $C$, pour tout $g \in \Gamma$, $g.w$ sort du $B$-voisinage de $l_1$ ou bien est orienté dans le sens inverse de $l_1$.
\end{enumerate} 

\noindent Trouvons donc deux éléments hyperboliques qui vérifient ces propriétés. On note $h_1 \in \Gamma$ l'élément $h$ qui agit par translation sur l'axe $(\alpha_k)_k$ défini précédemment. Soit $\phi \in \Gamma$ la classe de la rotation d'angle $\pi$ autour de l'infini. On suppose que $K$ est symétriquement disposé autour de $\infty$, de sorte que $\phi$ préserve $K$ et envoie chaque sous-ensemble de Cantor $K_i$ sur $K_{-i-1}$. Soit enfin $h_2:= \phi h_1^{-1} \phi^{-1}$. Alors $h_1 \nsim h_2$ d'après la proposition \ref{copies} et la remarque qui la suit (la constante $C:=10B$ convient, où $B$ est la constante de Morse). D'autre part, on va montrer que $h_1$ et $h_2$ sont indépendants, ce qui conclura la preuve.

On a montré dans le corollaire \ref{dist} que pour tout $n \geq 2$, tout rayon à distance inférieure ou égale à $(n-2)$ de $h_1^n(\alpha_0)$ commence par $\mathring \alpha_2$. De même tout rayon à distance inférieure ou égale à $(n-1)$ de $h_1^{n-1}(\alpha_0)$ commence par $\mathring \alpha_1$. 

On a un phénomène similaire pour $h_2$, $h_2^{-1}$ et $h_1^{-1}$. On note $\sigma$ la classe d'isotopie de la symétrie axiale par rapport à l'équateur. En particulier, $\sigma$ est égale à son inverse, fixe l'ensemble de Cantor $K$ et n'est pas un élément de $\Gamma$ car ne préserve pas l'orientation. De plus, comme $\phi$ est aussi égale à son inverse, on a :
$$h_2=\phi h_1^{-1} \phi^{-1} = \sigma h_1 \sigma^{-1}.$$
$$ h_2^{-1}= \phi h_1 \phi^{-1}.$$
$$ h_1^{-1}=\sigma \phi h_1 (\sigma \phi)^{-1}.$$

D'autre part, on a $\phi \alpha_{-1}=\sigma \alpha_0$ (voir figure \ref{h et compagnie}). Comme $\alpha_n=h_1^n(\alpha_0)$, on en déduit, d'après la troisième égalité qui précède, que pour tout $k\in \Z$ :
$$\phi \alpha_{-k-1}=\sigma \alpha_k.$$

Si l'on étend l'écriture en suites complètes aux rayons qui commencent dans l'hémisphère nord, en ajoutant par exemple nord ou sud dans l'écriture en segments du rayon, on peut coder les $\phi \alpha_k$. On en déduit alors, en utilisant le corollaire \ref{dist} et les égalités qui précèdent, que (voir figure \ref{h et compagnie}) :
\begin{itemize}
\item Pour tout $n \geq 2$, tout rayon à distance inférieure ou égale à $n-2$ de $h_2^n(\phi \alpha_0)=h_2^n(\sigma \alpha_{-1})=\sigma h_1^{n-1}(\alpha_0)$ commence par ${\phi \mathring \alpha_{-2}}=\mathring{\sigma \alpha_1}$.

\item Pour tout $n\geq 2$, tout rayon à distance inférieure ou égale à $n-2$ de $h_2^{-n}(\phi \alpha_0)=\phi(h_1^n\alpha_0)$ commence par $ {\phi \mathring \alpha_{2}}$. 

\item Pour tout $n\geq 2$, tout rayon à distance inférieure ou égale à $n-2$ de $h_1^{-n}(\alpha_0)=\sigma \phi h_1^n \phi \sigma \alpha_0=\sigma \phi h_1^n \alpha_{-1}$ commence par $\mathring \alpha_{-2} = \sigma \phi \mathring \alpha_1$.
\end{itemize}

%figure
\vspace{0.3cm}
\begin{figure}[!h]
\centering
\def\svgwidth{0.87\textwidth}
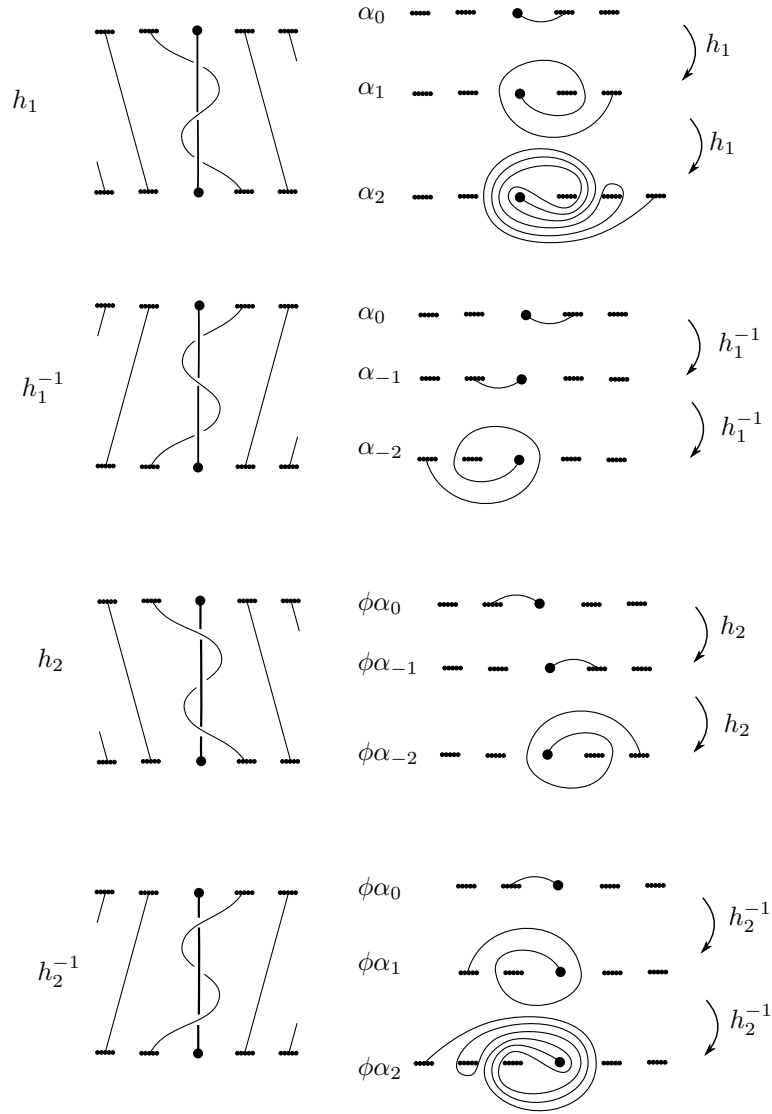
\vspace{0.3cm}
\caption{$h_1$, $h_2$, leurs inverses et leur action sur quelques rayons}
\label{h et compagnie}
\end{figure}

Ainsi, pour tout $n\geq 2$, tous les éléments des boules de rayon $(n-2)$ et de centres respectifs $h_1^n(\alpha_0)$, $h_2^n(\phi \alpha_0)$, $h_2^{-n}(\phi \alpha_0)$ et  $h_1^{-n}(\alpha_0)$ commencent respectivement par $\mathring \alpha_2$, ${\phi \mathring \alpha_{-2}}$, ${\phi \mathring \alpha_{2}}$ et $\mathring \alpha_{-2}$. Or $\mathring \alpha_2$, ${\phi \mathring \alpha_{-2}}$, ${\phi \mathring \alpha_{2}}$ et $\mathring \alpha_{-2}$ n'ont deux à deux pas de représentants disjoints : ces quatre boules sont donc disjointes (et même à distance supérieure à $1$). Ainsi les axes $l_1$ et $l_2$ de $h_1$ et $h_2$ sont tels que la distance entre deux demi-axes est non bornée. \end{proof}

%figure
%\vspace{0.3cm}
\begin{figure}[!h]
\centering
\def\svgwidth{0.9\textwidth}
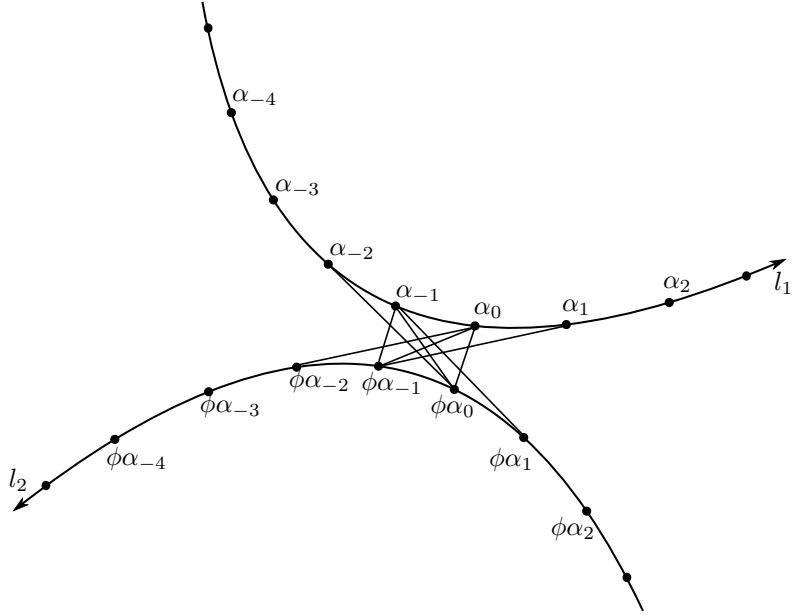
%\vspace{0.3cm}
\caption{Axes de $h_1$ et $h_2$ : ce graphe est isométriquement plongé dans $X_r$}
\label{figu:axes}
\end{figure}

Plus précisément, on a que pour tous $|n|,|m| \geq 2$ (voir figure \ref{figu:axes}) : $$d(h_2^n(\phi \alpha_0), h_1^m(\alpha_0))\geq |n|+|m|-1.$$

\section{Exemple d'un élément hyperbolique de scl nulle}

Danny Calegari a montré que les éléments de $\Gamma$ ayant une orbite bornée sur le graphe des rayons sont de $scl$ nulle (voir \cite{blog-Calegari}). Montrons que la réciproque n'est pas vraie.

 \begin{prop}\label{ex-scl_nulle-hyp}
 Il existe un élément $g\in \Gamma$ hyperbolique de $scl$ nulle.
 \end{prop}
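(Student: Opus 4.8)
L'idée est de construire explicitement un élément hyperbolique $g \in \Gamma$ dont la $scl$ s'annule, en le fabriquant de façon à ce qu'il soit conjugué à son propre inverse par un élément du groupe. En effet, si $g = \psi g^{-1} \psi^{-1}$ pour un certain $\psi \in \Gamma$, alors pour tout quasi-morphisme homogène $\tilde q$ sur $\Gamma$ on a $\tilde q(g) = \tilde q(\psi g^{-1} \psi^{-1}) = \tilde q(g^{-1}) = -\tilde q(g)$ (car $\tilde q$ est invariant par conjugaison et impair), donc $\tilde q(g) = 0$. Par le théorème de dualité de Christophe Bavard \cite{CB}, comme tous les quasi-morphismes homogènes s'annulent sur $g$ et que $g \in [\Gamma,\Gamma]$ (ce qui découle du fait que $\Gamma$ est parfait, déjà établi dans la remarque suivant la proposition \ref{prop-qm}), on obtient $scl(g) = 0$.

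Concrètement, je prendrais pour $g$ un élément du type $h_1 h_2$ où $h_1 = h = t_1 t_2 t_1$ est l'élément hyperbolique déjà construit (d'axe $l_1 = (\alpha_k)_k$) et $h_2 = \phi h_1^{-1} \phi^{-1} = \sigma h_1 \sigma^{-1}$ est le conjugué introduit dans la preuve du théorème \ref{dim infinie}, où $\phi$ est la rotation d'angle $\pi$ autour de $\infty$ et $\sigma$ la symétrie axiale par rapport à l'équateur. Comme $\phi$ échange les rôles de $h_1$ et $h_2^{-1}$, on vérifie que $\phi (h_1 h_2) \phi^{-1} = h_2^{-1} h_1^{-1} = (h_1 h_2)^{-1}$, donc $g := h_1 h_2$ est bien conjugué à son inverse. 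Il reste alors à montrer que $g$ agit de manière hyperbolique sur le graphe des rayons. Pour cela, j'utiliserais le fait, établi dans la preuve du théorème \ref{dim infinie} (et illustré par la figure \ref{figu:axes}), que les demi-axes de $h_1$ et $h_2$ s'écartent linéairement : plus précisément $d(h_2^n(\phi\alpha_0), h_1^m(\alpha_0)) \geq |n| + |m| - 1$ pour $|n|,|m| \geq 2$. Cette propriété de configuration en « quatre pattes » des axes permet de montrer, par un argument standard de ping-pong dans un espace hyperbolique, que les puissances de $g = h_1 h_2$ vérifient $d(p, g^n(p)) \to \infty$, donc que $g$ est hyperbolique ; on peut aussi invoquer directement le fait que $h_1$ et $h_2$ sont indépendants au sens de Bestvina--Fujiwara pour en déduire que le sous-groupe qu'ils engendrent contient des éléments hyperboliques comme $h_1 h_2$.

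Le point délicat sera de vérifier proprement l'hyperbolicité de $g$ : il faut s'assurer que dans le produit $h_1 h_2$ les « mauvaises annulations » ne se produisent pas, c'est-à-dire que la trajectoire $p, g(p), g^2(p), \dots$ reste bien une quasi-géodésique. La géométrie explicite des rayons $\alpha_k$ et $\phi\alpha_k$ et l'estimée de distance entre demi-axes (obtenue via le corollaire \ref{dist} et la fonction $A$) fournissent exactement le contrôle requis : chaque application de $g$ fait alternativement « avancer » le long d'une copie de $l_1$ puis d'une copie de $l_2$, et l'écartement linéaire des deux axes empêche tout retour en arrière, de sorte que les longueurs s'ajoutent. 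Une fois l'hyperbolicité de $g$ acquise et la conjugaison $g \sim g^{-1}$ constatée, la conclusion $scl(g) = 0$ est immédiate par dualité comme expliqué ci-dessus.
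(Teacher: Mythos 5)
La moitié \og longueur stable des commutateurs \fg\ de votre argument coïncide essentiellement avec celle de l'article : on pose $g$ égal à un produit de $h_1$ et de $h_2=\phi h_1^{-1}\phi^{-1}$, on constate que $\phi$ conjugue $g$ à $g^{-1}$ (votre calcul $\phi(h_1h_2)\phi^{-1}=h_2^{-1}h_1^{-1}$ est correct), et on en déduit $scl(g)=0$ — soit directement, puisque $g^{2n}=g^n\phi g^{-n}\phi^{-1}=[g^n,\phi]$ est un commutateur, soit par votre détour via la dualité de Bavard \cite{CB} et la perfection de $\Gamma$, qui est également valable. La vraie difficulté est l'hyperbolicité de $g$, et c'est là que votre proposition présente une lacune. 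L'indépendance de $h_1$ et $h_2$ au sens de \cite{Bestvina-Fujiwara}, ou l'estimée $d(h_2^n(\phi\alpha_0),h_1^m(\alpha_0))\geq |n|+|m|-1$, n'entraînent pas que le produit $h_1h_2$ des éléments eux-mêmes soit hyperbolique : le ping-pong standard ne fournit des éléments loxodromiques que pour des produits de \emph{grandes} puissances $h_1^Nh_2^N$, et le principe local-global pour les quasi-géodésiques exige des longueurs de translation grandes devant la constante d'hyperbolicité, alors qu'ici $h_1$ et $h_2$ ne translatent leurs axes que d'une seule arête. Déjà dans $\mathbb{H}^2$, le produit de deux isométries hyperboliques d'axes disjoints peut être elliptique. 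Votre phrase selon laquelle l'écartement linéaire des axes empêche tout retour en arrière \og de sorte que les longueurs s'ajoutent \fg\ est l'intuition correcte, mais ce n'est pas une preuve, et rien dans ce que vous invoquez ne la remplace.

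L'article comble précisément ce trou par un calcul explicite : il construit un demi-axe $(\gamma_k)_k$, variante de $(\alpha_k)_k$ où l'on contourne le point d'attachement alternativement par la droite et par la gauche, vérifie à l'aide des graphes pondérés que $g(\gamma_{2n})=\gamma_{2n+2}$, puis adapte le codage et la fonction $A$ (lemme \ref{lemme distance}, corollaire \ref{dist}, proposition \ref{alpha_k geod}) pour montrer que $(\gamma_k)_k$ est géodésique. Notez enfin que la définition d'élément hyperbolique adoptée dans l'article exige une géodésique globalement invariante sur laquelle $g$ agit sans point fixe : même si vous parveniez à établir que $d(p,g^np)$ croît linéairement, il vous resterait à exhiber une telle géodésique. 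Pour rendre votre approche rigoureuse, il faudrait donc soit un argument quantitatif soigné de concaténation de quasi-géodésiques exploitant le non-retournement (proposition \ref{copies}) et l'estimée de divergence, soit — plus simplement — revenir au calcul explicite de l'action de $g$ sur une famille de rayons codés, ce qui est exactement la voie suivie par l'article.
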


 \begin{proof} 
Soient $h_1$ et $h_2$ les deux éléments de $\Gamma$ définis dans la preuve du théorème \ref{dim infinie} : $h_1$ est l'élément $h$ défini plus tôt, et $h_2=\phi h_1^{-1} \phi^{-1}$, où $\phi$ est la classe de la rotation d'angle $\pi$ autour l'infini. Soit $g:=h_2h_1$ (voir figure \ref{homeo-g}). Alors $g$ est conjugué à son inverse (car $\phi=\phi^{-1}$), donc $scl(g)=0$. Montrons que de plus, $g$ est hyperbolique. On construit pour cela un demi-axe géodésique $(\gamma_k)_k$ du graphe des rayons, sur lequel $g$ agit par translation (comme on l'avait fait avec $(\alpha_k)_k$ pour montrer que $h$ est hyperbolique).

%figure
\begin{figure}[!h]
\labellist
\small\hair 2pt
\pinlabel $h_1$ at 235 130
\pinlabel $h_2$ at 235 40
\endlabellist
\centering
\vspace{0.3cm}
\includegraphics[scale=0.9]{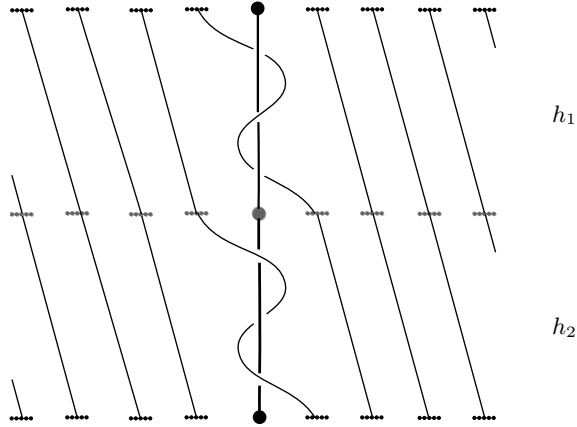}
\caption{L'élément $g:=h_2h_1$.}
\label{homeo-g}
\end{figure}

\paragraph{Définition de $(\gamma_k)_k$.}
La suite $(\gamma_k)_k$ se défini de manière similaire à $(\alpha_k)_k$, à ceci près que pour définir $\alpha_{k+1}$ à partir de $\alpha_k$ on longeait la courbe et on contournait le point d'attachement de $\alpha_k$ toujours par la droite, mais pour $(\gamma_k)_k$ on contourne le point d'attachement de la courbe précédente alternativement une fois par la droite, une fois par la gauche (voir figures \ref{gamma012} et \ref{gammak}).

%figure
\begin{figure}[!h]
\labellist
\small\hair 2pt
\pinlabel $\gamma_{0}$ at 59 70
\pinlabel $\gamma_{1}$ at 303 45
\pinlabel $\gamma_{2}$ at 188 -10
\endlabellist
\centering
\vspace{0.3cm}
\includegraphics[scale=0.8]{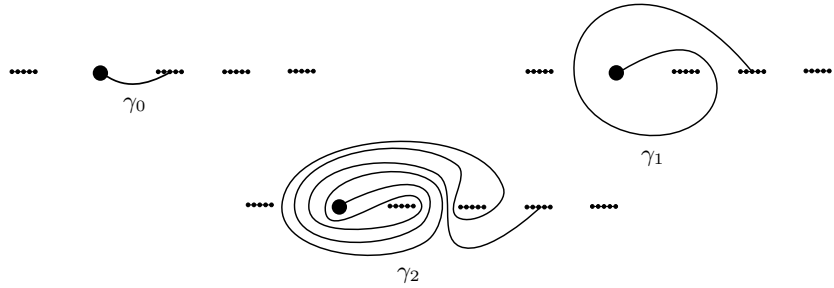}
\vspace{0.5cm}
\caption{Rayons $\gamma_0$, $\gamma_1$ et $\gamma_2$.}
\label{gamma012}
\end{figure}

Plus précisément, on définit la suite de rayons $(\gamma_k)_{k\geq0}$ par récurrence de la manière suivante :
\begin{itemize}
\item $\gamma_0:=\alpha_0$ est la classe d'isotopie du segment $s_0$ ayant pour extrémités $\infty$ et ${p}_0$.
\item Pour tout $k\geq 1$, $k$ impair (\emph{contournement du point d'attachement par la droite}) : pour obtenir $\gamma_{k+1}$, on part de $\infty$, on longe $\gamma_k$ jusqu'à son point d'attachement ${p}_k$ (dans un voisinage tubulaire de $\gamma_k$), on contourne ce point \emph{par la droite} en traversant les segments voisins, \emph{d'abord $s_{k+1}$ puis $s_{k}$}, on longe à nouveau $\gamma_k$ dans un voisinage tubulaire, on contourne $\infty$ en traversant $s_{0}$ puis $s_{-1}$, on longe une dernière fois $\gamma_k$ dans un voisinage tubulaire jusqu'à son point d'attachement et on va s'attacher au point ${p}_{k+1}$ sans traverser l'équateur.
\item Pour tout $k\geq 1$, $k$ pair (\emph{contournement du point d'attachement par la gauche}) : pour obtenir $\gamma_{k+1}$, on part de $\infty$, on longe $\gamma_k$ jusqu'à son point d'attachement ${p}_k$ (dans un voisinage tubulaire de $\gamma_k$), on contourne ce point \emph{par la gauche} en traversant les segments voisins, \emph{d'abord $s_{k}$ puis $s_{k+1}$}, on longe à nouveau $\gamma_k$ dans un voisinage tubulaire, on contourne $\infty$ en traversant $s_{-1}$ puis $s_{0}$, on longe une dernière fois $\gamma_k$ dans un voisinage tubulaire jusqu'à son point d'attachement et on va s'attacher au point ${p}_{k+1}$ sans traverser l'équateur.
\end{itemize}

%figure
\begin{figure}[!h]
\labellist
\small\hair 2pt
\pinlabel $\gamma_{2n-1}$ at -15 100
\pinlabel $\gamma_{2n}$ at -10 20
\pinlabel $K_{2n-1}$ at 336 89
\pinlabel $K_{2n}$ at 371 25
\pinlabel $1$ at 299 83
\pinlabel $3$ at 256 77
\pinlabel $3^{2n-5}$ at 174 77
\pinlabel $3^{2n-4}$ at 132 85
\pinlabel $3^{2n-3}$ at 87 79

\pinlabel $3^{2n-2}$ at 30 75
\pinlabel $1$ at 301 123
\pinlabel $3$ at 253 113
\pinlabel $3^{2n-5}$ at 178 115
\pinlabel $3^{2n-4}$ at 134 124
\pinlabel $3^{2n-3}$ at 91 113
\pinlabel $3^{2n-1}$ at 30 -7
\pinlabel $1$ at 336 -4
\endlabellist
\centering
\vspace{0.4cm}
\includegraphics[scale=0.8]{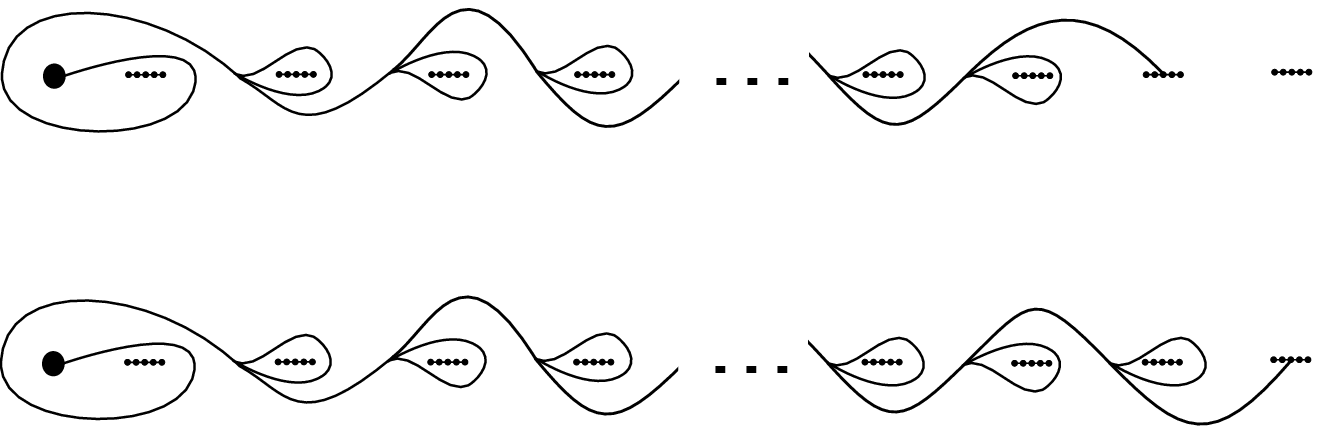}
\vspace{0.4cm}
\caption{Graphes représentant $\gamma_k$ dans le cas impair (en haut) et dans le cas $k$ pair (en bas).}
\label{gammak}
\end{figure}

Vues les similarités de construction entre $(\alpha_k)_k$ et $(\gamma_k)_k$, on peut adapter les mêmes arguments que ceux utilisées dans la section \ref{section1} : en particulier le lemme \ref{lemme distance}, puis son corollaire \ref{dist} et la proposition \ref{alpha_k geod}. On en déduit que le demi-axe $(\gamma_k)_{k\in \N}$ est géodésique.

\paragraph{L'élément $g$ agit par translation sur $(\gamma_k)_k$.} On voit par récurrence en utilisant les graphes représentant les rayons (comme on l'avait fait pour montrer que $h$ est hyperbolique) que $g^n(\gamma_0)=\gamma_{2n}$ pour tout $n\in \N$, voir la figure \ref{diag-g-gamma0} pour le cas $n=0$, la figure \ref{diag-g-gamma2} pour le cas $n=1$ et la figure \ref{diag-g-gammak} pour le cas général.
\end{proof}

%figure

\begin{figure}[!h]
\labellist
\small\hair 2pt
\pinlabel $\gamma_0$ at 101 349
\pinlabel $\gamma_2$ at 97 -9
\pinlabel $h_1$ at 235 328
\pinlabel $h_2$ at 278 154
\pinlabel $g$ at 391 249
\pinlabel $3$ at 397 -8
\pinlabel $1$ at 473 -5
\pinlabel $1$ at 473 39
\endlabellist
\centering
\includegraphics[scale=0.65]{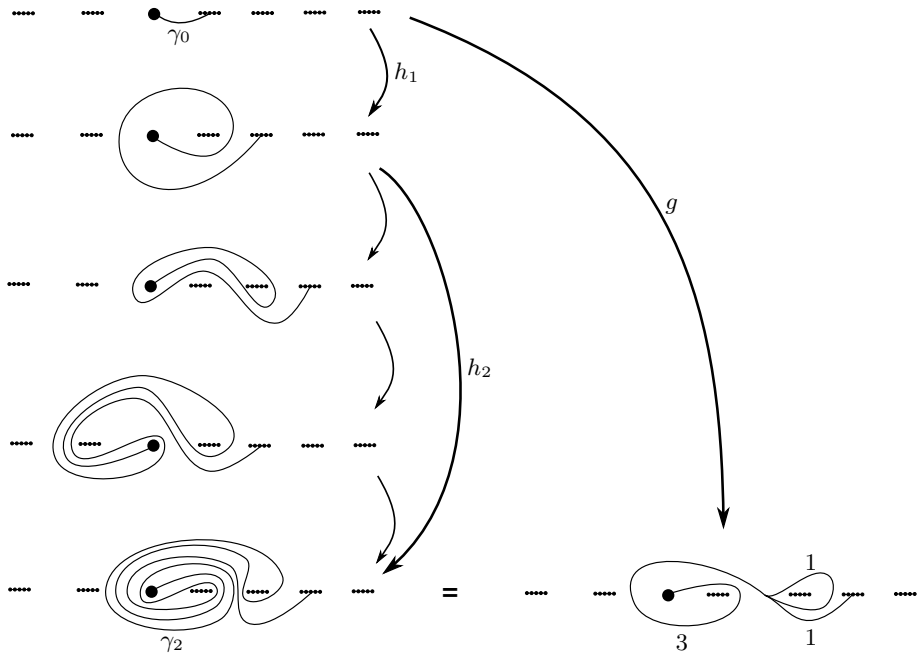}
\vspace{0.1cm}
\caption{Graphe représentant l'image de $\gamma_0$ par $g$, où l'on a décomposé l'action de $h_2$ en trois parties, comme on l'avait fait plus tôt pour $h$. Les deux graphes du bas représentent le même rayon, à savoir $g(\gamma_0)=\gamma_2$.}
\label{diag-g-gamma0}
\end{figure}

%figure

\begin{figure}[!h]
\labellist
\small\hair 2pt
\pinlabel $h_1$ at 385 122
\pinlabel $h_2$ at 385 45
\pinlabel $3$ at 91 128
\pinlabel $1$ at 178 141
\pinlabel $3^2$ at 89 61
\pinlabel $3$ at 180 92
\pinlabel $1$ at 226 76
\pinlabel $3^3$ at 99 -9
\pinlabel $3^2$ at 184 17
\pinlabel $3$ at 230 15
\pinlabel $1$ at 278 9

\pinlabel $\gamma_2$ at -10 152
\pinlabel $\gamma_4$ at -10 16
\endlabellist
\centering
\vspace{0,7cm}
\includegraphics[scale=0.85]{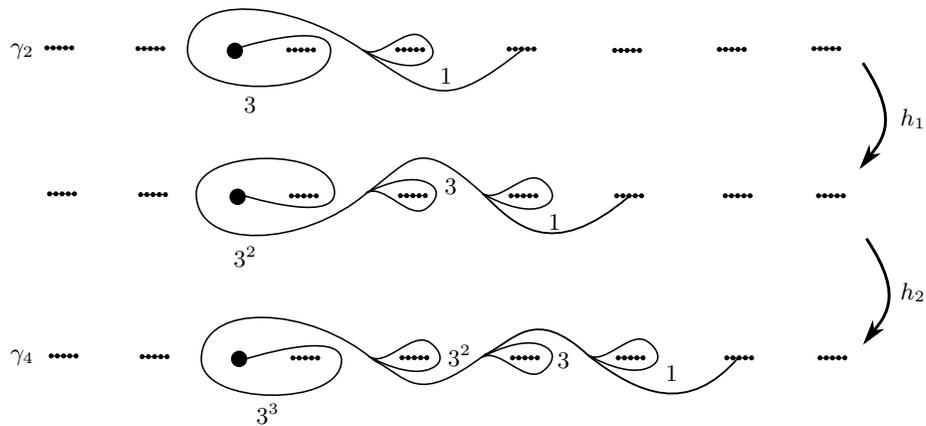}
\vspace{0,5cm}
\caption{Action de $g$ sur $\gamma_2$.}
\label{diag-g-gamma2}
\end{figure}

%figure
\begin{figure}[!h]
\labellist
\small\hair 2pt
\pinlabel $\gamma_{2n}$ at 6 183
\pinlabel $\gamma_{2n+2}$ at 9 13
\pinlabel $3^{2n-1}$ at 58 169
\pinlabel $3^{2n}$ at 58 84
\pinlabel $3^{2n+1}$ at 58 -7
\pinlabel $1$ at 327 165
\pinlabel $1$ at 367 85
\pinlabel $1$ at 415 -9
\pinlabel $K_{2n}$ at 357 203
\pinlabel $K_{2n+1}$ at 398 118
\pinlabel $K_{2n+2}$ at 442 31
\pinlabel $h_1$ at 518 153
\pinlabel $h_2$ at 520 64
\pinlabel $3^{2n-2}$ at 127 174
\pinlabel $3^{2n-1}$ at 125 93
\pinlabel $3^{2n}$ at 127 -2
\pinlabel $3^{2n-2}$ at 178 85
\pinlabel $3^{2n-2}$ at 225 -5
\pinlabel $3^{2n-1}$ at 175 3

\endlabellist
\centering
\includegraphics[scale=0.7]{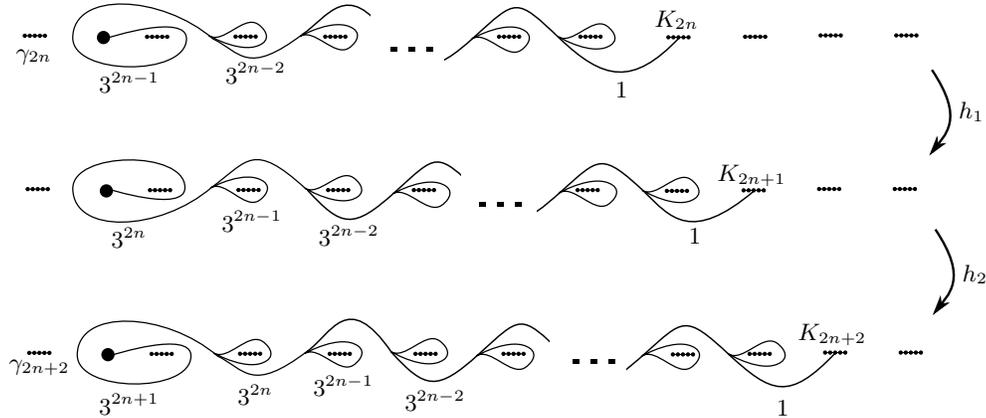}
\vspace{0.1cm}
\caption{Action de $g$ sur $\gamma_{2n}$ : $g(\gamma_{2n}) = \gamma_{2n+2}$.}
\label{diag-g-gammak}
\end{figure}

\bibliographystyle{alpha}
\bibliography{ref}

\end{document}